\documentclass[12pt,leqno]{article}

\usepackage{amsfonts,amsthm,amssymb,amsmath,graphicx}

\newcommand{\fg}{\mathfrak {g}}
\newcommand{\fh}{\mathfrak {h}}

\newcommand{\fl}{\mathfrak {l}}
\newcommand{\fp}{\mathfrak {p}}

\newcommand{\fo}{\mathfrak {o}}
\newcommand{\fs}{\mathfrak {s}}

\newcommand{\gz}{\mathfrak {g}_{\bar{0}}}

\newcommand{\si}{\sigma}

\newcommand{\sk}{\bigskip}

\newcommand{\dy}{{\rm D}}

\newcommand{\al}{\alpha}

\newcommand{\ep}{\varepsilon}

\newcommand{\aut}{{\rm Aut}}
\newcommand{\inv}{{\rm Inv}}
\newcommand{\inn}{{\rm Int}}

\newcommand{\Lz}{L_{\bar{0}}}
\newcommand{\Lo}{L_{\bar{1}}}

\newcommand{\ad}{{\rm ad}}

\newtheorem{proposition}{Proposition}[section]
\newtheorem{theorem}[proposition]{Theorem}
\newtheorem{corollary}[proposition]{Corollary}
\newtheorem{lemma}[proposition]{Lemma}

\newtheorem{example}[proposition]{Example}
\newtheorem{remark}[proposition]{Remark}

\newcommand{\bz}{\mathbb {Z}}
\newcommand{\bc}{\mathbb {C}}

\newcommand{\bn}{\mathbb {N}}
\newcommand{\br}{\mathbb {R}}

\newcommand{\fsp}{{\mathfrak s}{\mathfrak p}}

\newcommand{\id}{{\rm id}}

\usepackage{lscape}
\usepackage{color}
\usepackage{graphicx}

\newcommand{\fgaa}{\mathfrak{g}(1)}
\newcommand{\fgbb}{\mathfrak{g}(i)}
\newcommand{\fgcc}{\mathfrak{g}(-1)}
\newcommand{\fgdd}{\mathfrak{g}(-i)}

 \newcommand{\fgia}{\fgaa_A}
\newcommand{\fgib}{\fgaa_B}
\newcommand{\cU}{\mathcal {U}}

\usepackage{multirow}
 \usepackage{longtable}

 \usepackage[figuresright]{rotating}
 
 \usepackage{float}
 \usepackage{dynkin-diagrams}

     \newcommand{\kapg}{\kappa_{\fg}}

 \newcommand{\kapl}{\kappa_L} 
     
 \newcommand{\Eso}{E_s^I}
  \newcommand{\Est}{E_s^{I\!I}}

      \newcommand{\KKs}{\mathrm{K_s}}

 \usepackage{hyperref}   

\begin{document}

{\large \bf Generalized symmetric pairs, Lie superalgebras,  Dirac operators and Kostant constants }

\sk

Mingjing Zhang

School of Mathematics, 
China University of Mining and Technology,

Xuzhou 221116, China.

zhangmj@cumt.edu.cn

\sk

Fuhai Zhu

School of Mathematics, Nanjing University,

Nanjing 210093, China.

zhufuhai@nju.edu.cn

\sk

\noindent {\bf Abstract}: Lie superalgebras have conventionally been regarded as extensions of Lie algebras.
In this paper, we present a geometric construction of contragredient Lie superalgebras, with the exceptions of  $A(m, n)~(m, n>0)$ and $G(3)$. 

Let $\fg$ be a complex simple Lie algebra equipped with an order 4 automorphism $\sigma$. 
With respect to $\sigma$, there is the eigenspace decomposition $\fg=\fgaa+\fgbb+\fgcc+\fgdd$. 
When $\fgbb$ admits a $\fgaa$-invariant symplectic structure,
$(\fg, \si)$ is called  a \emph{generalized symmetric pair  with symplectic subspace}.
In this paper, we classify all such pairs and construct a natural Lie superalgebra structure on $L=\fgaa+\fgbb$ with $\Lz=\fgaa$ and $\Lo=\fgbb$.

Furthermore, we investigate the constants arising from the squares of two distinct Dirac operators: 
the Dirac operator $D$ for   $\mathfrak{g} = \mathfrak{g}^{\sigma^2} + \mathfrak{g}^{-\sigma^2}$, 
and the Dirac operator $D_s$ for  $L = \fgaa + \fgbb$. 
We prove that the constant $C$ in $D^2$ admits a remarkable factorization
\[
C=J^*(\Omega_{\Lz})\cdot \dim \Lo \cdot h^\vee_{\mathrm{sign}}, 
\] 
where $h^\vee_{\mathrm{sign}}$ is the signed dual Coxeter number of $L$ and $J^*(\Omega_{\Lz})$ is a mapped Casimir constant of its even part. This formula reveals an unexpected unification of symmetric space geometry and Lie superalgebra invariants.

Finally, we prove that the Kostant constant $\KKs = \left.\Omega_{\Lz}\right|_{\Lo}=\frac{1}{2}$ is universal for all contragredient Lie superalgebras with non-degenerate Killing form, 
which implies that the constant $C_s$ in $D_s^2$ satisfies $C_s=\frac{1}{16}\dim \Lo$. We then establish a bijection between such superalgebras and solutions of the superadmissibility index equations $\Est =\frac{1}{2},  \Eso = \frac{1}{2}$,  thereby recovering the classification from an invariant-theoretic perspective.

\sk

\noindent {\bf 2020 Mathematics Subject Classification}: 17B05, 17B20, 17B25, 17B40, 53C35.

\sk

\noindent {\bf Keywords}:  Generalized symmetric pair, Symplectic form, Lie superalgebra,  Dirac operator, Kostant constant

\sk


\newpage

\section{Introduction} \label{intro}
\setcounter{equation}{0}

According to \'{E}. Cartan, a semisimple Riemannian symmetric space is a
Riemannian manifold of the form $G/R$, where $G$ is a real
semisimple Lie group with Cartan involution $\si$ and
fixed point set $G^\si$, such that $\mbox{Lie}(G^\si) = \mbox{Lie}(R)$.
A pseudo-Riemannian symmetric space corresponds to
an involution $\si$ on $G$, and they are classified by Berger \cite{be}.
By ignoring the different covering spaces, this amounts to the pair $(\fg_\br, \fh)$, where $\fg_\br$ is a real semisimple Lie
algebra, and $\fh$  consists of the fixed points of an involution on $\fg_\br$. Such pairs $(\fg_\br, \fh)$ are known
as the (semisimple) locally symmetric pairs.
Furthermore, the symmetric pair is also attached to an ordered
pair of commuting involutions $(\theta, \sigma)$ on the complexification $\fg$ of $\fg_\br$
with $\theta$ a Cartan involution of $\fg_\br$.
Helminck \cite{Helm} classifies such pairs
$(\theta, \si)$, and furthermore distinguishes the inner and outer involutions. Huang \cite{Huang} also
classifies the locally symmetric pairs by square quadruplets. In \cite{ch}, Chuah and Huang use
double Vogan diagrams to provide another combinatorial classification
 of the locally symmetric pairs, or equivalently the commuting involutions on
complex semisimple Lie algebras.

A symplectic symmetric space is a symmetric space which is endowed with a 
symplectic structure invariant by the symmetries \cite{BCG}.
In the case of a semisimple symplectic symmetric space $M$, i.e. one
whose transvection group $G$ is semisimple, the action of $G$ on $M$ is certainly
strongly Hamiltonian (even if $M$ is not simply connected) and therefore such a
space yields an admissible coadjoint orbit of $G$ due to Kostant.
Therefore, symplectic symmetric spaces are interesting in representation theory.
Symplectic  symmetric spaces
are classified for the pseudo-K\"ahler cases \cite{sh}
and non-pseudo-K\"ahler cases \cite{bi}.
They exist if and only if $G^\si$ has a 1-dimensional center.
Algebraically, $(\fg, \si)$ with $\si\in \inv(\fg)$ is a symplectic pair if and only if $\fg^\si$ has a 1-dimensional center.

The study of symplectic symmetric spaces has been generalized in the following directions:
\begin{equation}
\begin{array}{ll}
    (a) & \mbox{symplectic $k$-symmetric spaces,  } \\
    \mbox{(b)}& \mbox{symplectic symmetric spaces for Lie superalgebras.}
  \end{array}
  \label{twod}
  \end{equation}

A homogeneous $k$-symmetric space is a homogeneous space generated by a Lie group automorphism
of order $k$. It is a natural generalization of the concept of a symmetric space. The
geometry and a classification of generalized symmetric spaces were developed by  Fedenko
and  Kowalski \cite{Fe, Ko}. In \cite{BT},    Boche\'nski and  Tralle
give a list of all symplectic 3-symmetric manifolds
with simple groups of transvections. They also show a method of constructing semisimple
(non-compact) symplectic $k$-symmetric spaces from a given (compact) K\"ahler $k$-symmetric
space.

In light of recent developments in supergroups and super symplectic geometry, together with applications in super representation theory  (see for instance \cite{FG, Tu}),
the study of symplectic symmetric space for Lie superalgebras becomes interesting.
The symmetric pairs, which infinitesimally are the characters of the symmetric spaces, are classified in \cite{Ser, CZ} in Lie superalgebra case.

In this paper, we start with the $4$-symmetric spaces of complex simple Lie algebras.
By ignoring the different covering spaces, this amounts to the pair $(\fg, \si)$, where $\fg$ is a complex simple Lie algebra,
and $\si$ is an order 4 automorphism of $\fg$.  That $\sigma\in \aut(\fg)$ has order $n$ means $\sigma^n=\id$ and $\sigma^{n-1}\neq \id$.
In the study of symplectic $4$-symmetric spaces (\ref{twod})(a), we have the decomposition $\fg=\fh\oplus \mathfrak{m}$ with $\fh=\fg^\si$,
and then consider the symplectic form on $\mathfrak{m}$.
Here we consider the  decomposition of $\fg$ into the sum of eigenspaces with respect to $\si$.
Denote this decomposition as \[
\fg=\fgaa+\fgbb+\fgcc+\fgdd,\]
where $\fg(a)$ is the eigenspace of $\si$ with eigenvalue $a\in \{\pm 1, \pm i\}$.
We consider the $\fgaa$-invariant symplectic form on $\fgbb$.
Note that $\fgaa+\fgcc=\fg^{\si^2}$ is the invariant subalgebra of the involution $\si^2$.
Then $\fgaa+\fgcc$ is naturally reductive. We find that the center of $\fgaa+\fgcc$ determines the  $\fgaa$-invariant symplectic form on $\fgbb$.

\begin{theorem}
Let $\fg=\fgaa+\fgbb+\fgcc+\fgdd$ be the decomposition with respect to the order 4 automorphism $\si$. Then $\fgbb$ admits a non-degenerate $\fgaa$-invariant symplectic form if and only if
$\fgaa+\fgcc$ has a 1-dimensional center  in $\fgcc$.
\label{main1}
\end{theorem}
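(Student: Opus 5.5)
The plan is to relate $\fgaa$-invariant bilinear forms on $\fgbb$ to $\fgaa$-equivariant maps, using the Killing form $B$ of $\fg$. Since $B$ is $\si$-invariant, $B(\fg(a),\fg(b))=0$ unless $ab=1$. Hence $B$ pairs $\fgbb$ nondegenerately with $\fgdd$ and pairs $\fgcc$ with itself, and $\fgbb\cong\fgdd^*$ as $\fgaa$-modules. Any $\fgaa$-invariant bilinear form $\omega$ on $\fgbb$ can then be written as $\omega(x,y)=B(x,Ty)$ for a unique $\fgaa$-equivariant linear map $T:\fgbb\to\fgdd$. The theorem reduces to describing the skew-symmetric such $T$, and the goal is to show that they come exactly from the center of $\fg^{\si^2}=\fgaa+\fgcc$ lying in $\fgcc$.

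\textbf{The direction ``$\Leftarrow$''.} Let $z$ span a $1$-dimensional center of $\fgaa+\fgcc$ with $z\in\fgcc$. Then $\ad z$ maps $\fgbb$ to $\fgdd$, and it commutes with $\ad\fgaa$ because $[z,\fgaa]=0$. Define $\omega(x,y)=B(x,[z,y])=B([x,y],z)$. This form is skew, since $[x,y]=-[y,x]$, and $\fgaa$-invariant. For nondegeneracy, note that $\fg^{\si^2}$ is the fixed algebra of an involution, so $\fg^{-\si^2}=\fgbb+\fgdd$. The centralizer of a nonzero central element of $\fg^{\si^2}$ in $\fg$ is exactly $\fg^{\si^2}$. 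This is the standard fact for symmetric pairs with center: $\ad z$ acts on $\fg^{-\si^2}$ with eigenvalues $\pm c\neq0$, because the center is nontrivial only in the Hermitian case, where $\fg^{-\si^2}=\fp^+\oplus\fp^-$ and $z$ acts by $\pm c$. Therefore $\ad z:\fgbb\to\fgdd$ is injective, and since $\dim\fgbb=\dim\fgdd$ it is an isomorphism, so $\omega$ is nondegenerate.

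\textbf{The direction ``$\Rightarrow$''.} Suppose $\omega$ is nondegenerate, skew and $\fgaa$-invariant, corresponding to an isomorphism $T:\fgbb\to\fgdd$. Extend $T$ to $\tilde T$ on $\fg^{-\si^2}$ by setting $\tilde T=T$ on $\fgbb$ and $\tilde T=-T^{*}$ on $\fgdd$, where $T^*:\fgdd\to\fgbb$ is the $B$-adjoint of $T$. Skew-symmetry makes $\tilde T$ skew with respect to $B$ on $\fg^{-\si^2}$. I then want to show that $\tilde T$ is $\fg^{\si^2}$-equivariant, not just $\fgaa$-equivariant; this is the \emph{main obstacle}, since a priori nothing controls $\fgcc$. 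My first attempt is to use irreducibility/multiplicity arguments. If $\fg^{-\si^2}$ is irreducible under $\fg^{\si^2}$, then $\fgbb$ and $\fgdd$ are generated from each other by $\ad\fgcc$, and Schur-type arguments for $\fgaa$-modules restrict which $T$ are possible. Failing that, I would run a case check through the classification of order-$4$ automorphisms via Kac diagrams, which the paper later uses anyway. In every case where $\fg^{\si^2}$ is semisimple, I would check that $\fgbb$ carries only symmetric $\fgaa$-invariant forms. In the Hermitian cases with center in $\fgaa$, I would check that $z$ acts on $\fgbb$ by a scalar, which would make $\fgbb$ Lagrangian-incompatible: weight considerations force $\fgbb\subset\fp^+$ or a self-dual splitting that is incompatible with skewness. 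Once $\fg^{\si^2}$-equivariance is established, $\tilde T$ becomes a skew $\fg^{\si^2}$-invariant endomorphism of $\fg^{-\si^2}$. Such an endomorphism exists nondegenerately only when the isotropy representation is reducible into $\fp^\pm$, that is, only when the center is nontrivial. Finally, since $\tilde T$ anticommutes with $\si$ (it swaps $\fgbb$ and $\fgdd$), it is given by $\ad z$ with $z\in\fgcc$.
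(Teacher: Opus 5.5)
Your ``$\Leftarrow$'' direction is correct and is essentially the paper's argument. The paper uses that $\kapg(z,[\cdot,\cdot])$ is nondegenerate on $\fg^{-\si^2}$ and vanishes on $\fgbb\times\fgdd$ when $\si z=-z$; you use that $\ad z$ is injective on $\fg^{-\si^2}$. The only slip is a harmless sign: $B(x,[z,y])=-B([x,y],z)$.

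The ``$\Rightarrow$'' direction, however, has a genuine gap exactly at the step you call the main obstacle. Nothing in the proposal shows that $T$ extends to a $\fg^{\si^2}$-equivariant skew operator on $\fgbb+\fgdd$. Everything after it (Hermitian type, $\tilde T\in\bc\,\ad z$, $\si z=-z$) depends on that extension.

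The extension is not even well defined as written. Because $B$ vanishes on $\fgbb\times\fgbb$ and on $\fgdd\times\fgdd$, a $B$-adjoint of $T\colon\fgbb\to\fgdd$ is again a map $\fgbb\to\fgdd$ (skewness says it is $-T$), not a map $\fgdd\to\fgbb$. The natural candidates are $\tilde T_\lambda=T\oplus\lambda T^{-1}$ with $\lambda\in\bc^\times$. Every one of them is $B$-skew and $\fgaa$-equivariant. Commuting with $\ad y$ for $y\in\fgcc$, by contrast, requires $\lambda T^{-1}[y,u]=[y,Tu]$ for $u\in\fgbb$, which singles out $\lambda$ as soon as $[\fgcc,\fgbb]\neq0$. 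So $\fgcc$-equivariance cannot follow from formal properties; it needs real information about $\fgcc$.

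This is precisely what Lemma \ref{lem1} is meant to supply: its extension via $f$ is your $\tilde T_\lambda$ for one fixed $\lambda$. After that, Lemma \ref{lem2} and the $\si$-parity argument correspond to the rest of your outline. Be aware, though, that the paper's verification of $\fgcc$-invariance there is not sound as written. The last equality in (\ref{inv2}) is the very invariance being proved. Moreover, the claimed identity $f([y,w])=-[y,f^{-1}(w)]$ cannot hold for every normalization of the given form: rescaling the form by $t$ multiplies the left side by $t$ and the right side by $t^{-1}$, while the hypothesis is scale invariant.

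Your fallback, a case check, is the right way to close the gap, but it is not carried out, and your sketch for the Hermitian rows with $z\in\fgaa$ is wrong. For example, take $\fg=A_{2n-1}$ with $\fgaa\cong A_{n-1}\oplus\bc$, realized by $\si(X)=-AX^{T}A^{-1}$ with $A=\bigl(\begin{smallmatrix}0&I_n\\ -iI_n&0\end{smallmatrix}\bigr)$. Then $\fgaa=\{\mathrm{diag}(P,-P^{T})\}\cong\mathfrak{gl}(V)$ with $V=\bc^n$, and $\fgbb\cong\Lambda^2V\oplus S^2V^*$. On this space $z=\mathrm{diag}(I_n,-I_n)$ acts by $+2$ and $-2$, not by a scalar.

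Nor is skewness ever the obstruction. When $z\in\fgaa$ one has $\fgbb=(\fgbb\cap\fp^+)\oplus(\fgbb\cap\fp^-)$, and any invariant form must pair the two summands. Whenever they are dual via an invariant pairing $\beta$, the form $(a+b,a'+b')\mapsto\beta(a,b')-\beta(a',b)$ (with $a,a'\in\fgbb\cap\fp^+$, $b,b'\in\fgbb\cap\fp^-$) is nondegenerate, skew and invariant.

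What must actually be verified, row by row in Table \ref{table2}, is the following:
\begin{itemize}
\item in the rows with $z\in\fgaa$, that $\fgbb\cap\fp^{\pm}$ are not dual $\fgaa$-modules (in the example, $S^2V^*\not\cong\Lambda^2V^*$ for $n\ge2$);
\item in the rows with $\dim Z=0$, that $\fgbb$ carries no nondegenerate invariant form at all;
\item for inner $\si$, which must also be treated, it suffices that $\fgaa$ contains a Cartan subalgebra of $\fg$ and that $\fg_\alpha\subset\fgbb$ forces $\fg_{-\alpha}\subset\fgdd$, so every $\fgaa$-invariant form on $\fgbb$ vanishes.
\end{itemize}
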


For the pair $(\fg, \si)$, Theorem \ref{main1} provides a necessary and sufficient condition for $\fgbb$ to have a $\fgaa$-invariant symplectic form.
This can be regarded as an extension of symplectic symmetric pair.
We refer to it as a   \emph{generalized symmetric pair  with symplectic subspace}.

\begin{theorem}
All the pairs $(\fg, \sigma)$ satisfying Theorem \ref{main1} are listed in Table \ref{table1}.
\label{main2}

 \begin{center}
\begin{table}[ht]\renewcommand{\arraystretch}{2.2}
\centering
\begin{tabular}{|c|c|c|c|c| }
  \hline
& $\fg$ &  $\si$ & $\fgaa$ & $\fgaa+\fgcc$   \\
  \hline
(a) &{ $ A_{2n-1} $} &\includegraphics[width=3.5cm]{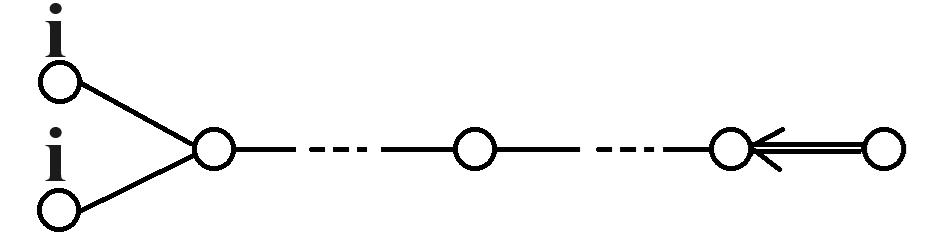}  &  {$ C_{n-1}\oplus \bc
  $} & {$A_{2n-3}\oplus A_1\oplus \bc$ }  \\
 \hline
(b)&{ $ A_{2n-1} $} &\includegraphics[width=3.5cm]{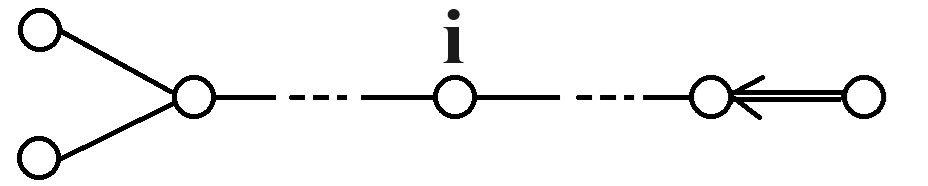}  &  { $ D_p\oplus C_{q}   $} & {$A_{2p-1}\oplus A_{2q-1}\oplus \bc$ }\\
 \hline
(c) &{ $ A_{2n} $}  &\includegraphics[width=3.5cm]{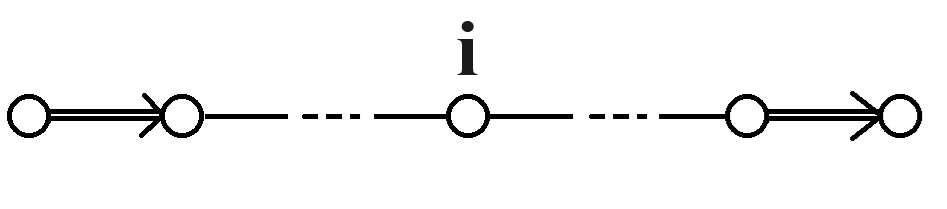}  &   { $ C_p\oplus B_{q}  $} & {$A_{2p-1}\oplus A_{2q}\oplus \bc$ }\\
 \hline
 (d)& { $ A_{2n} $}  &\includegraphics[width=3.5cm]{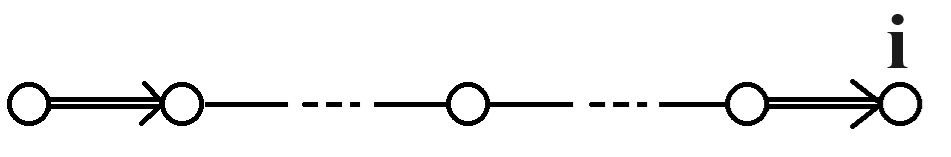}  &  { $ C_n
 $} & {$A_{2n-1} \oplus \bc$ } \\
  \hline
(e)& {$ D_{n}$}       &\includegraphics[width=3.5cm]{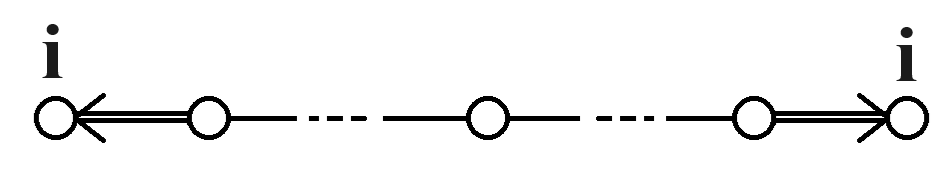}  &  { $  A_{n-2}\oplus  \bc  $} &   $  D_{n-1}\oplus \bc$  \\
  \hline
(f)  &  { $ E_6  $}    &\includegraphics[width=2.6cm]{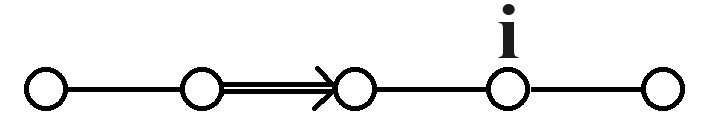}  &  { $B_{3}\oplus A_{1} $} &   $ D_{5}\oplus \bc$   \\
  \hline
\end{tabular}
\caption{Generalized symmetric pairs  with symplectic subspace}
\label{table1}
\end{table}
\end{center}
\end{theorem}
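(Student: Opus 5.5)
By Theorem \ref{main1}, the classification reduces to listing all order $4$ automorphisms $\si$ of a complex simple $\fg$ (up to conjugacy) such that the reductive subalgebra $\fg^{\si^2}=\fgaa+\fgcc$ has a $1$-dimensional center, and that center lies in $\fgcc$. I would carry this out in three steps.

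\textbf{Step 1: restrict $\si^2$.} The involution $\theta=\si^2$ must have $\fg^\theta$ with a $1$-dimensional center. By the classical theory of involutions this forces $\theta$ inner with $\fg^\theta$ a Levi subalgebra of Hermitian type. So $(\fg,\fg^\theta)$ runs through the Hermitian symmetric pairs:
\begin{itemize}
\item $A_{m}$ with $\fg^\theta = A_{p-1}\oplus A_{m-p}\oplus\bc$;
\item $B_n$ with $\fg^\theta = B_{n-1}\oplus\bc$;
\item $C_n$ with $\fg^\theta = A_{n-1}\oplus\bc$;
\item $D_n$ with $\fg^\theta = D_{n-1}\oplus\bc$ or $A_{n-1}\oplus\bc$;
\item $E_6$ with $\fg^\theta = D_5\oplus\bc$;
\item $E_7$ with $\fg^\theta = E_6\oplus\bc$.
\end{itemize}

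\textbf{Step 2: restrict $\si$ itself.} Since $\si$ commutes with $\theta$, it preserves $\fg^\theta$ and acts on it as an involution, with fixed points $\fgaa$. The center $\fs$ of $\fg^\theta$ is $\si$-stable and one-dimensional, so $\si$ acts on it by $\pm1$. The condition that the center lies in $\fgcc$ means $\si|_{\fs}=-1$.

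For $\si$ to act by $-1$ on $\fs$, it must interchange the two $\fg^\theta$-modules $\fg_{\pm}$ in $\fg(-1)_\theta=\fgbb+\fgdd$, on which $\fs$ acts by $\pm c$. Equivalently, $\si$ restricted to the semisimple part $[\fg^\theta,\fg^\theta]$ is an involution whose extension swaps the highest and lowest weights of $\fg_\pm\cong\fg_\mp^*$.

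In Kac-coordinate terms, I would write $\si$ via its Kac diagram (inner or outer, of order $4$). I would then check, using the known criterion for $\fg^{\si^2}$ from a Kac diagram, which diagrams have $\si^2$ of Hermitian type and $\si$ acting by $-1$ on the center. This last condition typically forces $\si$ to be outer, or to act by an outer automorphism on $\fg^\theta$ which flips the center.

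\textbf{Step 3: case analysis.}
\begin{itemize}
\item \emph{Type $A$.} The order $4$ outer automorphisms with $\si^2$ inner of type $S(GL_p\times GL_{m+1-p})$ are $X\mapsto -J\,{}^tX J^{-1}$. With $J$ symmetric or skew, and $\si^2=\mathrm{Ad}$ of a diagonal $\pm1$ matrix, this yields cases (a)–(d), according to the parity of $m+1$ and of the blocks. Here $\fgaa$ is an orthogonal$\oplus$symplectic pair, or a symplectic algebra with one-dimensional center in the degenerate block case.
\item \emph{Type $D$ with $\fg^\theta=D_{n-1}\oplus\bc$.} $\si$ acts on $\fso(2n-2)\oplus\fso(2)$ as an involution negating $\fso(2)$. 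This gives case (e), after checking which $\fgaa$ survive. Here I would compute directly via $\fso(2n)$ matrices.
\item \emph{$E_6$.} The outer automorphism restricted to $D_5\oplus\bc$ negates the center and acts on $D_5$ with fixed points $B_3\oplus A_1$ (that is, $\fso(7)\oplus\fso(3)$), giving (f).
\item \emph{Exclusions.} For $B_n$, $C_n$, $E_7$, and for $D_n$ with $A_{n-1}\oplus\bc$, I would show no such $\si$ exists. The Dynkin diagram of $\fg$ has no automorphism (or the relevant one is inner) reversing the cominuscule grading. An inner $\si$ would lie in a torus centralizing $\fs$, so it would fix $\fs$, a contradiction. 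For $D_n$ with $A_{n-1}\oplus\bc$, the outer automorphism does not preserve the grading unless $n=4$, where triality reduces to the $D_{n-1}\oplus\bc$ case.
\end{itemize}

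\textbf{Expected main obstacle.} The delicate part is Step 3: ensuring completeness and non-redundancy up to conjugacy. This includes eliminating inner $\si$ carefully, and in type $A$ and $D$ tracking exactly which parameter choices give order exactly $4$ rather than $2$. I would handle it by working with Kac diagrams of order $4$ systematically and cross-checking with explicit matrix models.
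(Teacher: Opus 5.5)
\textbf{Verdict: your route is correct and differs from the paper's; three local steps need fixing.}

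\textbf{Comparison with the paper.} Your argument excluding inner $\si$ is the same as the paper's use of Proposition~\ref{prop31}. An inner $\si$ lies in a maximal torus whose Lie algebra is a Cartan subalgebra of $\fg^{\si^2}$. That Cartan subalgebra contains $Z(\fg^{\si^2})$ and is fixed pointwise by $\si$, so the center lies in $\fgaa$. After that step the two routes diverge.

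The paper enumerates \emph{all} order-$4$ outer automorphisms of $A_{2n-1}$, $A_{2n}$, $D_n$ and $E_6$ from Kac diagrams. It computes $\fg^{\si^2}$ for each one from the Vogan diagram of $\si^2$ (Table~\ref{table2}) and keeps the rows whose center lies in $\fgcc$. Completeness is inherited from Kac's theorem, at the price of computing $\fg^{\si^2}$ for rows that are later discarded.

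You instead filter first by $\si^2$: $\dim Z(\fg^{\si^2})=1$ forces a Hermitian involution $\theta$. You then look for involutions of $\fg^\theta$ that negate the center and extend to some $\si$ with $\si^2=\theta$. This explains why $B_n$, $C_n$, $E_7$ drop out (they have only inner automorphisms), and it produces matrix models that the paper needs anyway in Section 4. The cost is that completeness must be argued case by case, and that is where your sketch slips.

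\textbf{(i) Type $A$.} If $\si(X)=-JX^tJ^{-1}$, then $\si^2=\mathrm{Ad}(JJ^{-t})$. So a symmetric or skew $J$ gives $\si^2=\id$, not order $4$. The correct condition is $JJ^{-t}=c\,\mathrm{diag}(I_p,-I_q)$.
\begin{itemize}
\item For $c=\pm1$, this forces $J=J_1\oplus J_2$ with one block symmetric and the other skew. Then $\fgaa=\mathfrak{so}(p)\oplus\mathfrak{sp}(q)$, which gives cases (a)--(d).
\item For $c=\pm i$, it forces $p=q$ and $J$ block off-diagonal. Then $\fgaa\cong\mathfrak{gl}(n)$ and the center of $\fg^{\si^2}$ lies in $\fgaa$. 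This is the third $A_{2n-1}$ row of Table~\ref{table2}, and you must exclude it explicitly.
\end{itemize}

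\textbf{(ii) $D_n$ with $\fg^\theta\cong A_{n-1}\oplus\bc$.} ``The outer automorphism does not preserve the grading'' is not a valid reason. For odd $n$ and $h=\left(\begin{smallmatrix}0&-I\\ I&0\end{smallmatrix}\right)$, the map $\mathrm{Ad}\,\mathrm{diag}(I_n,-I_n)$ is outer and sends $h\mapsto -h$, so it interchanges your $\fg_\pm$. It simply squares to $\id$.

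The correct argument is as follows. Suppose $\si=\mathrm{Ad}(g)$ with $g\in O(2n)$, $g^2=\pm h$ and $h^2=-I$. Every eigenvalue of $g$ satisfies $\lambda^4=-1$, so $\lambda\neq\lambda^{-1}$. Since $g$ is orthogonal, the eigenvalues pair off as $\lambda,\lambda^{-1}$ with equal multiplicities. Hence $\det g=1$, so $\si$ is inner and is excluded. For $n=4$, conjugate by triality so that $\si$ comes from $O(8)$, and the same argument applies.

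\textbf{(iii) $E_6$.} You only state the answer. Here is one way to prove it.
\begin{itemize}
\item $\si|_{D_5}$ is an involution that must swap the two half-spin modules $\fg_{\pm}$. So it is outer, with fixed algebra $\mathfrak{so}(9)$, $\mathfrak{so}(7)\oplus\mathfrak{so}(3)$ or $\mathfrak{so}(5)\oplus\mathfrak{so}(5)$.
\item Automorphisms trivial on $\fg^\theta$ are of the form $e^{t\,\ad z}$. Since $\si z=-z$, every extension $\si e^{t\,\ad z}$ has the same square $\si^2$.
\item It remains to check, for instance via Kac's list of the three order-$4$ outer automorphisms of $E_6$, that only the $\mathfrak{so}(7)\oplus\mathfrak{so}(3)$ case has square $\theta\neq\id$.
\end{itemize}
With these fixes your route proves the statement.
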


In Table \ref{table1}, $p+q=n$, and $\bc$ denotes the 1-dimensional center of the associated Lie subalgebra.
For example, in Table \ref{table1}(a), we have $\fgaa\cong C_{n-1}\oplus \bc$ and  $\fgaa+\fgcc\cong A_{2n-3}\oplus A_1\oplus \bc$.
The first $\bc$ denotes the 1-dimensional center of $\fgaa$, and meanwhile the second  $\bc$ denotes the 1-dimensional center of $\fgaa+\fgcc$.
By Theorem \ref{main1},  $\fgaa+\fgcc$ has  1-dimensional center in $\fgcc$, 
so the second $\bc$ lies in $\fgcc$. The two $\bc$'s carry different meanings.

Let $\aut(\fg)$ denote the group of automorphisms of $\fg$, and let $\inn(\fg)$ be its identity component.
An element in $\inn(\fg)$ is called inner, and otherwise it is called outer.
By Theorem \ref{main2}, we obtain the following straightforward corollary.
\begin{corollary}
 If $\sigma$ satisfies Theorem \ref{main1}, then $\sigma$ is outer.
\end{corollary}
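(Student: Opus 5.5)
We need to prove the corollary: if $\sigma$ satisfies Theorem \ref{main1}, then $\sigma$ is outer. The plan is to argue by contradiction, or equivalently to read the answer off Table \ref{table1} by a rank criterion.

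The criterion is the classical fact that an inner automorphism of finite order fixes a Cartan subalgebra pointwise. Indeed, if $\si=\exp(\ad X)$ with $X$ semisimple of finite order, then $\si$ fixes a Cartan subalgebra $\fh\ni X$. Consequently, $\si$ is inner if and only if $\mathrm{rank}\,\fgaa=\mathrm{rank}\,\fg$, where $\fgaa=\fg^\si$.

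I will then check this rank condition row by row in Table \ref{table1} from Theorem \ref{main2}:
\begin{itemize}
\item[(a)] $\mathrm{rank}(C_{n-1}\oplus\bc)=n<2n-1$ for $n\ge2$.
\item[(b)] $\mathrm{rank}(D_p\oplus C_q)=p+q=n<2n-1$, since $n\ge2$ is forced when $p,q\ge1$.
\item[(c)] $\mathrm{rank}(C_p\oplus B_q)=n<2n$.
\item[(d)] $\mathrm{rank}\,C_n=n<2n$.
\item[(e)] $\mathrm{rank}(A_{n-2}\oplus\bc)=n-1<n$.
\item[(f)] $\mathrm{rank}(B_3\oplus A_1)=4<6$.
\end{itemize}
In every row the rank drops, so $\si$ is not inner.

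There is also a more conceptual route, which avoids the table and which I would give as a remark. Suppose $\si$ is inner. Then $\si$ fixes a Cartan subalgebra $\fh\subset\fgaa$. The center of $\fgaa+\fgcc=\fg^{\si^2}$ commutes with $\fh$, so it lies in $\fh$. Hence the center lies in $\fgaa$, which contradicts Theorem \ref{main1}, where the center is required to lie in $\fgcc$.

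The main obstacle is justifying that the center lies in $\fh$. The center is contained in the centralizer of $\fh$ inside $\fg^{\si^2}$. Since $\fh$ is also a Cartan subalgebra of $\fg^{\si^2}$, that centralizer is $\fh$ itself. This step is standard, so I expect no real difficulty, and I would present the conceptual argument as the proof, with the table check as confirmation.
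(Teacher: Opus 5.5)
Your proposal is correct and is essentially the paper's argument: the proof of Theorem \ref{main2} begins with exactly your conceptual step. There, an inner $\sigma$ of finite order fixes a Cartan subalgebra $\fh\subset\fgaa$, which forces the center of $\fgaa+\fgcc$ into $\fh\subset\fgaa$ rather than $\fgcc$; your row-by-row rank check simply confirms the paper's remark that the corollary can be read off Table \ref{table1}.
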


With the classification in hand, we turn to the construction of Lie superalgebras. 
Let $L=\Lz+\Lo$ be a complex Lie superalgebra. If there exists a non-degenerate invariant bilinear form $B_L$ on $L$, which is symmetric on $\Lz$  and skew-symmetric on $\Lo$,
then $L$ is said to be of Riemannian type.
The simple Lie superalgebras of Riemannian type are referred to basic classical Lie superalgebras by V. Kac.
Besides ordinary simple Lie algebras, they are listed as $A(m,n)$, $B(m,n)$, $C(n)$, $D(m,n)$, $D(2,1;\alpha)~ (\alpha\neq 0, -1)$, $F(4)$ and $G(3)$ (\cite[\S 2.5]{ka77}).

Returning to the pair $(\fg, \si)$ with $\fg$ complex simple and $\si$ an order 4 automorphism of $\fg$, we have classified the generalized symmetric pairs  with symplectic subspace.
Moreover, since $\fg$ is simple, the Killing form of $\fg$ restricts to a non-singular symmetric bilinear form on $\fgaa$.
By comparing with the classification of basic classical Lie superalgebras, we observe that for each generalized symmetric pair with symplectic subspace $(\fg, \si)$
 in Table \ref{table1}, there exists a Lie superalgebra structure on $\fgaa+\fgbb$.
A natural question is whether we can construct the Lie superalgebra bracket from the symplectic structure on $\fgbb$ together with the Lie algebra structure of $\fg$.

Let $z\in \fgcc$ be a nonzero element in the center of $\fgaa+\fgcc$. There exists a  symmetric bilinear map:
\[
\fgbb\times \fgbb\longrightarrow \fgaa; ~~  (x, y)\longmapsto [[z, x], y].
\]
A natural question is whether this map yields the desired  Lie superalgebra bracket on $\fgaa+\fgbb$. 
When $\fg=A_{2n}$ and $\si$ is as in Table \ref{table1}(d) with $\fgaa\cong C_n$, this is indeed the case.
In general, however, this symmetric bilinear map does not induce a Lie  superalgebra structure on $\fgaa+\fgbb$. See Remark \ref{rem}.

We now define a map $J$ on $\fgaa$ as follows.
\begin{itemize}
\item \underline{Case 1: $(\fg, \si)$ in Table \ref{table1}(b) with $\fg=A_5$.}

In this case, $\fgaa=A_1^3=\sum_{j=1}^3 \bc\{X_j, H_j, Y_j\}$, and define
\[
J=\left\{ \begin{array}{ll}
2 \alpha \cdot \id, &  \mbox{ on $A_1=\bc\{X_1, H_1, Y_1\}$}, \\
  \id, &  \mbox{ on $A_1=\bc\{X_2, H_2, Y_2\}$},\\
-2(1+\alpha) \cdot   \id, &  \mbox{ on $A_1=\bc\{X_3, H_3, Y_3\}$},  \end{array}
\right.
\]
with $\alpha\neq 0, -1$.
\item \underline{Case 2: $(\fg, \si)$ in Table \ref{table1}(e).}

In this case, $\fgaa=A_{n-2}+\bc$, and define
\[
J=\left\{\begin{array}{ll}
\id, & \mbox{ on }A_{n-2}, \\
-(n-2)\cdot \id, & \mbox{ on } \bc. \end{array}\right.
\]

\item \underline{Case 3: $(\fg, \si)$ in Table \ref{table1}(f).}

In this case, $\fgaa=B_3+A_1$, and define
\[
J=\left\{\begin{array}{ll}
\id, & \mbox{ on } B_3, \\
-3\cdot  \id, & \mbox{ on } A_1. \end{array}\right.
\]

\item  \underline{Case 4: all remaining cases in Table \ref{table1}.}

Here  $\fgaa=\fgia+ \fgib$ is a direct sum of two ideals.
For $(\fg, \si)$ in Table \ref{table1}(a), $\fgia$ is simple and $\fgib\cong \bc$ is 1-dimensional center of $\fgaa$.
For $(\fg, \si)$ in Table \ref{table1}(d), $\fgaa$ is simple and  $\fgib$  is regarded as redundant.
For all other cases, both $\fgia$ and $\fgib$ are simple.
Define
\[
J=\left\{ \begin{array}{ll}
\id & \mbox{ on } \fgia, \\
-\id & \mbox{ on } \fgib.
\end{array}\right.
\]
\end{itemize}

We state the following result.
 \begin{theorem}
Let $(\fg, \si)$ be a generalized symmetric pair with symplectic subspace, which is listed in Table \ref{table1}.
Then with respect to $\si$, we have the eigenspace decomposition
\[
\fg=\fgaa+\fgbb+\fgcc+\fgdd.
\]
Suppose $z\in \fgcc$ to be a nonzero element in the center of  $\fgaa+\fgcc$.
Let $L=\fgaa+\fgbb$ and let $J$ be defined as above.
Define a symmetric bilinear map $[\cdot, \cdot]_s: L \times L \rightarrow L$ by
\begin{equation}
[x, y]_s=\left\{ \begin{array}{ll}
[x, y], & \mbox{if $x\in \fgaa$ or $y\in \fgaa$}; \\
J([[z, x], y]), &\mbox{if $x, y\in \fgbb$}. \end{array}\right. 
\label{superbracket}
\end{equation}
Then this map defines a   Lie superalgebra structure on $L=\fgaa+\fgbb$ with  $\Lz=\fgaa$ and $\Lo=\fgbb$.

Moreover, the Lie superalgebras obtained via this method include
\[
 B(m,n), C(n), D(m,n), A(m,0), B(0, n), F(4),  D(2,1;\alpha) ~(\alpha \neq 0, -1).
\]
\label{main}
\end{theorem}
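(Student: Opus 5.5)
We first check that $[\cdot,\cdot]_s$ is well defined and super-symmetric. Since $\si(z)=-z$ and $\si$ acts by $i$ on $\fgbb$, the element $[[z,x],y]$ for $x,y\in\fgbb$ has eigenvalue $(-1)(i)(i)=1$, so it lies in $\fgaa$. For symmetry, note $[[z,x],y]-[[z,y],x]=[z,[x,y]]$, and $[x,y]\in\fg(-1)$, which is centralized by $z$ because $z$ is central in $\fgaa+\fgcc$. Hence the odd–odd bracket is symmetric. The even–even and even–odd brackets are just restrictions of the bracket of $\fg$, so the only points needing proof are:
\begin{itemize}
\item[(i)] $\fgaa$-equivariance of the odd–odd bracket, i.e.\ $[a,[x,y]_s]=[[a,x],y]_s+[x,[a,y]]_s$ for $a\in\fgaa$;
\item[(ii)] the odd Jacobi identity $[x,[x,x]_s]_s=0$ for all $x\in\fgbb$ (which polarizes to the full $\Lo^3$ Jacobi identity).
\end{itemize}
The remaining Jacobi identities ($\Lz^3$, and $\Lz\Lz\Lo$) are inherited from $\fg$.

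For (i), the map $(x,y)\mapsto[[z,x],y]$ is $\fgaa$-equivariant, since $\ad a$ is a derivation killing $z$. Moreover $J$ is a scalar on each simple or central ideal of $\fgaa$, so $J$ commutes with $\ad\fgaa$ on $\fgaa$. Therefore $J\circ(\text{equivariant map})$ is still equivariant.

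For (ii), we reformulate via invariant forms. Let $\omega(x,y)=\kappa_\fg(z,[x,y])$ be the symplectic form from Theorem \ref{main1} on $\fgbb$, and write $\fgaa=\bigoplus_j\fl_j$ as a sum of ideals with $J|_{\fl_j}=c_j$. Then $[x,x]_s=\sum_j c_j\,\pi_j([[z,x],x])$, where $\pi_j$ is the projection to $\fl_j$. Using the Killing-form duality $\kappa_\fg(a,[[z,x],y])=\omega([a,x],y)$ up to sign, we get
\[
[x,[x,x]_s]=\sum_j c_j\sum_k [\pi_j\text{-component}].
\]
This is expressed through the moment maps $\mu_j:\fgbb\to\fl_j^*$. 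The Jacobi condition becomes the identity $\sum_j c_j\,\rho_j(\mu_j(x))x=0$, where $\rho_j(\mu_j(x))$ denotes the Casimir-type operator built from $\fl_j$ acting on $x$. Equivalently, $\sum_j c_j\,C_j(x,x,x,x)=0$ for the quartic invariants $C_j(x)=\kappa_j(\mu_j(x),\mu_j(x))$, where $\kappa_j$ is the trace form on $\fl_j$ dual to $\kappa_\fg|_{\fl_j}$ rescaled by $J$.

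The main obstacle is verifying this identity, and we would do it case by case using Theorem \ref{main2}. In each case of Table \ref{table1}, $\fgbb$ as an $\fgaa$-module is explicit:
\begin{itemize}
\item (a): $\bc^{2n-2}\otimes\bc^2$ type for $C_{n-1}\oplus\bc$;
\item (b), (c): $\bc^{2p}\otimes\bc^{2q}$ or $\bc^{2p}\otimes\bc^{2q+1}$;
\item (d): $\bc^{2n}$;
\item (e): $\bc^{n-1}\oplus(\bc^{n-1})^*$;
\item (f): spin$\otimes\bc^2$;
\item Case 1: $\bc^2\otimes\bc^2\otimes\bc^2$.
\end{itemize}
The Jacobi condition is exactly the known condition that the even part of a basic classical superalgebra acting on its odd part has $\fgaa$-invariant quartic identity. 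It reduces to a linear relation among the $c_j$ weighted by the ratios of $\kappa_\fg|_{\fl_j}$ to the normalized forms, i.e.\ by Dynkin indices of $\fl_j\subset\fg$.

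We would compute these indices from the embeddings $\fgaa\subset\fg^{\si^2}\subset\fg$ and check that the prescribed $J$ satisfies the relation:
\begin{itemize}
\item $\pm1$ in Case 4, giving the orthosymplectic structure $B(m,n)$, $D(m,n)$, $C(n)$, $B(0,n)$;
\item $A(m,0)$ from (e), where the center gets weight $-(n-2)$ to match $\mathfrak{gl}$;
\item $-3$ for $F(4)$;
\item $2\alpha,1,-2(1+\alpha)$ summing to $0$ for $D(2,1;\alpha)$.
\end{itemize}
A convenient way to do the check is to evaluate the identity on a single highest weight vector $x$ of the module, since the quartic invariant space is low-dimensional and equivariance from (i) spreads the identity.

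Finally, we identify the superalgebras by comparing $\Lz$ and the $\Lz$-module $\Lo$ with Kac's list. Non-degeneracy of the bracket and simplicity then yield the stated list.
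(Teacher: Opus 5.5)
There is a genuine gap. Your preliminary steps are sound:
\begin{itemize}
\item $[[z,x],y]\in\fgaa$ by the eigenvalue count;
\item symmetry follows from $[[z,x],y]-[[z,y],x]=[z,[x,y]]=0$, since $[x,y]\in\fgcc$;
\item equivariance holds because $J$ is a scalar on each ideal;
\item since $\kapg(a,[[z,x],y])=\omega([a,x],y)$ holds exactly, polarization shows that the odd Jacobi identity is equivalent to the vanishing of the invariant quartic $q(x)=\sum_j c_j\,\kapg(\pi_j m(x),\pi_j m(x))$, where $m(x)=[[z,x],x]$.
\end{itemize}
This Kostant-type reformulation is a legitimate alternative to the paper's route. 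The problem is that the substance of the theorem, namely that the \emph{prescribed} $J$ makes $q\equiv 0$, is never established. Moreover, the method you propose for establishing it cannot work. Let $v$ be a highest weight vector of $\fgbb$ of weight $\lambda\neq 0$, and pick $h$ in the Cartan subalgebra with $\lambda(h)\neq 0$. Then $e^{t\,\ad h}v=e^{t\lambda(h)}v$, so any $\fgaa$-invariant quartic $P$ satisfies $P(v)=e^{4t\lambda(h)}P(v)$ for all $t$, i.e.\ $P(v)=0$. (Equivalently, $[[v,v]_s,v]$ would have weight $3\lambda$, which is not a weight of $\fgbb$.) So every choice of the $c_j$ passes the test at a single highest weight vector. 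The vector $v$ lies in the null cone, and equivariance propagates nothing from there. You need a non-null test vector, or polarized arguments of different weights. The latter is what the paper does with $v_2,v_7,v_8$ in the $A_5$ case, and it is exactly how the condition $\alpha+\beta+1=0$ is detected.

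The one explicit relation you state is also wrong: $2\alpha+1-2(1+\alpha)=-1\neq 0$. The three copies of $\mathfrak{sl}_2$ sit in $\mathfrak{sl}_6$ with $\mathrm{tr}\,H_1^2=\mathrm{tr}\,H_3^2=4$ and $\mathrm{tr}\,H_2^2=2$. The weighted condition is therefore $\frac{2\alpha}{2}+1-\frac{2(1+\alpha)}{2}=0$, which is Scheunert's $\alpha_1+\alpha_2+\alpha_3=0$ for $(\alpha,1,-1-\alpha)$.

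These weighted parameters are also what your final identification step lacks. All $D(2,1;\alpha)$ share the same $\Lz$ and the same $\Lz$-module $\Lo$, so comparing with Kac's list cannot show that you obtain $D(2,1;\alpha)$ for every $\alpha\neq 0,-1$.

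More generally, the claim that Jacobi ``reduces to a linear relation weighted by Dynkin indices'' needs proof. You must know that the quartics $Q_j$ are proportional; examples are uniqueness of the hyperdeterminant on $\bc^2\otimes\bc^2\otimes\bc^2$, or $\mathrm{tr}((xx^\dagger)^2)=\mathrm{tr}((x^\dagger x)^2)$ for $\mathfrak{so}(V)\oplus\mathfrak{sp}(W)$ on $V\otimes W$. The proportionality constants also depend on the module, not only on the indices of $\fl_j\subset\fg$.

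One way to close this within your framework:
\begin{itemize}
\item take the known superalgebra with the same $(\Lz,\Lo)$ and compare with its invariant form;
\item use that the $\fgaa$-invariant symplectic form on $\fgbb$ is unique up to scalar;
\item check that $\kapg(J^{-1}\cdot,\cdot)$ is proportional to the restriction of that form to $\fgaa$.
\end{itemize}
The paper avoids invariant theory in the matrix cases by showing $J([[z,X],Y])=XY+YX$. Equivariance and the odd Jacobi identity then follow from associativity. It handles $A_5$ by an explicit isomorphism with Scheunert's model.
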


In the traditional study of Lie theory, Lie superalgebras have conventionally been regarded as extensions of Lie algebras.
Consequently, the investigation of Lie superalgebras has typically relied on the theory of Lie algebras and their representation theory.
In this paper, we construct Lie superalgebras from generalized symmetric pairs with symplectic subspace, bypassing the need for traditional root system methods.
This approach can be viewed as a geometric realization of classical Lie superalgebras, and it yields almost all classical Lie superalgebras with the exception of $G(3)$.
For $A(m, n)$ with $m, n>0$, as well as a discussion of  $G(3)$, see Remark \ref{remAmn}.

For basic Lie superalgebras $A(m, n)$, $B(m, n)$, $C(n)$, $D(m,n)$ and $B(0,n)$,
explicit matrix realizations of their elements can be readily constructed.
By contrast,  providing a unified and explicit geometric construction of exceptional Lie superalgebras remains an interesting challenge.
The exceptional Lie superalgebras $F(4)$, $G(3)$ and $D(2,1;\alpha)$ ($\alpha \neq 0, -1$) were first discovered by Freund and Kaplansky \cite{FK},
with their explicit constructions later provided by Scheunert, Nahm, and Rittenberg \cite{SNR}.
Specifically, the exceptional Lie superalgebra $F(4)$ is typically constructed via Clifford algebra techniques.
In this paper, we present an alternative geometric realization of $F(4)$ and $D(2,1;\alpha)$ directly from the $4$-symmetric spaces of $E_6$ and $A_5$ respectively, 
offering new insights into their structural origin and potential applications in mathematical physics.

Having established the geometric construction, we next investigate  the constants arising from the squares of two distinct Dirac operators: 
the Dirac operator $D$ for the Lie algebra $\mathfrak{g} = \mathfrak{g}^{\sigma^2} +\mathfrak{g}^{-\sigma^2}$, 
and the Dirac operator $D_s$ for the associated Lie superalgebra $L = \fgaa + \fgbb$.  
In the 1970s, Dirac operators were introduced into representation theory by Parthasarathy \cite{Par} 
as a tool to construct the discrete series representations. This approach was further developed by Atiyah and Schmid \cite{AS}, 
who constructed all discrete series representations as kernels of the Dirac operator.
In the 1990s, Vogan defined the Dirac operator in a completely algebraic setting. He introduced the concept of Dirac cohomology and made a conjecture on the infinitesimal character of irreducible representations with nonzero Dirac cohomology. This conjecture was proved by Huang and Pand\v{z}i\'c \cite{HP1}.
Kostant generalized this result to the case of his cubic Dirac operator and applied it to the topology of homogeneous spaces \cite{Ko03}.
The Dirac operator was also defined for a Lie superalgebra of Riemannian type by Huang and Pand\v{z}i\'c, and an analog of Vogan's conjecture for this Dirac operator was proved in \cite{HP05}. 

Let $(\fg, \si)$ be a generalized symmetric pair with symplectic subspace, and let $L=\fgaa+ \fgbb$ be the Lie superalgebra associated to this pair in the sense of Theorem \ref{main}.
Then $\sigma^2$  is evidently an involution of Hermitian type of $\mathfrak{g}$. 
Let $\kapg(\cdot, \cdot)$ be the Killing form of $\fg$. 
Choose $z\in \fgcc$  in the center of $\fgaa+\fgcc$ and define $\langle \cdot, \cdot \rangle: \fgbb\times \fgbb \rightarrow \bc$
  by
\[
\langle X, Y\rangle =\kapg([z, X], Y), ~ \forall X, Y\in\fgbb. 
\]
This gives a symplectic form on the odd part of $L$.  

Choose $\{x_1, x_2, \ldots, x_s, \partial_1, \partial_2, \ldots, \partial_s\}$ to be a basis of $\fgbb$ satisfying $\langle x_i, \partial_j\rangle=\delta_{ij}$
and $\langle x_i , x_j\rangle=\langle \partial_i ,\partial_j\rangle=0$ for $1\leq i, j\leq s$.
Then $\{-[z, \partial_1], -[z, \partial_2], \ldots, -[z, \partial_s], [z, x_1],   $ \\$ [z, x_2], \ldots, [z, x_s]\}$ is an orthonormal basis of $\fgdd$ with respect
to the symplectic form $\langle \cdot, \cdot \rangle= \kapg([z, \cdot], \cdot)$.

The Dirac operator for the Lie algebra $\fg=\fg^{\si^2}\oplus \fg^{-\si^2}$ is
\begin{eqnarray*}
D&=&\sum_{i=1}^s (-x_i\otimes [z, \partial_i]+ \partial_i \otimes [z, x_i]- [z, \partial_i]\otimes x_i + [z, x_i]\otimes\partial_i)  \\
&& \in \mathcal{U}(\fg)\otimes \mathcal{C}(\fgbb+\fgdd).
\end{eqnarray*}
where $\mathcal{C}(\fgbb+\fgdd)$ denotes the Weyl algebra of $\fg^{-\si^2}=\fgbb+\fgdd$ with respect to the Killing form $\kapg(\cdot, \cdot)$ on it. 
Meanwhile, the Dirac operator for the Lie superalgebra $L=\fgaa+\fgbb$ is
\[
D_s=\sum_{i=1}^s (x_i\otimes \partial_i-\partial_i\otimes x_i) \in \mathcal{U}(L)\otimes \mathcal{W}(\fgbb),
\]
where $ \mathcal{W}(\fgbb)$ denotes the Weyl algebra of $\fgbb$ with respect to the symplectic form $\langle \cdot, \cdot\rangle$ on it.

Let $C$  and $C_s$ be the constants in $D^2$ and $D_s^2$, respectively. 
Suppose that $L$ has a non-degenerate Killing form.  
Let $\{X_i\}$ be a basis of $\Lz=\fgaa$, and let $\{X_i^*\}$ be its dual basis with respect to the Killing form.
The Casimir element of $\Lz$ is defined by $\Omega_{\Lz}=\sum X_i\otimes X_i^*$.
Now we define
\begin{equation}
J^*(\Omega_{\Lz})=\sum X_i\otimes J^{-1}(X_i^*).
\end{equation}
For simplicity, we also denote the constant of $J^*(\Omega_{\Lz})$ on $\Lo$ by  $J^*(\Omega_{\Lz})$.
Then we have the following formula. 
\begin{theorem}
Let $(\fg, \si)$ be a generalized symmetric pair with symplectic
subspace, and let $L=\fgaa + \fgbb$ be the associated Lie superalgebra as in Theorem \ref{main}.
Assume that $L$ has a non-degenerate Killing form. Then we have
\[
C=J^*(\Omega_{\Lz})\cdot \dim \Lo\cdot h^\vee_{\mathrm{sign}},
\]
where $C$ is the constant in $D^2$ for $\fg=\fg^{\si^2}+ \fg^{-\si^2}$, and $h^\vee_{\mathrm{sign}}$ is the signed dual Coxeter number of $L$ given in Table \ref{tab:dualcoxeter}.
\label{mainformula}
\end{theorem}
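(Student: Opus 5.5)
We first compute $C$ directly from the general formula for the square of a Dirac operator attached to a symmetric pair. For the involution $\si^2$ with $\fk:=\fg^{\si^2}=\fgaa+\fgcc$ and $\fp:=\fg^{-\si^2}=\fgbb+\fgdd$, the standard Parthasarathy-type computation gives $D^2=-\Omega_\fg\otimes 1+\Delta(\Omega_{\fk})+C$. Here $\Delta$ is the diagonal embedding of $\cU(\fk)$ into $\cU(\fg)\otimes\mathcal{C}(\fp)$, and the Casimirs are taken with respect to $\kapg$. In the symmetric case $C$ is expressed through $\|\rho_\fg\|^2-\|\rho_\fk\|^2$, or equivalently through the trace of the $\fk$-Casimir acting on $\fp$. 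Since $\kapg$ is the Killing form, $\Omega_\fg$ acts on $\fg$ by $1$. Hence $\mathrm{tr}_\fp(\Omega_\fk)=\frac12\dim\fp$ by the usual argument (cf. $\mathrm{tr}_\fg\Omega_\fg=\dim\fg$ split between $\fk$ and $\fp$), and this gives $C$ as an explicit multiple of $\dim\fp=2\dim\fgbb=2\dim\Lo$, up to the convention-dependent factor coming from the Weyl/Clifford normalization. Concretely, I will expand $D^2$ using the orthonormal basis of $\fgdd$ written above and the relation $\langle\cdot,\cdot\rangle=\kapg([z,\cdot],\cdot)$. I then collect the scalar term, which should come out as a fixed rational multiple of $\dim\Lo$ times the eigenvalue of $\Omega_{\fgaa+\fgcc}$ on $\fgbb$ computed with $\kapg$.

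The second step is to rewrite that eigenvalue in terms of $L$. On $\fgbb$, the element $z$ acts by a scalar, since $\fgbb$ and $\fgdd$ are the two eigenspaces of $\ad z$ on $\fp$, by Theorem \ref{main1}. So $\Omega_{\fk}|_{\fgbb}$ splits into the $\bc z$ part and the $\Omega_{\fgaa}^{\kapg}+\Omega_{\fgcc'}$ part, where $\fgcc'$ is the complement of $z$ in $\fgcc$. Using the superbracket \eqref{superbracket}, I will compare $\kapg$ restricted to $\fgaa$ with the Killing form $\kapl$ of $L$. On each simple (or central) ideal $\fgaa_j$, $\kapl|_{\fgaa_j}=\kappa_{\fgaa_j}-\mathrm{str}$ contributions from $\Lo$. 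The odd contribution involves $J$: $\mathrm{tr}_{\Lo}(\ad x\,\ad y)$ for $x,y\in\fgaa_j$ is unchanged, but the structure constants of $[\Lo,\Lo]_s$ are rescaled by $J$. This rescaling is what converts the Killing-dual basis of $\fgaa$ into $J^{-1}(X_i^*)$. The invariance identity $\kapl([x,y]_s,a)=\kapl(x,[y,a])$ for $x,y\in\Lo$, combined with $\kapg([[z,x],y],a)=\kapg([z,x],[y,a])$, shows that $\kapl|_{\fgaa_j}=c_j\,\kapg|_{\fgaa_j}\circ J^{-1}$ for a single proportionality constant $c$. The constant $c$ relates the Killing form of $L$ to $\kapg$ and equals the ratio of Coxeter-type numbers, and it is here that $h^\vee_{\mathrm{sign}}$ enters. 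Via the identification $\kapl=(2h^\vee_{\mathrm{sign}})\cdot(\text{normalized form})$, the $\kapg$-eigenvalue becomes $J^*(\Omega_{\Lz})|_{\Lo}\cdot h^\vee_{\mathrm{sign}}$ up to the universal factor.

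The last step is to fix the universal factor and check the contributions of $z$ and $\fgcc'$. I expect both can be absorbed because $\mathrm{tr}_\fp\Omega_\fk=\frac12\dim\fp$ holds globally. Whatever survives must be proportional to the $\fgaa$-part, and the proportionality is fixed by the requirement $\Omega_\fg=1$. I will verify the final identity case by case over Table \ref{table1} (cases (a)–(f), excluding those where $\kapl$ is degenerate, such as $A(n,n)$ or $D(n+1,n)$). In each case I compute $\dim\Lo$, $h^\vee_{\mathrm{sign}}$ from Table \ref{tab:dualcoxeter}, and $J^*(\Omega_{\Lz})$ from the explicit $J$.

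The main obstacle is the second step: proving, uniformly, that $\kapl$ on each ideal of $\fgaa$ equals a common multiple of $\kapg\circ J^{-1}$, with the multiple expressed by $h^\vee_{\mathrm{sign}}$. I would first try the invariance argument. If it does not yield the constant cleanly, I would fall back on the case-by-case verification, which is finite given Theorem \ref{main2}.
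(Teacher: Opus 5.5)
\textbf{The paper's route.} The paper proves the identity by a direct table check. It reads $C=\|\rho_{\mathfrak k}\|^2-\|\rho_\fg\|^2$ off Table \ref{Weylvec}, with norms taken so that long roots of $\fg$ have squared length $2$. It computes $J^*(\Omega_{\Lz})$ from (\ref{Omega1})--(\ref{Omega4}) and Table \ref{Index}, and then multiplies the three factors case by case. Your fallback is the same check.

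\textbf{The normalization gap in Step 1.} The $C$ your Step 1 would feed into that check is not the paper's $C$. You fix the normalization by $\Omega_\fg=1$, i.e.\ by $\kapg$. In that normalization your own trace identity already finishes the computation: $\|\rho_\fg\|^2=\frac1{24}\dim\fg$ and $\|\rho_{\mathfrak k}\|^2=\frac1{24}\mathrm{tr}_{\mathfrak k}\Omega_{\mathfrak k}=\frac1{24}\bigl(\dim\mathfrak k-\frac12\dim\fp\bigr)$. Hence $C=-\frac1{16}\dim\fp=-\frac18\dim\Lo$ in \emph{every} case. For $E_6$ this gives $-2$, while $J^*(\Omega_{\Lz})\cdot\dim\Lo\cdot h^\vee_{\mathrm{sign}}=1\cdot16\cdot3=48$. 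The paper's values are $2h^\vee(\fg)$ times yours, namely $C=-\frac{h^\vee(\fg)}{4}\dim\Lo$. (Tables \ref{DcLie} and \ref{JOmega} disagree on the sign.) So the ``convention-dependent factor'' is not universal: it is $h^\vee(\fg)$. It is also the \emph{only} case-dependent part of $C$, because $\Omega_{\mathfrak k}$ acts on all of $\fp$ by $\frac12$. The theorem is therefore equivalent to $J^*(\Omega_{\Lz})|_{\Lo}\cdot h^\vee_{\mathrm{sign}}=\frac14 h^\vee(\fg)$, and nothing in your Step 2 produces $h^\vee(\fg)$.

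\textbf{Further problems in Step 2.}

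First, $\ad z$ is not a scalar on $\fgbb$. Since $\si z=-z$, $\ad z$ maps $\fgbb$ onto $\fgdd$; that is precisely why $\kapg([z,\cdot],\cdot)$ is nonzero on $\fgbb$. The $\ad z$-eigenspaces $\fp^{\pm}$ are transversal to $\fgbb$, and only $(\ad z)^2$ is scalar on $\fp$.

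Second, $\Omega_{\mathfrak k}|_\fp=\frac12$ constrains only the \emph{sum} of the $\fgaa$- and $\fgcc$-parts on $\fgbb$, so it cannot absorb the $\fgcc$-contribution. What you need is the separate fact $\Omega^{\kapg}_{\fgaa}|_{\fgbb}=\frac14$. This is true, but it needs an argument of its own. For instance:
\begin{itemize}
\item Take a basis $\{b_j\}$ of $\fgbb$, write its $\kapg$-dual basis in $\fgdd$ as $\{[z,c_j]\}$, and set $\Omega_{\fgbb}:=\sum_j b_j[z,c_j]$.
\item Antisymmetry of $\sum_j b_j\otimes c_j$ gives $\sum_j\ad c_j\,\ad b_j=\frac12\ad w$, where $w=\sum_j[c_j,b_j]\in\fgcc$ and $[z,w]=0$.
\item By cyclicity, the traces of $\Omega_{\fgbb}$ on $\fgaa$ and on $\fgcc$ are $\frac12\mathrm{tr}_{\fgbb}(\ad w\,\ad z)$ and $\frac12\mathrm{tr}_{\fgdd}(\ad w\,\ad z)$, and these are equal.
\item Combined with $\mathrm{tr}_{\fgbb}\Omega_{\fgaa}=\mathrm{tr}_{\fgaa}\Omega_{\fgbb}$ and $\mathrm{tr}_{\fgbb}\Omega_{\fgcc}=\mathrm{tr}_{\fgcc}\Omega_{\fgbb}$, this yields $\mathrm{tr}_{\fgbb}\Omega_{\fgaa}=\mathrm{tr}_{\fgbb}\Omega_{\fgcc}$.
\end{itemize}

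Third, your invariance idea is sound and is the good part of the proposal. The form $B=\kapg(J^{-1}\cdot,\cdot)\oplus\kapg([z,\cdot],\cdot)$ is invariant for $[\cdot,\cdot]_s$, so $\kapl=cB$ by simplicity. Hence $J^*(\Omega_{\Lz})=c^{-1}\Omega^{\kapg}_{\fgaa}$, which acts on $\Lo$ by $\frac1{4c}$.

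The theorem then reduces to $c=h^\vee_{\mathrm{sign}}/h^\vee(\fg)$, which you only assert. Since $h^\vee_{\mathrm{sign}}$ is defined ad hoc (signs and factors $\frac12$ chosen to fit), this amounts to matching the embedding index in $\fg$ of the ideal of $\fgaa$ on which $J=\id$ against the normalization of $L$'s form. That still has to be checked case by case, which brings you back to the paper's tables, now with the correct $C$.
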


 This formula reveals an unexpected unification: the constant $C$, which arises purely from the geometry of the $4$-symmetric space $(\fg,\sigma)$, decomposes into a product of three invariants of the associated Lie superalgebra $L$.  
 This can be seen as evidence of the deep interplay between symmetric space geometry and Lie superalgebra theory.

Next, we prove a general result on the Dirac constant for contragredient Lie superalgebras. 
In \cite{Kos1} and \cite{Kos}, B. Kostant studies $\Lz+ \Lo$, where $\Lz$ is a Lie algebra which admits a non-singular symmetric bilinear form,
and $\Lo$ has a non-singular $\Lz$-invariant alternating bilinear form. 
He defines a map in (0.4) and (0.5) of \cite{Kos}: 
\[\nu^*: \Lz \hookrightarrow \fsp(\Lo)\cong S^2(\Lo) \hookrightarrow \mathcal{W}(\Lo).\]
He also gives a necessary and sufficient condition for $\Lz\oplus \Lo$ to become a Lie superalgebra, namely that $\nu^*(\Omega_{\Lz})$ is a constant in $\mathcal{W}(\Lo) $ for  the Casimir element $\Omega_{\Lz}$ of $\Lz$.  
\begin{theorem}
Let $L$ be a complex contragredient Lie superalgebra with non-degenerate Killing form. Then the Casimir element $\Omega_{\Lz}$ of 
$\Lz$ acts on $\Lo$ as a constant, which is denoted by $\KKs$. Moreover, we have 
\[\KKs=\left. \Omega_{\Lz}\right|_{\Lo}=\displaystyle\frac{1}{2}. \] 
\label{main12}
\end{theorem}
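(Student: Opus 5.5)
Since $\Omega_{\Lz}$ is built from the Killing form $\kapl$ of $L$ restricted to $\Lz$, I will compute its eigenvalue on $\Lo$ by a trace argument that uses only the supertrace definition of $\kapl$. First I check that $\Omega_{\Lz}$ acts on $\Lo$ as a scalar. $\Omega_{\Lz}$ is a central element of $\cU(\Lz)$. For every contragredient $L$ with non-degenerate Killing form, the module $\Lo$ is either irreducible or a sum $V\oplus V^*$ of two dual irreducibles (the type I cases $A(m,n)$, $C(n)$). In the second case the Casimir takes the same value on $V$ and $V^*$. So in all cases $\Omega_{\Lz}|_{\Lo}=\KKs\cdot\id$ for some constant $\KKs$.

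Next I compute the trace of $\Omega_{\Lz}$ on $\Lo$. Let $\{X_i\}$ be a basis of $\Lz$ with dual basis $\{X_i^*\}$ for $\kapl|_{\Lz}$. Then
\[
\KKs\dim\Lo=\sum_i \mathrm{tr}_{\Lo}(\ad X_i\,\ad X_i^*).
\]
By definition of the Killing form as a supertrace, for $X,Y\in\Lz$ we have
\[
\kapl(X,Y)=\mathrm{tr}_{\Lz}(\ad X\ad Y)-\mathrm{tr}_{\Lo}(\ad X\ad Y).
\]
Summing over the dual bases gives
\[
\dim\Lz=\sum_i\kapl(X_i,X_i^*)=\mathrm{tr}_{\Lz}(\Omega_{\Lz})-\KKs\dim\Lo .
\]

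A second identity comes from the odd part. Take a basis $\{y_j\}$ of $\Lo$ with dual basis $\{y_j^*\}$ for $\kapl|_{\Lo}$, so that the full Casimir of $L$ is $\Omega_L=\Omega_{\Lz}+\sum_j y_j^* y_j$ (with the appropriate sign and order convention). On the adjoint module $L$, $\Omega_L$ acts by the scalar $1$, because its value on $\ad$ is the supertrace normalization of the Killing form. Taking the supertrace of $\Omega_L$ on $L$ gives
\[
\mathrm{sdim}\,L=\mathrm{str}_L(\Omega_{\Lz})+\mathrm{str}_L\Big(\sum_j y_j^*y_j\Big).
\]
The odd term equals $-\sum_j\kapl(y_j^*,y_j)$ up to sign, which is $\pm\dim\Lo$. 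This yields
\[
\mathrm{tr}_{\Lz}\Omega_{\Lz}-\mathrm{tr}_{\Lo}\Omega_{\Lz}=\dim\Lz-\dim\Lo+\epsilon\dim\Lo .
\]
I would rather get a cleaner relation directly: the odd-odd bracket $[\Lo,\Lo]\subset\Lz$, and invariance of $\kapl$, give $\mathrm{tr}_{\Lo}\Omega_{\Lz}$ via $\sum_j[y_j^*,y_j]$-type contractions. The key identity I expect to work is
\[
\kapl(X,Y)=\tfrac12\,\mathrm{tr}_{\Lo}(\ad X\ad Y)\quad\text{on }\Lz .
\]
This is equivalent to $\mathrm{tr}_{\Lz}(\ad X\ad Y)=\tfrac32\,\mathrm{tr}_{\Lo}(\ad X\ad Y)$. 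Once this holds, contracting with the dual basis gives $\dim\Lz=\tfrac12\KKs\dim\Lo$ if that form of the relation is correct. This is suspicious, so I will instead normalize directly: contracting the key identity gives $\sum_i\mathrm{tr}_{\Lo}(\ad X_i\ad X_i^*)=2\dim\Lz$. That mismatch shows the correct route must use the Jacobi identity on odd elements rather than this naive identity.

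Because of the difficulty just described, the plan I will actually carry out is a case check combined with a structural shortcut. Write $\Lz=\bigoplus_k\fg_k$ as a sum of simple ideals and the centre. On each $\fg_k$, both $\kapl$ and the forms $\mathrm{tr}_{\Lz}$, $\mathrm{tr}_{\Lo}$ are multiples of the normalized invariant form. Comparing them ideal by ideal, the Casimir of $\fg_k$ with respect to $\kapl$ acts on $\Lo$ by
\[
c_k=\frac{\mathrm{tr}_{\Lo}|_{\fg_k}\cdot\dim\fg_k}{\kapl|_{\fg_k}\cdot\dim\Lo}
\]
in the appropriate normalization, and $\KKs=\sum_k c_k$. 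This reduces the theorem to computing, for each ideal, the ratio $\mathrm{tr}_{\Lo}/(\mathrm{tr}_{\Lz}-\mathrm{tr}_{\Lo})$. These ratios are given by Dynkin indices, equivalently by the dual Coxeter numbers of the ideals and of $L$. I will verify $\sum_k c_k=\tfrac12$ for $A(m,n)$ ($m\ne n$), $B(m,n)$, $C(n)$, $D(m,n)$ ($m\neq n+1$), $F(4)$ and $G(3)$; note $D(2,1;\alpha)$ is excluded because its Killing form is zero. The main obstacle is finding a uniform conceptual reason, presumably the identity $2\Omega_{\Lz}|_{\Lo}=\Omega_L|_{\Lo}-\big(\text{odd part}\big)$ coming from the super-Jacobi identity $\sum_j[[x,y_j],y_j^*]$. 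Failing that, the casewise index computation suffices.
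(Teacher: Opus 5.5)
Your fallback plan is the paper's proof: after splitting $\Lz$ into simple ideals and the centre, your $c_k$ equals the paper's term $\frac{C_U}{1-l_U\dim V}$ (resp.\ $\frac{C_U}{1-2l_U\dim V}$, and $-\frac{1}{\dim\Lo}$ for the centre) from (\ref{Omega1}) and (\ref{Omega3}), and the value $\frac12$ is then checked type by type as in Table~\ref{SCas}. The uniform reason you were after comes from contracting the Killing form on $\Lo$ rather than on $\Lz$: with $\kapl(y_j,y^k)=\delta_{jk}$, invariance gives $\sum_j\ad y^j\ad y_j|_{\Lo}=\Omega_{\Lz}|_{\Lo}$ and $\sum_j\ad y_j\ad y^j|_{\Lo}=-\Omega_{\Lz}|_{\Lo}$, while the supertrace and cyclicity give $\kapl(y^j,y_j)=\mathrm{tr}_{\Lo}(\ad y_j\ad y^j)-\mathrm{tr}_{\Lo}(\ad y^j\ad y_j)$; summing over $j$ gives $-\dim\Lo=-2\,\mathrm{tr}_{\Lo}\Omega_{\Lz}$, so together with your scalar-action step $\KKs=\frac12$ follows with no case analysis.
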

For simplicity,  we call $\KKs$ the \emph{Kostant constant} for Lie superalgebra. 
Then this implies naturally the following corollary.

\begin{corollary}
Let $L$ be a complex contragredient Lie superalgebra with nondegenerate Killing form.
Then the constant in $D_s^2$ satisfies 
\[
C_s=\nu^*(\Omega_{\Lz})=\frac{1}{16} \dim \Lo.
\] 
\label{KCos}
\end{corollary}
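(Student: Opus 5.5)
We derive the corollary from Theorem \ref{main12} by computing $D_s^2$ directly in $\mathcal{U}(L)\otimes\mathcal{W}(\Lo)$ and isolating its constant term, following the Huang--Pand\v{z}i\'c computation for Lie superalgebras of Riemannian type \cite{HP05}. We take the Killing form $\kapl$ of $L$ as the invariant form $B_L$; it is nondegenerate by hypothesis, symmetric on $\Lz$ and skew on $\Lo$. The symplectic form $\langle\cdot,\cdot\rangle$ on $\Lo$ is then a nonzero multiple of $\kapl|_{\Lo}$. We normalize the Weyl algebra relations and the dual bases $\{x_i,\partial_i\}$ with respect to this form.

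\textbf{Step 1: expand $D_s^2$.} Multiplying out $D_s=\sum_i(x_i\otimes\partial_i-\partial_i\otimes x_i)$ produces three kinds of terms.
\begin{itemize}
\item Terms with $\mathcal{U}(L)$-part a product of two odd elements. Using $[\cdot,\cdot]_s$ and the super-commutation $ab+ba=[a,b]_s$ for odd $a,b$, together with the Weyl relations $uv-vu=\langle u,v\rangle$, these reorganize into $\Omega_L\otimes 1$ plus cross terms.
\item Cross terms of the form $\sum [a,b]_s\otimes(\text{quadratic in }\mathcal{W})$. These combine into $-\sum_j X_j\otimes\nu^*(X_j^*)$, where $\nu^*:\Lz\to\fsp(\Lo)\cong S^2(\Lo)\subset\mathcal{W}(\Lo)$ is Kostant's map.
\item A term $1\otimes\nu^*(\Omega_{\Lz})$, obtained by completing the square.
\end{itemize}
The result has the standard shape
\[
D_s^2=\Omega_L\otimes 1+\Delta(\Omega_{\Lz})-\Omega_{\Lz}\otimes 1+1\otimes\nu^*(\Omega_{\Lz})\cdot c,
\]
up to signs and a normalization $c$. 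Here $\Delta:\Lz\to\mathcal{U}(L)\otimes\mathcal{W}(\Lo)$ is the diagonal embedding $X\mapsto X\otimes1+1\otimes\nu^*(X)$. The constant $C_s$ is the scalar part, namely $\nu^*(\Omega_{\Lz})$, which by Kostant \cite{Kos} is a scalar precisely because $L$ is a Lie superalgebra. This gives the first equality $C_s=\nu^*(\Omega_{\Lz})$.

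\textbf{Step 2: evaluate the scalar.} Write $\nu^*(\Omega_{\Lz})=\sum_j\nu^*(X_j)\nu^*(X_j^*)$. Each $\nu^*(X)$ is the quadratic element $\frac12\sum_i\big(x_i\,[X,\partial_i]'-\ldots\big)$ representing $\ad X|_{\Lo}$, where $[X,\partial_i]'$ denotes $[X,\partial_i]$ expressed in the dual basis. Reordering this product of quartic Weyl monomials into symmetrized form, the constant term that survives is a trace. Concretely, the constant equals $\frac{1}{8}\,\mathrm{tr}_{\Lo}(\Omega_{\Lz})$ times the normalization relating $\langle\cdot,\cdot\rangle$ to $\kapl$. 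It arises from contracting both pairs of indices in $\sum_j \mathrm{tr}(\ad X_j\,\ad X_j^*)$-type expressions.

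Theorem \ref{main12} says $\Omega_{\Lz}$ acts on $\Lo$ by $\KKs=\frac12$, so $\mathrm{tr}_{\Lo}\Omega_{\Lz}=\frac12\dim\Lo$. Hence $C_s=\frac18\cdot\frac12\dim\Lo=\frac1{16}\dim\Lo$.

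\textbf{Main obstacle.} The delicate step is the precise coefficient in Step 2: the factor $\frac18$ coming from the symmetrization in the Weyl algebra, together with sign conventions. A quicker route, which we will use as a cross-check, is the Kostant-type identity
\[
\nu^*(\Omega_{\Lz})=\tfrac{1}{8}\,\mathrm{tr}_{\Lo}(\Omega_{\Lz}).
\]
This is the symplectic analogue of the orthogonal formula $\frac1{24}$-style constant. We will verify it by checking the smallest case, $\fsp(2)=A_1$ acting on $\bc^2$, i.e.\ $B(0,1)$. There $\dim\Lo=2$, and the classical Weyl-algebra computation of the $\mathfrak{sl}_2$ Casimir of the oscillator representation gives the constant $-\frac{3}{16}$ in the standard normalization. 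Rescaling it to the Killing form of $B(0,1)$ must yield $\frac18$; this fixes the normalization and confirms the sign.
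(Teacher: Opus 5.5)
Your outline follows the paper's chain exactly: identify the constant in $D_s^2$ with $\nu^*(\Omega_{\Lz})$, use $\nu^*(\Omega_{\Lz})=\frac18\,\mathrm{tr}\,\Omega_{\Lz}|_{\Lo}$, and substitute $\KKs=\frac12$ from Theorem~\ref{main12}. The paper computes nothing beyond that substitution; it quotes both earlier facts from Kostant (Proposition~\ref{Kos}). The one step where you depart from this is fixing the coefficient and its sign by a $B(0,1)$ calibration, and that step does not work as described.

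Here is why. Since $\nu^*(X)$ is the unique symmetrized quadratic with $[\nu^*(X),u]=X\cdot u$, the scalar $\nu^*(\Omega_{\Lz})$ is the Casimir eigenvalue of the oscillator representation. It does not change when $\langle\cdot,\cdot\rangle$ is rescaled, so the ``normalization relating $\langle\cdot,\cdot\rangle$ to $\kapl$'' in your Step 2 does not enter it. Take $e=\frac12(a^\dagger)^2$, $f=-\frac12a^2$, $h=a^\dagger a+\frac12$. Then $\frac12h^2+ef+fe=-\frac38$ for the trace form. For $B(0,1)$ one has $\kapl|_{\Lz}=3\,\mathrm{tr}$, hence $\nu^*(\Omega_{\Lz})=-\frac18=-\frac18\,\mathrm{tr}\,\Omega_{\Lz}|_{\Lo}$. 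No positive rescaling turns your negative number into $+\frac18$, so the check refutes the sign you intended to confirm. Moreover, the constant term of $\sum_j\nu^*(X_j)\nu^*(X_j^*)$ is a universal multiple of $\mathrm{tr}\,\Omega_{\Lz}|_{\Lo}$. So the correct calibration gives $\nu^*(\Omega_{\Lz})=-\frac18\,\mathrm{tr}\,\Omega_{\Lz}|_{\Lo}$ in general.

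What does hold concerns the scalar term $K$ in $D_s^2=-\Omega_L\otimes 1+\Delta(\Omega_{\Lz})+K$. Act on $\bc\otimes M$, where $\bc$ is the trivial $L$-module and $M$ is the oscillator module of $\mathcal{W}(\Lo)$. There $D_s$ and $\Omega_L\otimes 1$ act by $0$, while $\Delta(\Omega_{\Lz})$ acts by $\nu^*(\Omega_{\Lz})$. Hence $K=-\nu^*(\Omega_{\Lz})=\frac18\,\mathrm{tr}\,\Omega_{\Lz}|_{\Lo}=\frac1{16}\dim\Lo$ by Theorem~\ref{main12}. This presupposes $\langle\cdot,\cdot\rangle=\kapl|_{\Lo}$; replacing it by $c\,\kapl|_{\Lo}$ leaves $\nu^*(\Omega_{\Lz})$ unchanged but divides $K$ by $c$.

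Consequently, your Step 1 identification (the scalar part equals $\nu^*(\Omega_{\Lz})$) and the positive value $\frac1{16}\dim\Lo$ cannot both hold under one sign convention. The equality $C_s=\nu^*(\Omega_{\Lz})=\frac1{16}\dim\Lo$ only makes sense once one fixes how $C_s$ enters $D_s^2$. The paper inherits that convention without comment from Propositions~\ref{HP2} and~\ref{Kos}.

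To turn your outline into a proof, either cite Proposition~\ref{Kos} for the value of the constant as the paper does, or carry the sign explicitly through the completion of the square. The trivial-module evaluation above does the latter in one line, and is more reliable than trying to recover the sign by rescaling.
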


Let $\mathfrak{g}$ and $\mathfrak{h}$ be complex simple Lie algebras with irreducible representations $U$ and $V$, respectively.
Based on the study of the constant $\KKs$, it is reasonable to define  the  \emph{superadmissibility index of type $I\!I$} for $(\fg, U)$ and $(\fh, V)$ as  
\[
\Est(\fg, U; \fh, V) = \frac{C_U}{1 - l_U \dim V} + \frac{C_V}{1 - l_V \dim U}, 
\] 
and its degenerate form (when $\fh=0$ and $V$ is trivial)  as
 \[
\Est(\fg, U ) = \frac{C_U}{1 - l_U }.
\]

Similarly, we define the \emph{superadmissibility index of type $I$} for $(\fg, U)$ and $(\fh, V)$ as:    
\[
\Eso(\fg, U; \fh, V) = \frac{C_U}{1 - 2l_U \dim V} + \frac{C_V}{1 - 2l_V \dim U}-\frac{1}{2\dim U \dim V},
\]
and its degenerate form as 
\[
\Eso(\fg, U) = \frac{C_U}{1 - 2l_U  }  -\frac{1}{2\dim U}.
\]
 
Finally, we study the solutions of the superadmissibility index equations $\Est=\frac{1}{2}$ and $\Eso=\frac{1}{2}$, and hence obtain the following result. 
\begin{theorem}
\noindent{(1)} There is a one-to-one correspondence between type 2 Lie superalgebra $L$ with non-degenerate Killing form and the pairs $((\fg, U), (\fh, V))$ with $\Est(\fg, U; \fh, V) =\displaystyle \frac12$. 
The correspondence is determined by $\Lz\cong \fg\oplus \fh$ and $\Lo\cong U\otimes V$. 

\noindent{(2)}  There is a one-to-one correspondence between type 1 Lie superalgebra $L$ with non-degenerate Killing form and the pairs $((\fg, U), (\fh, V))$ with $\Eso(\fg, U; \fh, V) =\displaystyle \frac12$. 
The correspondence is determined by $\Lz\cong \fg\oplus \fh\oplus\bc$ and $\Lo\cong U\otimes V\oplus U^*\otimes V^*$. 
\label{main4}
\end{theorem}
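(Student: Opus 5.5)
The plan is to prove both parts by passing through the Kostant constant of Theorem \ref{main12} and then using Kac's classification of basic classical Lie superalgebras to get bijectivity.

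\emph{Forward direction.} Let $L$ be of type 2 with non-degenerate Killing form $\kapl$. The even part splits as $\Lz\cong\fg\oplus\fh$, with $\fh$ possibly $0$, and the odd part is $\Lo\cong U\otimes V$ with $U,V$ irreducible.

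1. Restrict $\kapl$ to each simple ideal of $\Lz$. On $\fg$ it equals
\[
\kappa_{\fg}+\mathrm{str}_{\Lo}=\kappa_{\fg}-\dim V\,\mathrm{tr}_U .
\]
2. Write $\mathrm{tr}_U=l_U\kappa_{\fg}$, so that $l_U$ is the ratio of the trace form of $U$ to the Killing form. Then $\kapl|_{\fg}=(1-l_U\dim V)\kappa_{\fg}$, and similarly $\kapl|_{\fh}=(1-l_V\dim U)\kappa_{\fh}$.
3. Let $C_U$ be the eigenvalue of the $\kappa_{\fg}$-Casimir of $\fg$ on $U$. The Casimir of $\Lz$ taken with respect to $\kapl$ is then
\[
\Omega_{\Lz}=\frac{\Omega_{\fg}}{1-l_U\dim V}+\frac{\Omega_{\fh}}{1-l_V\dim U}.
\]
Its action on $U\otimes V$ is exactly $\Est(\fg,U;\fh,V)$.
4. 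By Theorem \ref{main12} this action equals $\frac12$, so $\Est=\frac12$.

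For type 1 the argument is the same, with three changes. There is an extra center $\bc$, the odd part is $U\otimes V\oplus U^*\otimes V^*$, and the supertrace doubles, which produces the factors $2l_U\dim V$. The center $z$ contributes $\frac{1}{\kapl(z,z)}$ times the square of its eigenvalue. Normalize $z$ to act by $\pm1$ on the two summands. Then $\kapl(z,z)=-2\dim U\dim V$, because $z$ has no even action and $\mathrm{str}$ gives $-2\dim U\dim V$. This produces the correction term $-\frac{1}{2\dim U\dim V}$.

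\emph{Converse and injectivity.} Given a pair with $\Est=\frac12$ (resp.\ $\Eso=\frac12$), I would use Kostant's criterion quoted above: $\Lz\oplus\Lo$ is a Lie superalgebra precisely when $\nu^*(\Omega_{\Lz})$ is constant. This requires choosing the invariant form on $\fg\oplus\fh$ and the $\Lz$-invariant symplectic form on $\Lo$. Type 2 needs one of $U,V$ orthogonal and the other symplectic; type 1 uses the natural pairing. These choices must be made so that the Kostant map is normalized compatibly with the Killing-form computation. The equation then says precisely that $\Omega$ acts by the scalar required for the Jacobi identity on $\Lo\times\Lo\times\Lo$. Alternatively, and more safely, I would solve the equations directly. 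Use tables of $C_U$, $l_U=\dim U\cdot C_U/\dim\fg$, and the finitely many small representations. Then check that the solutions are exactly the $\Lz$-modules $\Lo$ of $B(m,n)$, $D(m,n)$ ($m\neq n+1$), $C(n)$, $A(m,n)$ ($m\neq n$), $F(4)$, $G(3)$ and $D(2,1;\alpha)$. Injectivity holds because Kac's classification says a basic classical $L$ is determined by $(\Lz,\Lo)$.

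\emph{Main obstacle.} I expect the hard step to be the converse: showing the equation has no spurious solutions. Large representations make $1-l_U\dim V$ very negative, so the index is small, which gives finiteness. The delicate region is the medium-sized cases. I would first bound $\dim U\dim V$ using $C_U<1$-type estimates, and then enumerate the remaining cases by computer-style case analysis against the Kac list.
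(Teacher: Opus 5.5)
\textbf{Where you agree with the paper.} Your forward direction is the paper's argument. The paper also writes $\Omega_{\Lz}$ as in (\ref{Omega1}) and (\ref{Omega3}), reads off $\Omega_{\Lz}|_{\Lo}=\Est$ (resp.\ $\Eso$), and applies Theorem \ref{main12}. Your type-1 bookkeeping is correct. Indeed $\kapl(z,z)=-2\dim U\dim V$; the paper's text states the reciprocal, but its coefficient in (\ref{Omega3}) agrees with yours. The paper's converse is likewise a direct solution of the equations against Table \ref{Index}, so your ``safer'' route is its route.

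\textbf{The gap: reducing the converse to a finite enumeration.} Your proposed mechanism cannot work. No finiteness statement and no bound on $\dim U\dim V$ can hold, because $B(m,n)$ and $D(m,n)$ give infinitely many solutions with $\dim U\dim V$ unbounded. Moreover, the two summands of $\Est$ can be individually large with opposite signs. For $B(n,n)$ they are $-n$ and $n+\frac12$, because $1-l_U\dim V$ is close to $0$, not very negative.

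What is missing is the reduction to Dynkin index at most $1$, which is the paper's Proposition \ref{prop1}:
\begin{itemize}
\item Suppose $l_U>1$. Lemma \ref{lem} gives $\dim U>\dim\fg$, so in the $U$-summand $\delta(U)/(r(U)-\dim V)$ we have $\delta(U)=\dim\fg/\dim U<1$ and $r(U)=1/l_U<1$. Hence this summand is negative, with absolute value less than $1/(\dim V-1)$.
\item The $V$-summand must then be positive, which forces $1/l_V>\dim U\ge 4$. So $V$ is a standard representation of a classical algebra or $\mathrm{spin}_7$, and each of these cases is excluded by an explicit inequality.
\item The adjoint case $l_U=1$ leaves only $(A_1,\mathrm{adj};A_1,\bc^2)$ and $(A_1,\mathrm{adj};C_n,\bc^{2n})$, both of which are $B(1,n)$ (using $C_1=A_1$).
\end{itemize}
Only after this are $U$ and $V$ confined to the finitely many families of Table \ref{Index}. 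Then $\Est=\frac12$ becomes finitely many explicit equations in the ranks. For example, for $\mathfrak{so}_p\oplus\mathfrak{so}_q$ acting on $\bc^p\otimes\bc^q$ it reduces to $p+q=0$. The analogous reduction for $\Eso$ gives $l_V<\frac12$ and $l_U<1$, up to swapping.

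\textbf{Two further corrections.}

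First, $D(2,1;\alpha)$ does not belong on your target list. Its Killing form vanishes, and correspondingly $1-l_U\dim V=0$, so $\Est$ is undefined there. If it were included, your injectivity argument would fail, since all $D(2,1;\alpha)$ share the same pair $(\Lz,\Lo)$. Once it is removed, ``$L$ is determined by $(\Lz,\Lo)$'' is fine.

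Second, the Kostant route does not work as sketched. Kostant's criterion asks that $\nu^*(\Omega_{\Lz})$ be a scalar in $\mathcal{W}(\Lo)$, i.e.\ that its quartic component vanish. The scalar by which $\Omega_{\Lz}$ acts on $\Lo$ exists by Schur's lemma regardless, and it only determines the constant term of $\nu^*(\Omega_{\Lz})$. So $\Est=\frac12$ does not by itself yield the Jacobi identity. The bijection has to come from matching the solution list with Kac's list, as in the paper.
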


Theorems \ref{main12} and   \ref{main4}  constitute the super analogue of Panyushev's result on involutions of Lie algebras \cite{Pan}, but with an important new feature: the constant $\KKs = 1/2$ is universal both for all contragredient Lie superalgebras with non-degenerate Killing form and for Lie algebra decompositions with respect to involutions. Furthermore,  the superadmissibility index equations $\Est=\frac{1}{2}$ and $\Eso = \frac{1}{2}$  completely characterize the classification of Lie superalgebras. 
This provides an invariant-theoretic reconstruction of the classification of contragredient Lie superalgebras with non-degenerate Killing form, recovering the complete list from a single numerical condition--the superadmissibility index attaining the value $\frac{1}{2}$.

The remainder of this article is organized as follows. Section 2 recalls preliminary results that will be used throughout the paper. In Section 3, we prove Theorems \ref{main1} and \ref{main2}. In Section 4, we prove Theorem \ref{main}. Section 5 is devoted to the proof of Theorem \ref{mainformula}. Finally, in Section 6, we prove Theorem \ref{main12} and Corollary \ref{KCos}, and also establish Theorem \ref{main4} concerning the superadmissibility index.  

\bigskip

\noindent {\bf Acknowledgements. }
M. Zhang is supported by the
National Natural Science Foundation of China (Grant No. 11901588).
F. Zhu is supported by the National Natural Science Foundation of China (Grant No. 11571182).
The authors thank Meng-Kiat Chuah and Huibin Chen for very helpful suggestions.


\section{Preliminary}

In this section, we will recall some preliminary results, which will be used in the following sections.

\subsubsection*{Dynkin index}
Let $\fg$ be a complex simple Lie algebra. Let $(\rho, U)$ be an irreducible representation of $\fg$. Let $\kappa_\fg$ be the  Killing form of $\fg$. 
The \emph{Dynkin index} of $U$ is defined by
\[
\mathrm{tr}_U(\rho(X)\rho(Y)) = l_U \cdot \kappa_\fg(X, Y).
\]
The index of a direct sum of representations is the sum of the indices
of the representations.
 
The index of a one-dimensional representation of a simple Lie algebra $\mathfrak{g}$
is $0$, and that for irreducible representation is $1$ only of the adjoint representation.
The  Dynkin indices with $0<l_U<1$ for simple Lie algebras can be found in \cite{AVE} or \cite[Table III]{ka77}. For the readers' convenience, we also provide them  in Table \ref{Index}. 

\begin{table}[ht] 
\begin{tabular}{lllclll}
\hline
Type & & $r$ & Diagram & $\dim U$ & $l_U$ &  $C_U$ \\
\hline
$A$ & $\mathfrak{sl}_n$ & $n-1$ & $\dynkin[mark=o, labels={1}]{A}{oo.oo}$ & $n$ & $\frac{1}{2n}$ & $\frac{n^2-1}{2n^2}$ \\[6pt]
 & $\Lambda^2\mathfrak{sl}_n$ & $n-1$ &  $\dynkin[mark=o, labels={,1}]{A}{oo.oo}$ & $\frac{n(n-1)}{2}$ & $\frac{n-2}{2n}$ & $\frac{(n-2)(n+1)}{n^2}$ \\[6pt]
 & $S^2\mathfrak{sl}_n$ & $n-1$ &  $\dynkin[mark=o, labels={2}]{A}{oo.oo}$ & $\frac{n(n+1)}{2}$ & $\frac{n+2}{2n}$& $\frac{(n-1)(n+2)}{n^2}$ \\[6pt]
 & $\Lambda^5\mathfrak{sl}_n,\ n=6,7,8$ & $5,6,7$ &  $\dynkin[mark=o, labels={,,1}]{A}{ooo.o}$& $20,35,56$ & $\frac{1}{2},\frac{5}{7},\frac{15}{16} $ & 
$\frac{7}{8}, \frac{48}{49}, \frac{135}{128}$ \\[6pt]
\hline
$B$ & $\mathfrak{so}_n$ & $\frac{n-1}{2}$ &   $\dynkin[mark=o, labels={ 1}]{B}{oo.oo}$ & $n$ & $\frac{1}{n-2}$ & $\frac{n-1}{2(n-2)}$\\[6pt]
 & $\mathrm{spin}_n,\ n=7,9,11,13$ & $3,4,5,6$ &  $\dynkin[mark=o, labels={,,,1}]{B}{oo.oo}$ & $8,16,32,64$ & $\frac{1}{5}, \frac{2}{7},\frac{4}{9},\frac{8}{11}$  & $\frac{21}{40}, \frac{9}{14}, \frac{55}{72}, \frac{39}{44}$ \\[6pt]
\hline
$C$ & $\mathfrak{sp}_n$ & $\frac{n}{2}$ &  $\dynkin[mark=o, labels={ 1}]{C}{oo.oo}$ & $n$ & $\frac{1}{n+2}$ & $\frac{n+1}{2(n+2)}$ \\[6pt]
 & $\Lambda_0^5\mathfrak{sp}_n$ & $\frac{n}{2}$ &  $\dynkin[mark=o, labels={ ,1}]{B}{oo.oo}$ & $\frac{n(n-1)}{2}-1$ & $\frac{n-2}{n+2}$ & $\frac{n}{n+2}$ \\[6pt]
 & $\Lambda_0^5\mathfrak{sp}_3$ & $3$ &  $\dynkin[mark=o, labels={,,1}]{C}{3}$ & $14$ & $\frac{5}{8}$ & $\frac{15}{16}$ \\[6pt]
\hline
$D$ & $\mathfrak{so}_n$ & $\frac{n}{2}$ & $\dynkin[mark=o, labels={ 1}]{D}{oo.ooo}$& $n$ & $\frac{1}{ n-2}$ & $\frac{n-1}{2(n-2)}$  \\[6pt]
 & $\mathrm{spin}_n,\ n=10,12,14$ & $5,6,7$ & $\dynkin[mark=o, labels={,,,, 1}]{D}{oo.ooo}$ & $16,32,64$ & $\frac{1}{4},\frac{2}{5},\frac{2}{3}$ & $\frac{45}{64},\frac{33}{40},\frac{91}{96}$\\[6pt]
\hline
$E$ & $E_6$ & $6$ & $\dynkin[mark=o, labels={1}]{E}{6}$ & $27$ & $\frac{1}{4}$ & $\frac{13}{18}$\\[6pt]
 & $E_7$ & $7$ & $\dynkin[mark=o, labels={,,,,,,1}]{E}{7}$ & $56$ & $\frac{1}{3}$ & $\frac{19}{24}$\\[6pt]
\hline 
$F$ & $F_4$ & $4$ & $\dynkin[mark=o, labels={1}]{F}{4}$ & $26$ & $\frac{1}{3}$& $\frac{2}{3}$ \\[6pt]
\hline
$G$ & $G_2$ & $2$ & $\dynkin[mark=o, labels={ 1}]{G}{2}$ & $7$ & $\frac{1}{4}$& $\frac{1}{2}$ \\[6pt]
\hline
\end{tabular}
\caption{Dynkin index less than 1 and Casimir constant}
\label{Index}
\end{table}

\subsubsection*{Coxeter number and dual Coxeter number} 

For a finite irreducible root system $\Phi$ associated to a complex simple Lie algebra $\mathfrak{g}$, 
the Coxeter number $h$ is defined by any of the following equivalent characterizations:
\begin{itemize}
\item[(1)] $h = \frac{|\Phi|}{r}$, where $|\Phi|$ is the total number of roots, and $r$ is the rank of the root system.
\item[(2)]  The order of the Coxeter element (product of all simple reflections in the Weyl group) of the root system.
\end{itemize}

The dual Coxeter number $h^\vee$  of $\mathfrak{g}$ is defined as 
\[
h^\vee =1+(\rho, \theta^\vee)=1 + \frac{1}{(\theta, \theta)}\sum_{\alpha \in \Phi^+} (\theta, \alpha).
\] 
For complex  simple Lie algebras, they can be found in \cite[Table I]{KW}.

The dual Coxeter number $h^\vee$  of a basic Lie superalgebra is defined by the generalization of the Freudenthal--de Vries strange formula:
\[
\langle\rho, \rho\rangle = \frac{h^\vee}{24} (\dim \Lz-\dim\Lo),
\]
where $\langle \cdot, \cdot\rangle $ is the normalized bilinear form on $\mathfrak{h}^*$ such that the longest even roots have squared length $2$.
Dual Coxeter numbers for Lie superalgebras are listed in Table \ref{tab:dualcoxeter} (\cite[Table 1]{Peng}).  
Note that the sign choice in the definition of the map $J$ (Theorem \ref{main}) is inconsequential. Hence, for the dual Coxeter number, we do not always insist on its positivity. 
A minus sign may be added, or even multiplied by $\frac{1}{2}$, thereby obtaining the definition of the signed dual Coxeter number  $h^\vee_{\rm{sign}}$, which is listed in 
the last column of Table \ref{tab:dualcoxeter}. 

\begin{table}[ht]
\centering
\begin{tabular}{lccc}
\hline
Lie superalgebra $\mathfrak{g}$ & Standard notation &   $h^\vee$ & $h^\vee_{\rm{sign}}$ \\
\hline
$A(m,n)$ & $\mathfrak{sl}(m+1,n+1)$ &$  | m-n | $ & $m-n$  \\  
$B(m,n)$ & $\mathfrak{osp}(2m+1,2n)$ & {$\left\{\begin{array}{ll} 2(m-n)-1 & m>n \\ 
n-m+\frac{1}{2} & m\leq n\end{array}\right. 
$} & $ m-n-\frac{1}{2}$ \\
$B(0,n)$ & $\mathfrak{osp}(2,2n)$ & $\frac{2n+1}{2}$ & $\frac{2n+1}{2}$ \\
$C(n)$ & $\mathfrak{osp}(2,2n-2)$ &  $n-1$ & $n-1$ \\
$D(m,n)$ & $\mathfrak{osp}(2m,2n)$ &  {$\left\{\begin{array}{ll} 2(m-n-1) & m>n \\ 
n-m+1 & m \leq n\end{array}\right. 
$} & $ m-n-1 $ \\
$D(2,1;\alpha)$ & exceptional family & $0$& $0$ \\
$F(4)$ & exceptional & $3$ & $3$ \\
$G(3)$ & exceptional & $2$  & $2$ \\
\hline
\end{tabular}
\caption{Dual Coxeter numbers of  Lie superalgebras }
\label{tab:dualcoxeter}
\end{table}

\subsection*{Fundamental weight}
Let $\Delta = \{\alpha_1,\dots,\alpha_r\}$ be a  simple roots of $\fg$.
The \emph{fundamental weights}
$\omega_1,\dots, \omega_r \in \mathfrak{h}^*$ are defined by the duality
condition
\[
( \omega_i,\, \alpha_j^\vee ) = \delta_{ij},
\qquad
\alpha_j^\vee = \frac{2\alpha_j}{(\alpha_j,\alpha_j)},
\qquad
1\le i,j \le r,
\]
where $(\cdot, \cdot)$ is the inner product  on $\mathfrak{h}^*$ induced by the Killing form, and $\delta_{ij}$ is the Kronecker delta.  
Every finite-dimensional irreducible representation of $\mathfrak{g}$ is
uniquely determined by its highest weight 
$\lambda = \sum_{i=1}^r m_i \omega_i$, where each $m_i$ is a
non-negative integer.   
The information about the fundamental weights for complex  simple Lie algebras can be found in 
\cite[Table 1]{Hum}. 

If $\fg$ is a classical Lie algebra, its standard representation $V$ has highest weight $\omega_1$, the first fundamental weight. 
We also represent $V_{\omega_1}$ by $\bc^{\dim V}$.
For example, when $\fg=B_{m}$, we write its standard representation $V_{\omega_1}$ as $\bc^{2m+1}$.
Similarly, we write $\bc^{2n}$  for the standard representation of $D_n$.  
The reason is that when we sometimes consider the semisimple Lie algebra such as $\fg=B_m\oplus D_n$, the tensor product of the standard representations of $B_m$  and $D_n$  yields an irreducible representation of $\fg$. In this context, $\bc^{2m+1} \otimes \bc^{2n}$  is a more convenient notation than $V_{\omega_1} \otimes V_{\omega_1}$.

\subsubsection*{Weyl vector and its norm}

For a finite-dimensional simple complex Lie algebra $\mathfrak{g}$ with root system $\Delta$, choose a set of positive roots $\Delta^+$. The \emph{Weyl vector} (or half-sum of positive roots) is defined as 
$\rho = \frac{1}{2} \sum_{\alpha \in \Delta^+} \alpha$.
An important formula for calculating the norm of $\rho$ is the \emph{Freudenthal--de Vries strange formula}:
\begin{theorem}[Freudenthal--de Vries]
Let $h^\vee$ be the dual Coxeter number of $\mathfrak{g}$ and $\dim(\mathfrak{g})$ its dimension. Under the normalization where the squared length of long roots is $2$, we have:
$$
\|\rho\|^2 = \frac{1}{24} h^\vee \dim(\mathfrak{g})
$$
\end{theorem}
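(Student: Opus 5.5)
The plan is to prove the normalization-free form of the identity first, namely the strange formula for the Killing form $\kapg$,
\[
(\rho,\rho)_{\kapg}=\frac{\dim\fg}{24},
\]
and then deduce the displayed statement by rescaling the invariant form. The rescaling step is where the stated normalization has to be matched exactly, and I expect it to be the main obstacle, for the reason explained in the last paragraph.

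For the Killing-form identity I would use Weyl's denominator formula together with the Weyl dimension formula, following Freudenthal--de Vries. Write $\mathfrak h^*$ with the form induced by $\kapg$.
\begin{itemize}
\item The Casimir element acts on the irreducible module $V_\lambda$ by the scalar $(\lambda,\lambda+2\rho)_{\kapg}$. On $\fg$ itself this scalar is $1$, so $(\theta,\theta+2\rho)_{\kapg}=1$.
\item Consider the formal character $\chi(t)=\mathrm{tr}_{\fg}\exp(t\,\ad h_\rho)$, where $h_\rho$ corresponds to $\rho$ under $\kapg$, so that $\alpha(h_\rho)=(\rho,\alpha)_{\kapg}$.
\item On one hand, expanding to second order gives
\[
\chi(t)=\dim\fg+\frac{t^2}{2}\,\kapg(h_\rho,h_\rho)+O(t^4)=\dim\fg+\frac{t^2}{2}(\rho,\rho)_{\kapg}+O(t^4).
\]
\item On the other hand, $\chi(t)=r+\sum_{\alpha\in\Delta}e^{t(\rho,\alpha)}$. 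Applying the denominator identity $\prod_{\alpha>0}(e^{\alpha/2}-e^{-\alpha/2})=\sum_{w}\varepsilon(w)e^{w\rho}$ to the adjoint module, together with Weyl's formula $\dim V_\lambda=\prod_{\alpha>0}(\lambda+\rho,\alpha)/(\rho,\alpha)$ and its $q$-analogue, expresses this character as a ratio of products of $\sinh$ terms.
\item Comparing $t^2$-coefficients, the coefficient of $\sinh$ contributes $\frac1{24}\sum_{\alpha\in\Delta}(\rho,\alpha)^2_{\kapg}=\frac1{12}(\rho,\rho)_{\kapg}$. Here I use $\sum_{\alpha\in\Delta}\alpha\otimes\alpha=\frac{1}{?}\cdot$(dual of $\kapg$ on $\mathfrak h$), which follows from $\kapg(h,h)=\sum_\alpha\alpha(h)^2$.
\item Solving the resulting linear equation gives $(\rho,\rho)_{\kapg}=\dim\fg/24$.
\end{itemize}
This part is standard, and I would not grind through it.

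The rescaling step goes as follows. Let $(\cdot,\cdot)=c\,\kapg^{-1}$ be the normalized form on $\mathfrak h^*$. Then $\|\rho\|^2=c\,(\rho,\rho)_{\kapg}=c\dim\fg/24$, so the statement $\|\rho\|^2=h^\vee\dim\fg/24$ is equivalent to the single claim $c=h^\vee$. To determine $c$ I would use $h^\vee=1+(\rho,\theta^\vee)$ from the Preliminary section together with the Casimir identity $(\theta,\theta+2\rho)_{\kapg}=1$. Together these give
\[
(\theta,\theta)_{\kapg}=\frac{1}{2h^\vee}\cdot 2=\frac{1}{h^\vee}\cdot\frac{(\theta,\theta)}{2}\Big/\frac{(\theta,\theta)}{2}.
\]
Concretely, $(\theta,\theta)_{\kapg}\,(1+(\rho,\theta^\vee))\cdot\frac12\cdot 2=1$, i.e. $(\theta,\theta)_{\kapg}=1/(2h^\vee)$, so $c=(\theta,\theta)\cdot 2h^\vee/2\cdot\frac{1}{1}$, determined by the value assigned to $(\theta,\theta)$.

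This is the crux. The stated formula holds exactly when the normalized form is $h^\vee\kapg$ on $\mathfrak h^*$ in this dual sense, i.e. $(\theta,\theta)=1$ in the induced normalization. If literally $(\theta,\theta)=2$, the same computation yields a factor $2$, and a check on $\mathfrak{sl}_2$ is the way to settle which reading is intended. That check gives $\|\rho\|^2=\tfrac12$ under $(\alpha,\alpha)=2$, whereas $h^\vee\dim\fg/24=\tfrac14$, so the formula as written is consistent only under the convention $(\theta,\theta)=1$. I would therefore state the proof as: strange formula for $\kapg$, followed by the rescaling $c=h^\vee$, with this normalization check made explicit. I would not silently replace the stated identity.
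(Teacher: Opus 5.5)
Your proposal is correct, and its central point is right: the statement as printed is false. With the stated normalization $(\theta,\theta)=2$ one has $\|\rho\|^2=\frac{h^\vee}{12}\dim\fg$, equivalently $\|\rho\|^2/(2h^\vee)=\dim\fg/24$. The printed constant $\frac{1}{24}$ is correct only for $(\theta,\theta)=1$. Your $\mathfrak{sl}_2$ check ($\frac12$ versus $\frac{1}{24}\cdot 2\cdot 3=\frac14$) is a genuine counterexample to the displayed formula.

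The paper gives no proof; it only quotes Freudenthal--de Vries, so there is no route to compare with. Its own conventions support your diagnosis. The Casimir subsection states $(\alpha,\beta)=\frac{1}{2h^\vee}\langle\alpha,\beta\rangle$ with $\langle\theta,\theta\rangle=2$. Combined with $(\rho,\rho)_{\kapg}=\dim\fg/24$, this gives $\frac{1}{12}$. Moreover, the entries of Table~\ref{Weylvec} for $A_n$, $B_n$, $D_n$, $E_6$, $F_4$ equal $h^\vee\dim\fg/12$, not $h^\vee\dim\fg/24$. Your route is the standard one: prove the strange formula for $\kapg$ via the $q$-dimension formula, then rescale by $c=(\theta,\theta)\,h^\vee$, which gives $c=2h^\vee$ here.

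Two constants in your write-up are wrong, although neither affects your conclusion.

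\textbf{The value of $(\theta,\theta)_{\kapg}$.} In the rescaling step the Casimir identity reads
\[
(\theta,\theta+2\rho)_{\kapg}=(\theta,\theta)_{\kapg}\bigl(1+(\rho,\theta^\vee)\bigr)=h^\vee(\theta,\theta)_{\kapg}=1 .
\]
So $(\theta,\theta)_{\kapg}=1/h^\vee$, not $1/(2h^\vee)$. Only $1/h^\vee$ is consistent with your final, correct value $c=(\theta,\theta)\,h^\vee$.

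\textbf{The root-sum coefficient.} In the Killing-form sketch, $\kapg(h,h)=\sum_{\alpha\in\Delta}\alpha(h)^2$ says exactly $\sum_{\alpha\in\Delta}(\mu,\alpha)_{\kapg}^2=(\mu,\mu)_{\kapg}$, with coefficient $1$. Hence $\frac{1}{24}\sum_{\alpha\in\Delta}(\rho,\alpha)^2_{\kapg}=\frac{1}{24}(\rho,\rho)_{\kapg}$, not $\frac{1}{12}(\rho,\rho)_{\kapg}$. You also need to keep both the $\theta+\rho$ and the $\rho$ factors, with the product over positive roots only. Take logarithms of
\[
\mathrm{tr}_{\fg}\,e^{t\,\ad h_\rho}=\prod_{\alpha>0}\frac{\sinh\bigl(\tfrac{t}{2}(\theta+\rho,\alpha)_{\kapg}\bigr)}{\sinh\bigl(\tfrac{t}{2}(\rho,\alpha)_{\kapg}\bigr)}
\]
and use $\log\sinh x=\log x+x^2/6+O(x^4)$. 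Comparing $t^2$-coefficients gives
\[
\frac{(\rho,\rho)_{\kapg}}{2\dim\fg}=\frac{1}{24}\sum_{\alpha>0}\bigl[(\theta+\rho,\alpha)_{\kapg}^2-(\rho,\alpha)_{\kapg}^2\bigr]=\frac{1}{48}(\theta,\theta+2\rho)_{\kapg}=\frac{1}{48},
\]
which yields $(\rho,\rho)_{\kapg}=\dim\fg/24$ as intended.
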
 
The following table presents the Weyl vector and its norm for all irreducible root systems, with $\omega_i$ denoting the fundamental dominant weights:
 
\begin{table}[ht] {\footnotesize 
\centering
\renewcommand{\arraystretch}{1.5}
\setlength{\tabcolsep}{6pt}
\begin{tabular}{lccccc}
\hline
Type & Rank & dimension &Coxeter number& $\rho$ (fundamental weight basis) & $\|\rho\|^2$  \\
\hline
$A_n$   & $n$ & $n(n+2)$ & $n+1$ & $\omega_1 + \omega_2 + \cdots + \omega_n$ & $\frac{n(n+1)(n+2)}{12}$ \\
$B_n$  & $n$ & $n(2n+1)$ & $2n-1$ &  $\omega_1 + 2\omega_2 + \cdots + n\omega_n$ & $\frac{n(4n^2 - 1)}{12}$ \\
$C_n$   & $n$ & $n(2n+1)$ &  $n+1$ & $\omega_1 + 2\omega_2 + \cdots + n\omega_n$ & $\frac{n(n+1)(2n+1)}{6}$ \\
$D_n$   & $n$ & $n(2n-1)$ & $2n-2$& $\omega_1 + 2\omega_2 + \cdots + (n-2)\omega_{n-2} + \frac{n}{2}\omega_{n-1} + \frac{n}{2}\omega_n$ & $\frac{n(n-1)(2n-1)}{6}$ \\
$E_6$ & 6 & 78 &12&  $\omega_1 + 2\omega_2 + 3\omega_3 + 2\omega_4 + \omega_5 + 2\omega_6$ & 78  \\
$E_7$ & 7 &133 & 18&$2\omega_1 + 3\omega_2 + 4\omega_3 + 3\omega_4 + 2\omega_5 + \omega_6 + 2\omega_7$ & 133 \\
$E_8$ & 8 & 248 &30& $3\omega_1 + 4\omega_2 + 5\omega_3 + 6\omega_4 + 4\omega_5 + 2\omega_6 + 3\omega_7 + 2\omega_8$ & 248  \\
$F_4$ & 4 & 52 &9& $2\omega_1 + 3\omega_2 + 4\omega_3 + 2\omega_4$ & 39 \\
$G_2$ & 2 &14 &4& $\omega_1 + 2\omega_2$ & 14  \\
\hline
\end{tabular} }
\caption{Weyl vectors and their norms for all simple Lie algebras}
\label{Weylvec}
\end{table}

\subsection*{Casimir constant}

Let $U$ be an irreducible representation of $\fg$ with highest weight $\lambda$. Then by Schur's Lemma, the Casimir element $\Omega_{\fg}$ of $\fg$  
acts on $U$ by the scalar
\[
 {\;C_U = (\lambda, \lambda) + 2(\lambda, \rho)\;}.
\]  
 
Let $\langle\cdot,\cdot\rangle$ be the standard normalized form on $\fh^*$ with $\langle\phi,\phi\rangle = 2$ (where $\phi$ is the highest root). The relationship is
\[
(\alpha, \beta) = \frac{1}{2h^\vee} \, \langle\alpha, \beta\rangle,
\]
where $h^\vee$ is the {dual Coxeter number}. Therefore
\[
C_U = \frac{1}{2h^\vee}\bigl(\langle\lambda,\lambda\rangle + 2\langle\lambda,\rho\rangle\bigr).
\]
The  Casimir constants of the irreducible representations with $0<l_U<1$ for simple Lie algebras are listed in the last column of Table \ref{Index}.
 
\begin{example}[Casimir constant for $B_3$  on the spin representation]
Let $\mathfrak{g} = \mathfrak{so}_7$ be the Lie algebra of type $B_3$. The spin representation $U$ is the irreducible representation with highest weight
\[
 \omega_3 = \frac{\varepsilon_1 + \varepsilon_2 + \varepsilon_3}{2},
\]
where $\{\varepsilon_i\}$ is an orthonormal basis of $\mathfrak{h}^*$ with respect to the normalized form $\langle\cdot,\cdot\rangle$. 
About this representation, we have the following data   
\[
\dim \fg=21, ~~ \dim U=8, ~~ h^\vee=5, ~~ \rho=\frac{5}{2}\varepsilon_1+\frac{3}{2} \varepsilon_2+\frac{1}{2} \varepsilon_1.
\]
Then this implies 
\[
C_U= \frac{1}{2h^\vee}\bigl(\langle\omega_3, \omega_3\rangle + 2\langle\omega_3, \rho\rangle\bigr)=\frac{1}{2\cdot 5}\cdot (\frac{3}{4}+2\cdot \frac{9}{4})=\frac{21}{40}.
\]
By $\displaystyle C_U=\frac{l_U\cdot \dim \fg}{\dim U}$, a direct computation yields $\displaystyle l_U=\frac{1}{5}$.  
\end{example}

\subsection*{Automorphism of Lie algebra }

Let $\fg$ be a complex simple Lie algebra with Dynkin diagram $\dy$. A diagram automorphism on $\dy$ of order $r$ leads to the affine Dynkin diagram $\dy^r$.
By assigning integers to the vertices, Kac  uses $\dy^r$ to represent finite order $\fg$-automorphisms.
His result first appears as a brief announcement
in \cite{Kac69} and later as a detailed exposition in \cite[Ch. X-5]{Hel} and \cite[Ch. 8]{ka90}.

Let $\tilde{\fh}$ be a Cartan subalgebra of $\fg$.  A simple system
$\Pi_\fg$ of $\Delta(\fg,\tilde{\fh})$ leads to the Dynkin diagram $\dy$ whose vertices represent $\Pi_\fg$.
Consider a diagram symmetry $d$ on $\dy$ of order $r \in\{1, 2\}$ (we ignore $r = 3$ since we mainly consider order 4 automorphisms).
It lifts to an $\tilde{\fh}$-stable $\fg$-automorphism of order $r$ whose eigenspaces
form a $\bz_r$-grading $\fg =\sum_{i\in \bz_r}\fg^i$,  and $\fh = \tilde{\fh}\cap  \fg^{\bar{0}}$ is a Cartan subalgebra of $\fg^{\bar{0}}$ .
Let $\Pi_{\bar{0}}\subset \Delta(\fg^{\bar{0}}, \fh)$  be the simple system and root system of $\fg^{\bar{0}}$, obtained by restricting
$\Pi_\fg \subset \Delta(\fg,\tilde{\fh})$ to $\fh$. Construct $\dy^r$ by letting its vertices be
\[
\dy^r = \Pi_{\bar{0}}\cup \{\mbox{lowest weight of the adjoint $\fg^{\bar{0}}$-representation on $\fg^{\bar{1}}$}\}.
\]
Also, adjacent vertices of $\dy^r$ correspond to the simple rank two subalgebra indicated
by the subdiagram (except for $A^1_1$). For $r = 1$, we have $\fg = \fg^{\bar{0}} = \fg^{\bar{1}}$ and $\fh = \tilde{\fh}$,
so $\dy^1$ is just the extended Dynkin diagram $\Pi_\fg \cup  \{\mbox{lowest root}\}$. For $r = 2$, the
subdiagram $\Pi_{\bar{0}}\subset  \dy^2$ consists of the $d$-orbits of vertices of $\Pi_\fg$, and the edges of $\Pi_{\bar{0}}$
are drawn according to the root lengths
\[
\mbox{adjacent 2-element $d$-orbit  $ <$ nonadjacent 2-element $d$-orbit
$<$ vertices fixed by $d$.} \]
In each case of $\dy^r$, we have added one extra vertex to $\Pi$, so $\#(\dy^r) = \dim \fh + 1$.
Hence there is a nontrivial linear combination
$\sum_{\dy^r} a_\alpha \alpha = 0$, and the coefficients
$\{a_\alpha\}_{\dy^r}$ are the unique ones which are positive integers without nontrivial common factor.
About the affine diagram $\dy^r$ and $a_\al\in\dy^r$, see \cite[Ch.X--5]{Hel}, \cite[Ch.4]{ka90}  or \cite[p.1726]{ch}.

\begin{theorem}[Kac]
Let $\{n_\alpha\}_{\dy^r}$ be nonnegative integers without nontrivial common
factor, let $m = r\sum_{\dy^r}a_\alpha n_\alpha$,  and let $\displaystyle c_\alpha = (\exp \frac{2\pi i}{m}) n_\alpha$.
Fix $\{X_\alpha\}_{\dy^r}$ from the simple root vectors of $\fg^{\bar{0}}$ and a lowest root vector of $\fg^{\bar{1}}$. Then $\{X_\alpha\}_{\dy^r}$ generates $\fg$.

\noindent{(1)} The data $\{c_\alpha\}_{\dy^r}$ represents a unique $\fg$-automorphism $\sigma$ of order $m$ by $\sigma X_\alpha =
c_\alpha X_\alpha$ for all $\alpha\in \dy^r$.

\noindent{(2)} The fixed point set of $\si$ is a reductive algebra $\fg^\si = [\fg^\si, \fg^\si]+Z(\fg^\si)$. The Dynkin
diagram of the semisimple subalgebra $[\fg^\si, \fg^\si]$ is the subdiagram $\{\alpha \in\dy^r;  c_\alpha = 1\}$.
The dimension of the center $Z(\fg^\si)$ is $\#\{\alpha\in\dy^r; c_\alpha \neq 1\}-1$.

\noindent{(3)} Every finite order $\fg$-automorphism can be represented uniquely in this way.
\label{Kac}
\end{theorem}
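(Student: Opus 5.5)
The plan is to deduce the theorem from the realization of $\fg$ inside the (twisted) loop algebra $\hat{L}(\fg,d)=\bigoplus_{j\in\bz}\fg^{\bar j}\otimes t^j$, so that finite order automorphisms correspond to $\bz$-gradings of a Kac--Moody algebra. The order of work is: generation and existence (1), then the fixed points (2), and finally the uniqueness statement (3), which is the real content.

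\textbf{Generation and (1).} For generation, I would note that $\{X_\alpha\}_{\dy^r}$, together with the corresponding negative root vectors, are the Chevalley generators $e_0,\dots,e_\ell$ of the derived affine algebra attached to $\dy^r$. Evaluating at $t=1$ maps them onto $\fg$. Since $\fg^{\bar 0}$ is generated by its simple root vectors and $\fg^{\bar 1}$ is an irreducible $\fg^{\bar 0}$-module generated by its lowest weight vector, the $X_\alpha$ generate $\fg$.

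For (1), I would define the automorphism by grading. Put $\deg e_\alpha=n_\alpha$ and $\deg f_\alpha=-n_\alpha$. This defines a $\bz$-grading of the affine algebra in which the imaginary root $\delta=\sum a_\alpha\alpha$ has degree $\sum a_\alpha n_\alpha$. Evaluation at $t=1$ with the extra twist $d$ turns this into a $\bz/m$-grading of $\fg$, where $m=r\sum a_\alpha n_\alpha$. The automorphism $\sigma$ is the one acting by $\exp(2\pi i k/m)$ on the degree-$k$ part. Concretely, $\sigma=d\circ\exp(2\pi i\,\ad h)$, where $h\in\fh$ is determined by $\alpha(h)=n_\alpha/m$ for $\alpha\in\Pi_{\bar 0}$. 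Uniqueness of $\sigma$ given the $c_\alpha$ follows from generation. The gcd condition on the $n_\alpha$ makes the order exactly $m$.

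\textbf{(2).} The fixed algebra $\fg^\sigma$ is the image of the degree-$0$ part of the grading. That part is $\fh$ plus the root spaces of affine real roots $\beta=\sum k_\alpha\alpha$ with $\sum k_\alpha n_\alpha\equiv 0\pmod{\delta\text{-degree}}$. Modulo $\delta$, these are exactly the roots supported on $\{\alpha:n_\alpha=0\}$. Since not all $n_\alpha$ vanish, this subdiagram is a proper subdiagram of an affine diagram, hence of finite type. It therefore gives the semisimple part $[\fg^\sigma,\fg^\sigma]$. The center is the part of $\fh$ orthogonal to these roots. Its dimension is
\[
\dim\fh-\#\{\alpha:n_\alpha=0\}=\#\{\alpha:n_\alpha\neq 0\}-1,
\]
using $\#\dy^r=\dim\fh+1$.

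\textbf{(3) and the main obstacle.} This part has two steps, and I expect the first to be the main difficulty.
\begin{itemize}
\item \emph{Normal form.} Given an arbitrary $\sigma$ of finite order, I would produce a $\sigma$-stable Borel subalgebra and Cartan subalgebra $\tilde{\fh}$. For a semisimple automorphism this follows from Borel--Mostow/Steinberg. Then $\sigma$ permutes the simple roots, giving a diagram automorphism $d$. Hence $\sigma=d\exp(2\pi i\,\ad h)$ with $h\in\fh=\tilde\fh^{d}$, after composing with an inner automorphism that fixes $\tilde\fh$ pointwise. Finiteness of the order forces $\alpha(h)$ to be rational.
\item \emph{Alcove reduction.} Next I would move $h$ by the affine Weyl group of $\dy^r$ into the closed fundamental alcove $\{\alpha(h)\ge 0,\ \sum_{\dy^r}a_\alpha\alpha(h)=1/r\}$. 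Writing $\alpha(h)=n_\alpha/m$ gives nonnegative integers with the stated relation. Uniqueness then comes from the fact that the closed alcove is a fundamental domain for the affine Weyl group. The remaining ambiguity, coming from outer components and symmetries of $\dy^r$, is accounted for by the uniqueness statement up to conjugation in $\aut(\fg)$.
\end{itemize}

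I would follow Kac \cite[Ch.~8]{ka90} and \cite[Ch.~X-5]{Hel} for the details of the alcove argument.
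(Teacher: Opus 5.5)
The paper does not prove this theorem. It quotes it from Kac and Helgason, and your outline is the standard argument of those sources: a $\bz$-grading of the twisted loop algebra for (1)--(2), then the normal form $\si=d\exp(2\pi i\,\ad h)$ of Proposition~\ref{prop31} plus an alcove reduction for (3). Parts (1)--(2) are essentially sound. The uniqueness half of (3), however, is asserted rather than proved.

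\textbf{The gap in (3).} That the closed alcove is a fundamental domain only tells you that two alcove points in one affine Weyl orbit coincide. What (3) actually needs is this: if $\si=d\exp(2\pi i\,\ad h)$ and $\si'=d\exp(2\pi i\,\ad h')$, with $h,h'$ in the alcove, satisfy $\si'=\tau\si\tau^{-1}$ for some $\tau\in\aut(\fg)$, then $h'$ lies in the orbit of $h$ under a group whose alcove-preserving elements act by symmetries of $\dy^r$. (The two forms have the same $d$, since the class of $d$ in $\aut(\dy)$ is a conjugacy invariant.) The missing idea is to normalize $\tau$:
\begin{itemize}
\item $\tau$ maps $\fg^{\si}$ onto $\fg^{\si'}$, and by (2) both have $\fh$ as a Cartan subalgebra. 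After composing $\tau$ with some $\exp(\ad y)$, $y\in\fg^{\si'}$ (these commute with $\si'$), you may assume $\tau(\fh)=\fh$.
\item Since $\fh$ contains regular elements, $\tau$ then normalizes $\tilde\fh=z_\fg(\fh)$. It therefore induces an element $u\in W\rtimes\aut(\dy)$ commuting with $d$.
\item Write $\tau$ as a lift of $u$ commuting with $d$, times $\exp(2\pi i\,\ad x)$ with $x\in\tilde\fh$, and use
\[
\exp(2\pi i\,\ad x)\,d\exp(2\pi i\,\ad h)\exp(-2\pi i\,\ad x)=d\exp\bigl(2\pi i\,\ad(h+d^{-1}x-x)\bigr).
\]
Projecting onto $\fh=\tilde\fh^{d}$ gives $h'\in u(h)+\pi(P^\vee)$. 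Here $P^\vee$ is the coweight lattice and $\pi$ is the projection $\tilde\fh\to\fh$.
\item The group of such maps permutes the alcoves of the affine Weyl group of $\dy^r$. Its elements preserving the fundamental alcove act on it by symmetries of $\dy^r$.
\end{itemize}
Only now does the fundamental-domain property give (3). Note that ``uniquely'' can only mean: up to conjugation, with $(n_\alpha)$ determined up to symmetries of $\dy^r$.

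The same displayed identity is also what existence needs. It shows that translations by $\pi(P^\vee)$ act on normal forms by conjugation, as does the subgroup $W^{d}$ of Weyl group elements commuting with $d$, via lifts commuting with $d$. With $x$ in the $d$-non-fixed part of $\tilde\fh$, it also moves $h\in\tilde\fh$ into $\tilde\fh^{d}$. That step is a conjugation, not a composition as you wrote.

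\textbf{Smaller repairs.}

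(i) Generation. The theorem says the $X_\alpha$ \emph{alone} generate $\fg$, and uniqueness in (1) needs exactly this. Your argument also uses the negative root vectors, and the positive simple root vectors do not generate $\fg^{\bar0}$. Instead, use that $\fg^{\bar1}$ (which is $\fg$ when $r=1$) is an irreducible lowest weight module. Hence it is obtained from the lowest weight vector $X_0$ by repeated brackets with the $X_\alpha$, $\alpha\in\Pi_{\bar0}$. For $r=2$, $\fg^{\bar1}+[\fg^{\bar1},\fg^{\bar1}]$ is a nonzero ideal of the simple $\fg$, so $[\fg^{\bar1},\fg^{\bar1}]=\fg^{\bar0}$.

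(ii) The degree condition in (2). $\fg^\si$ is the image of the degree-$0$ part only, not of all degrees $\equiv 0$ modulo $\sum a_\alpha n_\alpha=m/r$. For $r=2$, the pieces of degree $m/2$ go to the $(-1)$-eigenspace. Evaluation is injective on the degree-$0$ part because $\delta$ has nonzero degree. Degree exactly $0$ forces support in $\{n_\alpha=0\}$, because the coefficients of an affine root all have the same sign.

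(iii) The alcove. Since $\sum_{\dy^r}a_\alpha\alpha=0$ on $\fh$, your condition $\sum a_\alpha\alpha(h)=1/r$ cannot hold as written. The extra vertex $\alpha_0$ (the lowest weight of $\fg^{\bar1}$) must be read as the affine function $\alpha_0+\frac1r$, because the lift of $d$ multiplies $X_0$ by $e^{2\pi i/r}$. Then $n_\alpha=m\,\alpha(h)$ for $\alpha\in\Pi_{\bar0}$, and $n_{\alpha_0}=m\bigl(\alpha_0(h)+\frac1r\bigr)$.
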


 \subsection*{Lie superalgebra} 
 Let $L = \Lz + \Lo$ be a complex contragredient Lie superalgebra which is not a Lie
algebra, so it is one of $A(m, n), B(m, n), C(n), D(m, n), D(2, 1; \alpha), F(4)$ and $G(3)$
\cite[\S 2.5]{ka77}. We say that $L$ is of type 1 or 2 by
\[ \begin{array}{ll}
\mbox{Type 1:} &  \mbox{$\Lz$
is reductive and has a 1-dimensional center, }\\
&\mbox{its action on $\Lo$ has two irreducible factors,} \\
&\mbox{that is, $L$ is one of $A(m,n)$, $C(n)$;} \\
\mbox{Type 2:} & \mbox{$\Lz$ 
is semisimple, its action on $\Lo$
is irreducible, }\\
& \mbox{that is, $L$ is one of  $B(m, n), D(m, n), D(2, 1; \alpha), F(4)$ and $G(3)$.
} \end{array}
\]
The  \emph{Killing form} on $L$ is denoted by 
\[
\kapl(X, Y ) = \mathrm{str}(\ad (X), \ad (Y )) , ~~   \forall X, Y \in L. \]
This bilinear form is consistent, supersymmetric
and invariant. 

\begin{proposition}[\cite{ka77}]
\begin{itemize}
\item[(1)]   A finite dimensional contragredient Lie superalgebra $L$ with a nondegenerate Killing form
is of the type $ A(m, n)~ (m \neq n)$, $B(m, n)$, $C(n)$, $D(m, n) ~ (m \neq n+1)$,
$F(4)$ or $G(3)$.
\item[(2)] A  finite dimensional contragredient Lie superalgebra $L$ with a zero Killing form is of the
type $A(n, n), D(n + 1, n)$ or $D(2, 1; \alpha),$.
\end{itemize}
\end{proposition}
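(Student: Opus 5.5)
The statement is a result of Kac \cite{ka77}, so strictly speaking it is quoted, not proved here. Still, I would reconstruct it by computing the Killing form of each contragredient Lie superalgebra directly. Two facts drive the plan. First, on a simple (or almost simple) contragredient $L$, every even supersymmetric invariant form is a scalar multiple of the standard normalized form $\langle\cdot,\cdot\rangle$. The one exception is $D(2,1;\alpha)$, which needs separate care. Second, that scalar can be read off from the action on a single simple component of $\Lz$ or on the Cartan subalgebra.

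Concretely, I would write $\kapl = c_L\langle\cdot,\cdot\rangle$ and compute $c_L$ from the standard relation between the Killing form and the Casimir eigenvalue of the adjoint representation, giving
\[
\kapl(h,h')=\mathrm{str}(\ad h\,\ad h')=\sum_{\alpha\in\Delta_{\bar 0}}\alpha(h)\alpha(h')-\sum_{\beta\in\Delta_{\bar 1}}\beta(h)\beta(h').
\]
Summing over roots in the standard $\ep_i,\delta_j$ coordinates gives $c_L=2h^\vee$ up to sign, with $h^\vee$ the super dual Coxeter number of Table~\ref{tab:dualcoxeter}. Hence $\kapl$ is nondegenerate exactly when $h^\vee\neq 0$. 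A faster route for the series is the matrix realization: for $\mathfrak{sl}(m+1|n+1)$ one gets $\kapl(X,Y)=2(m-n)\,\mathrm{str}(XY)$. For $\mathfrak{osp}(M|2n)$ one gets $\kapl(X,Y)=(M-2n-2)\,\mathrm{str}(XY)$ up to normalization. These vanish precisely for $m=n$ (that is, $A(n,n)$, after passing to the simple quotient) and for $M=2n+2$ (that is, $D(n+1,n)$).

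For the exceptional algebras I would compute the restriction of $\kapl$ to one simple ideal of $\Lz$. This equals the Killing form of that ideal minus the trace form on $\Lo$, namely $(1-l_U\dim V)\kappa$ in the Dynkin-index notation of Table~\ref{Index}. For $F(4)$ with $\Lo=\mathrm{spin}_7\otimes\bc^2$, the factor is $1-\tfrac15\cdot 2\neq 0$. For $G(3)$ with $\Lo=\bc^7\otimes\bc^2$, it is $1-\tfrac14\cdot 2\neq0$. For $D(2,1;\alpha)$ with $\Lo=\bc^2\otimes\bc^2\otimes\bc^2$, each $A_1$ factor gives $1-\tfrac{1}{4}\cdot 4=0$, so the Killing form vanishes identically.

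Once the Killing form vanishes on $\Lz$, invariance and the fact that $[\Lo,\Lo]=\Lz$ force it to vanish on $\Lo$ as well. Part (2) then follows, and part (1) is the complement of this list. I expect the main obstacle to be the bookkeeping of signs and normalizations in the orthosymplectic case, so that the vanishing locus comes out exactly as $D(n+1,n)$ and not, for instance, as $B$-type cases.
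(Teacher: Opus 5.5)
The paper gives no proof of this proposition; it is quoted from Kac \cite{ka77} and used as a black box. Your type-by-type computation of $\kapl$ is correct and is essentially Kac's original argument. It uses $\kapl(X,Y)=2(m-n)\,\mathrm{str}(XY)$ on $\mathfrak{sl}(m+1|n+1)$, $\kapl(X,Y)=(M-2n-2)\,\mathrm{str}(XY)$ on $\mathfrak{osp}(M|2n)$, and the factor $1-l_U\dim V$ on a simple ideal of $\Lz$ for $F(4)$, $G(3)$ and $D(2,1;\alpha)$. Your numbers check out:
\begin{itemize}
\item for $B(m,n)$ the factor $M-2n-2=2m-2n-1$ is odd, hence nonzero;
\item for $C(n)=\mathfrak{osp}(2|2n-2)$ it equals $-2(n-1)\neq 0$;
\item for $\mathfrak{osp}(2m|2n)$ it vanishes exactly when $m=n+1$;
\item the exceptional factors are $3/5$, $1/2$ and $0$ respectively.
\end{itemize}
All of this agrees with $\kapl=2h^\vee\langle\cdot,\cdot\rangle$ and Table~\ref{tab:dualcoxeter}.

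One point you should state explicitly is what drives the dichotomy. The radical of $\kapl$ is an ideal of the simple algebra $L$, so $\kapl$ is either nondegenerate or identically zero. This is also why nonvanishing on a single simple ideal of $\Lz$ already gives nondegeneracy for $F(4)$ and $G(3)$.

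Two small corrections.

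First, in passing from $\kapl|_{\Lz\times\Lz}=0$ to $\kapl|_{\Lo\times\Lo}=0$, invariance gives $\kapl([a,x],y)=\kapl(a,[x,y])=0$ for $a\in\Lz$ and $x,y\in\Lo$. So the fact you need is $[\Lz,\Lo]=\Lo$, not $[\Lo,\Lo]=\Lz$. Alternatively, note that the radical is an ideal containing $\Lz$ and hence equals $L$.

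Second, $D(2,1;\alpha)$ is not an exception to uniqueness of the invariant form. Its even supersymmetric invariant form is also unique up to scalar: the coefficients on the three $\mathfrak{sl}_2$ factors are forced by the bracket parameters. It is special only because there is no canonical normalization across the three factors and because its Killing form vanishes. Your separate treatment of it is therefore fine, but the stated reason is off.
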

In this paper, when we consider the Dirac constant or Kostant constant for Lie superalgebra, we usually assume $L$ to be a contragredient Lie superalgebra with non-degenerate Killing form. 
This means   $L\neq A(n, n), D(n + 1, n)$ or $D(2, 1; \alpha)$.


\section{Generalized symmetric pairs  with symplectic subspace}

Let $\fg$ be a complex semisimple Lie algebra, and let $\sigma$ be an automorphism of $\fg$ of order $n$. Then $\fg$ has a decomposition
\[ \displaystyle \fg=\sum_{k=0}^{n-1}\fg(\omega^k),\]
where $\fg(\omega^k)=\{x\in\fg|\sigma(x)=\omega^k x\}$ and $\displaystyle  \omega=\exp \frac{2\pi i}{m}$ is a primitive $n$th root of unity. By Theorem \ref{Kac}, the following results are standard.

\begin{proposition}
\noindent{(1)} Let $\fh(1)$ be a Cartan subalgebra of $\fg(1)$. Then $z(\fh(1))=\{x\in\fg\mid[x,\fh(1)]=0\}$ is a Cartan subalgebra of $\fg$ (see Lemma 5.3 in  \cite[Chapter X]{Hel}).

\noindent{(2)} There is a regular element $H$ of $\fg$ in $\fh(1)$, which defines a positive root system $\Delta^+$. Since $\sigma(H)=H$, we have $\sigma(\Delta^+)=\Delta^+$, i.e., $\sigma$ is induced by a diagram automorphism $\sigma_0$ of $\fg$.

\noindent{(3)} Any diagram automorphism $\sigma_0$ defines a Lie algebra automorphism, also denoted by $\sigma_0$, and $\sigma_0$ is called a regular automorphism of $\fg$. Note that $\sigma_0$ is not unique.

\noindent{(4)} $\sigma_0^{-1}\sigma$ is an isomorphism which fixes each $H\in\fh$. Therefore, $\sigma_0^{-1}\sigma=\mathrm{e}^{\ad H}$ for some $H\in\fh$. Hence, $\sigma=\sigma_0\mathrm{e}^{\ad H}$.

\noindent{(5)} Set $H=\sum\limits_{i=0}^{n-1}H_i$. Note that $\sigma_0(H_i)=\omega^iH_i$. Then $\sigma_0\mathrm{e}^{\ad H_i}=\mathrm{e}^{\omega^i\ad H_i}\sigma_0$. For $i\neq 0$, set $k=\frac{1}{1-\omega^i}$, we have $\sigma_0\mathrm{e}^{\ad H_i}=\mathrm{e}^{-(1-k)\ad H_i}\sigma_0\mathrm{e}^{(1-k)\ad H_i}$. Replacing the regular automorphism $\sigma_0$ by $\mathrm{e}^{-(1-k)\ad H_i}\sigma_0\mathrm{e}^{(1-k)\ad H_i}$, we have $\sigma=\sigma_0\mathrm{e}^{\ad H_0}$.

In particular, if $\sigma$ is an inner automorphism of order $m$, then $\sigma=\mathrm{e}^{\ad H_0}$ for some $H_0\in\fg(1)$, and $\fg(1)$ contains a Cartan subalgebra of $\fg$.
\label{prop31}
\end{proposition}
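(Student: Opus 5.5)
The plan is to treat the five items in order, each one feeding the next. The substantive step is (1); items (2)--(5) are formal consequences of it, of the standard existence and uniqueness of automorphisms attached to Chevalley generators, and of the commutativity of $\tilde{\fh}$.

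For (1), I would argue as in Lemma 5.3 of \cite[Chapter X]{Hel}.
\begin{itemize}
\item Since $\si$ has finite order, it is semisimple, so $\fgaa=\fg^\si$ is reductive in $\fg$. Hence a Cartan subalgebra $\fh(1)$ of $\fgaa$ consists of elements that are ad-semisimple in $\fg$.
\item The centralizer $\mathfrak c=z(\fh(1))$ is therefore reductive in $\fg$, and it is $\si$-stable because $\si$ fixes $\fh(1)$ pointwise.
\item The fixed part of $\si$ on $\mathfrak c$ is $\mathfrak c\cap\fgaa$, the centralizer of $\fh(1)$ in $\fgaa$, which equals $\fh(1)$.
\item So $\si|_{\mathfrak c}$ is a finite-order automorphism whose fixed-point algebra is abelian and central in $\mathfrak c$. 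By the Borel--Mostow/Jacobson theorem (a Lie algebra admitting a semisimple automorphism with nilpotent fixed points is solvable, indeed nilpotent here), $\mathfrak c$ is nilpotent.
\item Being nilpotent and reductive, $\mathfrak c$ is abelian. Its elements are ad-semisimple, since the semisimple and nilpotent Jordan parts of any $x\in\mathfrak c$ still centralize $\fh(1)$, and the nilpotent parts span an ideal forced to be $0$ by reductivity and nondegeneracy of $\kapg$ on $\mathfrak c$.
\item Thus $\mathfrak c$ is a maximal toral subalgebra, i.e.\ a Cartan subalgebra $\tilde{\fh}$ of $\fg$.
\end{itemize}
This is the step I expect to be the main obstacle. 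The rest only uses that $\tilde{\fh}$ is a $\si$-stable Cartan subalgebra containing $\fh(1)$.

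For (2), every root of $(\fg,\tilde{\fh})$ restricts to a nonzero functional on $\fh(1)$. Otherwise its root space would centralize $\fh(1)$, contradicting $z(\fh(1))=\tilde{\fh}$. So I pick $H\in\fh(1)$ avoiding the finitely many hyperplanes $\ker(\al|_{\fh(1)})$, with $\mathrm{Re}\,\al(H)\neq0$ for all roots $\al$. Then $H$ is regular in $\fg$, and $\Delta^+=\{\al:\mathrm{Re}\,\al(H)>0\}$ is a positive system. Since $\si(H)=H$ and $\si$ permutes the roots, with $(\si\al)(H)=\al(\si^{-1}H)=\al(H)$, it preserves $\Delta^+$ and hence the simple system. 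This yields a diagram automorphism $\si_0$ of $\dy$.

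For (3), I choose Chevalley generators $\{e_\al,f_\al,h_\al\}$ and define $\si_0$ by permuting them according to the diagram symmetry. The Serre relations are preserved, so $\si_0$ extends to an automorphism by Serre's theorem. Rescaling the generators gives other lifts, which is why $\si_0$ is not unique.

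For (4), I choose the lift so that $\si_0|_{\tilde{\fh}}=\si|_{\tilde{\fh}}$; both act on $\tilde{\fh}$ by the same permutation of simple coroots. Then $\si_0^{-1}\si$ fixes $\tilde{\fh}$ pointwise, so it preserves each root space and multiplies $e_{\al_j}$ by a scalar $c_j\neq0$. Choosing $H\in\tilde{\fh}$ with $\al_j(H)=\log c_j$ gives $\si_0^{-1}\si=e^{\ad H}$, because both sides agree on generators.

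For (5), I decompose $H=\sum H_i$ into $\si_0$-eigencomponents; this is possible since $\si_0|_{\tilde{\fh}}$ has finite order. The identity $\si_0e^{\ad H_i}\si_0^{-1}=e^{\ad \si_0(H_i)}=e^{\omega^i\ad H_i}$ is then immediate. For $i\neq0$, I compute
\[
e^{-a\,\ad H_i}\si_0e^{a\,\ad H_i}=\si_0e^{-a\omega^i\ad H_i}e^{a\,\ad H_i}=\si_0e^{a(1-\omega^i)\ad H_i},
\]
and take $a=\frac{1}{1-\omega^i}$. Because $\tilde{\fh}$ is abelian, all the exponentials commute, so the components $H_i$ with $i\neq0$ can be absorbed one by one into a new regular automorphism. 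This leaves $\si=\si_0e^{\ad H_0}$.

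In the inner case, $\si_0$ can be taken to be $\id$, since the diagram symmetry must be trivial. Then $H=H_0$ is $\si$-fixed, so $H_0\in\fgaa$, and $\tilde{\fh}\subset\fgaa$.
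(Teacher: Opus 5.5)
Your route is the paper's: the paper proves (1) only by citing Helgason, Ch.~X, Lemma~5.3, and your (2)--(5) expand its inline sketches. However, the key step of (1) rests on a false statement. A semisimple automorphism whose fixed-point algebra is nilpotent does \emph{not} force solvability. For example, on $\mathfrak{sl}_2(\bc)$ the automorphism $\mathrm{Ad}\,\mathrm{diag}(\zeta,\zeta^{-1})$ with $\zeta^2=e^{2\pi i/3}$ has order $3$ and fixed-point algebra $\bc\,\mathrm{diag}(1,-1)$, which is abelian, yet $\mathfrak{sl}_2(\bc)$ is simple.

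What actually makes your argument work is the centrality of $\fh(1)$ in $\mathfrak c$, which you did establish. The semisimple ideal $[\mathfrak c,\mathfrak c]$ is $\si$-stable, and its $\si$-fixed vectors lie in $\fh(1)\cap[\mathfrak c,\mathfrak c]\subset Z(\mathfrak c)\cap[\mathfrak c,\mathfrak c]=0$. The Borel--Mostow theorem says a semisimple automorphism with no nonzero fixed vector forces solvability, so $[\mathfrak c,\mathfrak c]=0$. Your last bullet can then be shortened. Since $\fh(1)$ is toral, it lies in some Cartan subalgebra $\tilde\fh$ of $\fg$. Then $\tilde\fh\subset\mathfrak c$, and $\mathfrak c$ abelian gives $\mathfrak c\subset z(\tilde\fh)=\tilde\fh$.

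In (5) the conjugation formula is off. Since $e^{\ad X}\si_0=\si_0e^{\ad \si_0^{-1}X}$ and $\si_0^{-1}H_i=\omega^{-i}H_i$, one has $e^{-a\,\ad H_i}\si_0e^{a\,\ad H_i}=\si_0e^{a(1-\omega^{-i})\ad H_i}$. So the correct choice is $a=\frac{1}{1-\omega^{-i}}$, which is exactly the paper's $1-k$. Your $a=\frac{1}{1-\omega^{i}}$ is the paper's $k$, and it produces $\si_0e^{-\omega^{-i}\ad H_i}$ rather than $\si_0e^{\ad H_i}$. This is harmless when $\omega^i=-1$, i.e.\ for diagram automorphisms of order $2$, the only ones the paper uses. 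It is wrong, though, for the triality of $D_4$, which the proposition as stated does not exclude. With these two corrections your proof goes through.
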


Since $\fg$ is a complex simple Lie algebra with Killing form $\kapg$, which is a non-degenerate invariant symmetric bilinear form on $\fg$.
Let $\si\in \aut(\fg)$ have order 4, and we assume that $B$ is $\si$-invariant, that is, $\kapg(\si X, \si Y)=\kapg(X,Y)$ for all $X, Y\in \fg$.
With respect to $\si$, $\fg$ has the decomposition
\[
\fg=\fgaa+\fgbb+\fgcc+\fgdd.
\]

\begin{lemma}
 If  $\fgbb$ has a $\fgaa$-invariant symplectic form, then $\fgbb+\fgdd$ has a $\fgaa+\fgcc$-invariant symplectic form.
 \label{lem1}
\end{lemma}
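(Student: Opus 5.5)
Given a non-degenerate $\fgaa$-invariant symplectic form $\omega$ on $\fgbb$, I will build the form on $\fgbb+\fgdd$ in two steps. First I transport $\omega$ to $\fgdd$ using the Killing form. Then I combine the two pieces with a sign, so that invariance under $\fgcc$ comes out correctly.

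Since $\kapg$ is $\si$-invariant, $\kapg(\fg(a),\fg(b))=0$ unless $ab=1$. Hence $\kapg$ pairs $\fgbb$ and $\fgdd$ non-degenerately and $\fgaa$-equivariantly, and so $\fgdd\cong\fgbb^*$ as $\fgaa$-modules. The form $\omega$ gives an $\fgaa$-isomorphism $\phi:\fgbb\to\fgdd$, defined by $\kapg(\phi(x),y)=\omega(x,y)$ for all $y\in\fgbb$. Transporting $\omega$ gives an invariant symplectic form $\omega'$ on $\fgdd$, with $\omega'(\phi x,\phi y)=\omega(x,y)$. The natural candidate is then
\[
\Omega(x+u,y+v)=\omega(x,y)+\lambda\,\omega'(u,v),\qquad x,y\in\fgbb,\ u,v\in\fgdd,
\]
with a scalar $\lambda$ still to be chosen. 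By construction $\Omega$ is non-degenerate and skew, and it is $\fgaa$-invariant because each summand is.

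The main obstacle is invariance under $\fgcc$. An element $c\in\fgcc$ sends $\fgbb$ to $\fgdd$ and $\fgdd$ to $\fgbb$. The condition $\Omega([c,x],u)+\Omega(x,[c,u])=0$ for $x\in\fgbb$, $u\in\fgdd$ therefore reads
\[
\lambda\,\omega'([c,x],u)+\omega(x,[c,u])=0,
\]
while the conditions on pairs inside $\fgbb$ or inside $\fgdd$ are vacuous. I doubt this holds for an arbitrary $\omega$. I would first try to show that every $\fgaa$-invariant symplectic form on $\fgbb$ has the shape $\omega(x,y)=\kapg([z,x],y)$ for some $z$ in the center of $\fgaa+\fgcc$ lying in $\fgcc$. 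That shape is exactly the form the paper uses later, and it is the content of Theorem \ref{main1}. For such an $\omega$ we have $\phi(x)=-[z,x]$, up to sign. Then the natural global candidate is the form
\[
\Omega(X,Y)=\kapg([z,X],Y)\quad\text{on }\fg^{-\si^2}=\fgbb+\fgdd.
\]
This form is $\fg^{\si^2}$-invariant by ad-invariance of $\kapg$, since $z$ commutes with $\fg^{\si^2}=\fgaa+\fgcc$. It is skew because $\kapg([z,X],Y)=-\kapg(X,[z,Y])$. Its restriction to $\fgbb$ is $\omega$, and $\ad z$ is invertible on $\fg^{-\si^2}$ because $\Omega|_{\fgbb}$ is non-degenerate; together these give non-degeneracy.

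To avoid circularity with Theorem \ref{main1}, I would instead derive the needed $z$ directly. Consider the $\fgaa$-equivariant map $\fgcc\to\mathrm{Hom}(\fgbb,\fgdd)$, $c\mapsto\ad c$, and compare $\ad c$ with $\phi$. Using the bracket $[\fgbb,\fgbb]\subset\fgcc$ and the Jacobi identity, I would show that the operator $\phi^{-1}\circ\ad c$ on $\fgbb$ is $\omega$-compatible. This is the same as verifying the displayed identity for a suitable $\lambda$, which I expect to be $\lambda=-1$ with the sign conventions above. If this direct verification fails, I would fall back on the $z$-construction: realize $\phi$ as $\ad z|_{\fgbb}$ by showing that $\phi$ extends to a derivation of $\fg$ commuting with $\si^2$, and then use that every derivation of a simple $\fg$ is inner.
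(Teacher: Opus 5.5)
Your setup matches the paper's: your $\phi$ is its $f$, and the paper takes $\lambda=1$. Nondegeneracy, skewness and $\fgaa$-invariance are fine. But $\fgcc$-invariance is the entire content of the lemma, and the proposal does not prove it.

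For $c\in\fgcc$ put $T_c=\phi^{-1}\circ\ad c|_{\fgbb}$. Since $\omega(T_cx,y)=\kapg([c,x],y)$ is skew in $x,y$, the operator $T_c$ is automatically $\omega$-self-adjoint. With $u=\phi v$, your displayed condition therefore becomes
\[
\phi\circ\ad c\circ\phi=-\lambda\,\ad c \quad\text{on } \fgbb,\ \text{for all } c\in\fgcc .
\]
In other words, $\phi\oplus(-\lambda\phi^{-1})$ on $\fgbb\oplus\fgdd$ must commute with $\ad\fgcc$. Nothing in your sketch yields this.

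The $\fgaa$-equivariance of $\phi$ and the skewness of $\omega$ give only two facts:
\begin{itemize}
\item the self-adjointness of $T_c$ just noted;
\item the relation $[\phi x,y]+[x,\phi y]=0$ for $x,y\in\fgbb$, since this element lies in $\fgaa$ and is $\kapg$-orthogonal to $\fgaa$.
\end{itemize}
Neither fact sandwiches $\ad c$ between two copies of $\phi$. So ``Jacobi plus $[\fgbb,\fgbb]\subset\fgcc$'' does not produce the identity without some new input.

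The derivation fallback only restates the problem. Extend $\phi$ by $\mu\phi^{-1}$ on $\fgdd$ and by $0$ on $\fg^{\si^2}$, which is what $\ad z$ would look like. Then the derivation rule on $[\fgcc,\fgbb]$ is literally the displayed identity with $\mu=-\lambda$. So the step meant to give you $z$ independently of Theorem \ref{main1} is exactly the missing step.

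Your expected value $\lambda=-1$ cannot hold in general. Replacing $\omega$ by $t\omega$ replaces $\phi$ by $t\phi$ and $\omega'$ by $t^{-1}\omega'$. Hence an admissible $\lambda$ becomes $t^2\lambda$; the admissible value is unique once $\fgcc\neq 0$, by simplicity of $\fg$. For $\omega=\kapg([z,\cdot],\cdot)$ with $\ad z=\pm\gamma$ on $\fg^{-\si^2}$, it equals $-\gamma^2$.

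The same remark shows that the paper's choice $\lambda=1$ works only for special normalizations of $\omega$. Indeed, the paper derives $f([y,w])=-[y,f^{-1}(w)]$ using, in the last equality of (\ref{inv2}), the very $\fgcc$-invariance it is meant to prove. So your suspicion is justified, but neither argument closes the gap.

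To close it you need an independent source of the central element. One way is a finite check:
\begin{itemize}
\item For inner $\si$ the hypothesis never holds: $\fgaa$ contains a Cartan subalgebra, and no weight of $\fgbb$ has its negative in $\fgbb$.
\item For the outer $\si$ in Table \ref{table2}, verify that $\fgbb$ carries a $\fgaa$-invariant symplectic form only in the rows with $Z(\fg^{\si^2})=\bc z\subset\fgcc$.
\end{itemize}
Your form $\kapg([z,\cdot],\cdot)$ then finishes the proof.
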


\begin{proof}
Let $\langle\cdot, \cdot \rangle: \fgbb\times \fgbb\rightarrow \bc$ be the $\fgaa$-invariant symplectic form.
Define the map $f: \fgbb\rightarrow \fgdd, ~u \longmapsto f(u)$ by
\[
\langle u, v \rangle =\kapg(f(u), v), ~~ \forall v\in \fgbb.
\]

We first prove that $f$ is well-defined.
If there exist $w_1, w_2\in \fgdd$ such that $\langle u, v\rangle =\kapg(w_1, v)=\kapg(w_2, v), \forall v\in \fgbb$,
then $\kapg(w_1-w_2, v=0), \forall v\in \fgbb$.
Also we know that $\kapg(\fgbb, \fgaa)=\kapg(\fgbb,\fgbb)=\kapg(\fgbb, \fgcc)=0$, then $w_1-w_2=0$ because $\kapg$ is non-degenerate.
This implies that $f$ is well-defined.

Next we prove that $f$ is injective.
If $u_1, u_2\in \fgbb$ and $f(u_1)=f(u_2)$, then 
\[\langle u_1, v\rangle=\kapg(f(u_1), v)=\kapg(f(u_2), v)=\langle u_2, v\rangle, \forall v\in \fgbb.
\]
Since $\langle \cdot, \cdot \rangle $ is non-degenerate on $\fgbb$, then $u_1=u_2$.

Furthermore, since $\kapg: \fgbb\times \fgdd\rightarrow \bc$ is non-degenerate, then $\dim \fgbb=\dim\fgdd$.  This implies that $f$ is a bijection.

Define $\langle \cdot, \cdot \rangle$ on $\fgbb+\fgdd$ as follows:
\begin{itemize}
\item[(1)] $\langle f(u), f(v)\rangle=\langle u, v\rangle$,  $ \forall f(u), f(v)\in \fgdd$.
\item[(2)] $\langle u, f(v)\rangle =0$, $\forall u\in \fgbb, f(v)\in \fgdd$.
\end{itemize}

We first prove that this symplectic form on $\fgbb+\fgdd$ is non-degenerate.
If for any $f(v)\in \fgdd$, we have$\langle f(u),f(v)\rangle=0$. Since $f$ is a bijection, then $\langle u, v\rangle=0, \forall v\in \fgbb$.
This implies $u=0$ and hence $f(u)=0$.
So this symplectic form on $\fgbb+\fgdd$ is non-degenerate.

Next we prove that it is $\fgaa$-invariant.
Since $[\fgaa, \fgbb]\subset\fgbb$, $\langle \fgbb, \fgdd\rangle=0$ and $\langle \cdot, \cdot \rangle $ is $\fgaa$-invariant on $\fgbb$, we just need to prove that $\langle [f(u), x], f(v)\rangle =\langle f(u), [x, f(v)]\rangle$
holds for any $f(u), f(v)\in \fgdd$ and $x\in \fgaa$.

We claim that $f([u, x])=[f(u), x], \forall x\in \fgaa, u\in \fgdd$.
For any $v\in \fgbb$, we have \[
\kapg([f(u), x], v)=\kapg(f(u), [x, v])=\langle u, [x, v]\rangle=\langle [u, x], v\rangle=\kapg(f([u,x]), v).
\]
This implies that  $f([u, x])=[f(u), x]$ as claimed. Similarly, we have $f([x, u])=f([u,-x])=[f(u), -x]=[x, f(u)]$.

Therefore, for any $f(u), f(v)\in \fgdd$ and $x\in \fgaa$, we have \begin{eqnarray*}
\langle [f(u), x], f(v)\rangle&=&\langle f[u, x], f(v)\rangle =\langle [u, x], v\rangle=\langle u, [x,v]\rangle\\
&=&\langle f(u), f[x, v]\rangle=\langle f(u), [x, f(v)]\rangle.
\end{eqnarray*}

Finally, we need to prove that $\langle \cdot, \cdot \rangle$ is $\fgcc$-invariant.
Since $\langle \fgbb, \fgdd \rangle=0$, then we just need to prove that
$\langle[u, y], f(v)\rangle=\langle u, [y, f(v)]\rangle, ~\forall y\in \fgcc, u\in \fgbb, f(v)\in \fgdd$.

We claim that $f([y, w])=-[y, f^{-1}(w)], \forall y\in \fgcc, w\in \fgdd$.
By the definition of $f$, it is easy to get that $\kapg(f(u), v)+\kapg(u, f(v))=\langle u, v\rangle+\langle v, u\rangle =0, \forall u, v\in \fgbb$.
Then \begin{equation}\label{inv1}
\kapg(f([y, w]), u)=-\kapg([y,w], f(u))=-\kapg(f(u), [y, w])=-\langle u, [y,w]\rangle,
     \end{equation}
and
\begin{eqnarray}
\kapg([y, f^{-1}(w)], u) &=&-\kapg(f^{-1}(w), [y, u])=-\kapg([y,u], f^{-1}(w)) \nonumber \\
 &=&-\langle f^{-1}([y, u]), f^{-1}(w)\rangle= -\langle[y,u], w\rangle=\langle u, [y, w]\rangle.
\label{inv2}
\end{eqnarray}
By (\ref{inv1}) and (\ref{inv2}), we have $f([y, w])=-[y, f^{-1}(w)], \forall y\in \fgcc, w\in \fgdd$ as claimed.

For any $y\in \fgcc, u\in \fgbb, f(v)\in \fgdd$, we have
\begin{eqnarray*}
\langle[u, y], f(v)\rangle&=&\langle f^{-1}([u, y]), v\rangle=\kapg([u, y], v)=\kapg(u, [y,v])=-\kapg(u, f[y, f(v)])\\
&=&-\kapg( f([y, f(v)]), u)=-\langle [y, f(v)], u\rangle=\langle u, [y, f(v)]\rangle.
\end{eqnarray*}
Then $\langle \cdot, \cdot \rangle$ is $\fgcc$-invariant.
Hence we get that $\fgbb+\fgdd$ has a $\fgaa+\fgcc$-invariant symplectic form.
\end{proof}

\begin{lemma}
If  $\fgbb$ has a $\fgaa$-invariant symplectic form, then $\fgaa+\fgcc$ has a one-dimensional center.
\label{lem2}
\end{lemma}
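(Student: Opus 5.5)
We plan to turn the symplectic form into an element of $\fg$ that commutes with $\fgaa+\fgcc$, and then use Schur-type arguments to show this centre is exactly one-dimensional.

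\emph{Construction of a central element.} By Lemma \ref{lem1}, $\fgbb+\fgdd=\fg^{-\si^2}$ carries a non-degenerate $\fg^{\si^2}$-invariant symplectic form $\langle\cdot,\cdot\rangle$, where $\fg^{\si^2}=\fgaa+\fgcc$. The Killing form $\kapg$ restricts to a non-degenerate symmetric form on $\fg^{-\si^2}$. Define $T\in \mathrm{End}(\fg^{-\si^2})$ by
\[
\langle u,v\rangle=\kapg(Tu,v).
\]
Invariance of both forms shows that $T$ commutes with $\ad(\fg^{\si^2})$. Skew-symmetry of $\langle\cdot,\cdot\rangle$ shows that $T$ is $\kapg$-skew, so $T\in\fso(\fg^{-\si^2},\kapg)$.

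\emph{Realizing $T$ inside $\fg$.} For a symmetric pair $(\fg,\si^2)$ with $\fg$ simple, $\fg^{-\si^2}$ is either irreducible under $\fg^{\si^2}$ or a sum $V\oplus V^*$ of two inequivalent irreducibles. This is the Hermitian case, and in it the centralizer of $\ad\fg^{\si^2}$ in $\mathrm{End}(\fg^{-\si^2})$ is $\bc\,\id_V\oplus\bc\,\id_{V^*}$.

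In the irreducible case, Schur's lemma gives $T=c\cdot\id$. This is impossible, because a nonzero scalar is not $\kapg$-skew and $T$ is nonzero since $\langle\cdot,\cdot\rangle$ is non-degenerate. Hence we are in the Hermitian case, and skewness forces $T=c(\id_V-\id_{V^*})$ with $c\neq 0$. Here one uses that $V$ and $V^*$ are $\kapg$-isotropic and paired with each other.

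In the Hermitian case it is classical that the center of $\fg^{\si^2}$ is one-dimensional, spanned by $z_0$, and that $\ad z_0$ acts on $V$ and $V^*$ by opposite scalars. Consequently $T$ is a multiple of $\ad z_0|_{\fg^{-\si^2}}$. This establishes that $\fgaa+\fgcc$ has a one-dimensional center. As a bonus, $\langle u,v\rangle$ is a nonzero multiple of $\kapg([z_0,u],v)$.

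\emph{Avoiding the classical input.} To avoid quoting the Hermitian classification, one can argue directly. The reductive algebra $\fg^{\si^2}$ has center $\fs$, and $\fs$ acts faithfully and semisimply on $\fg^{-\si^2}$, since $\fg$ is simple and $\fg^{-\si^2}$ generates $\fg$. With a two-summand decomposition, $\dim\fs\le 2$. Moreover, $\dim\fs=2$ would force $\id_V+\id_{V^*}$ to be $\kapg$-skew, which it is not. So $\dim\fs\le1$.

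\emph{Main obstacle.} The key step is showing $\dim\fs\ge1$, i.e.\ that the operator $T$ actually comes from $\fg$. We would handle this with the derivation trick. Extend $T$ by $0$ on $\fg^{\si^2}$ to obtain $\tilde T$. Checking that $\tilde T$ is a derivation of $\fg$ reduces to verifying
\[
[Tu,v]+[u,Tv]=0 \quad\text{for } u,v\in\fg^{-\si^2},
\]
together with $\tilde T[u,v]=0$. The first identity follows from $V$ and $V^*$ being abelian. Indeed, $[V,V]\subset\fg^{\si^2}$ must vanish: the $\si^2$-grading together with the grading by eigenvalues of $T$ forces this component to be an eigenvector of eigenvalue $2c$, which cannot occur in $\fg^{\si^2}$. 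This abelianness is the delicate point. Once $\tilde T$ is known to be a derivation, it is inner, $\tilde T=\ad z$ with $z$ central in $\fg^{\si^2}$ and $z\neq0$, which completes the proof.
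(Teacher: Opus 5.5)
Your strategy differs from the paper's. The paper proves this lemma in one line: after Lemma \ref{lem1}, $(\fg,\si^2)$ is a symplectic symmetric pair, and the result of \cite{bi} gives $\dim Z(\fgaa+\fgcc)=1$. Your first route is a legitimate replacement for that citation: Schur's lemma rules out irreducible $\fg^{-\si^2}$, so the pair is Hermitian, and its center is classically one-dimensional. It also gives a bonus: every invariant symplectic form is a multiple of $\kapg([z_0,u],v)$. The paper uses this uniqueness without comment in the proof of Theorem \ref{main1}.

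The self-contained route has a gap exactly where you locate the difficulty: your proof that $[V,V]=0$ is circular. The grading by eigenvalues of $T$ is defined only on $\fg^{-\si^2}$. Saying that $[V,V]$ must be a $2c$-eigenvector assumes the bracket respects this grading, i.e.\ that $\tilde T$ is already a derivation, which is what you are trying to prove.

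The claim is true and follows from a fact you already used. $V$ is $\fg^{\si^2}$-stable and $\kapg$-isotropic. So for $v_1,v_2\in V$ and $w\in\fg^{\si^2}$ we get $\kapg([v_1,v_2],w)=\kapg(v_1,[v_2,w])=0$. Since $\kapg$ is non-degenerate on $\fg^{\si^2}$, this gives $[v_1,v_2]=0$, and the same argument works for $V^*$.

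In fact no decomposition is needed at all. For any $\kapg$-skew $T$ commuting with $\ad\fg^{\si^2}$ and any $w\in\fg^{\si^2}$,
\[
\kapg([Tu,v],w)=\kapg(Tu,[v,w])=-\kapg(u,[Tv,w])=-\kapg([u,Tv],w),
\]
so $[Tu,v]+[u,Tv]=0$ directly.

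With this repair, $\tilde T=\ad z$ with $z\neq 0$. Moreover $z\in\fg^{\si^2}$, because $\tilde T$ commutes with $\si^2$ and $\fg$ has trivial center. This gives $\dim\fs\ge 1$ without any structure theory. Only your upper bound $\dim\fs\le 1$ still relies on the two-summand decomposition of $\fg^{-\si^2}$.
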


\begin{proof}
By Lemma \ref{lem1}, we have that $\fgbb+\fgdd$ has a $\fgaa+\fgcc$-invariant symplectic form.
 Then $(\fg, \si^2)$ is a symplectic symmetric pair, which implies $\dim Z(\fgaa+\fgcc)=1$ by \cite{bi}.
\end{proof}

\bigskip 

\noindent{Proof of Theorem \ref{main1}}:
When $\fgbb$ has a $\fgaa$-invariant symplectic form, then $\fgaa+\fgcc$ has a one-dimensional center by Lemma \ref{lem2}.
Let $z\in Z(\fgaa+\fgcc)$ be nonzero.
We claim that $z\in \fgaa$ or $\fgcc$.
Write $z=z'+z''$ with $z'\in \fgaa$ and $z''\in \fgcc$. Then $\forall x\in \fgaa $, we have
\[
0=[z, x]=[z'+z'', x]=[z', x]+[z'', x],
\]
which implies that $[z', x]=[z'', x]=0$ because $[z', x]\in \fgaa$ and $[z'', x]\in \fgcc$.
Therefore $z\in Z(\fgaa)$.
Similarly, we have $[z', y]=[z'', y]=0 $ for any $y\in \fgcc$.
Then $z', z''\in Z(\fgaa+\fgcc)$.
Since $\dim Z(\fgaa+\fgcc)=1$, then $z\in \fgaa$ or $z\in\fgcc$.

Assume that $\sigma z=\ep z$ with $\ep=\pm 1$ according to $z\in \fgaa$ or $\fgcc$.
Up to scalar, the symplectic structure on $\fgbb+\fgdd$ is defined by $\langle u, v \rangle=\kapg(z, [u, v])$ for $u, v\in \fgbb+\fgdd$.
Given $u\in \fgbb, v\in \fgdd$, we have
\[
\langle u, v\rangle =\kapg(z, [u,v])= \kapg(\sigma z, [\sigma u, \sigma v])=\ep \kapg(z, [u, v])=\ep \langle u, v\rangle.
\]
If $\ep=-1$, we have $\langle \fgbb, \fgdd\rangle =0$.
But $\langle \cdot, \cdot \rangle$ is non-degenerate on $\fgbb+\fgdd$, then $\fgbb$ has a symplectic form.

If $\ep=1$, we can prove that $\langle \fgbb, \fgbb\rangle =\langle \fgdd, \fgdd\rangle =0$ similarly.
Then this prove Theorem \ref{main1}.
 $\hfill$$\Box$

\bigskip

\noindent{Proof of Theorem \ref{main2}}:
If $\sigma$ is inner, then by Proposition \ref{prop31}, the center of $\fgaa+\fgcc$ should be in the Cartan subalgebra $\fh$, which is in $\fgaa$.
Thus $\fgaa+\fgcc$ has no center  in $\fgcc$.
Therefore we just need to consider the cases when $\sigma$ is outer.
For these cases, we compute $\fg^\si=\fgaa$ and $\fg^{\si^2}=\fgaa+\fgcc$.
Also note that $(\fgaa+\fgcc, \fgaa)$ is a symmetric pair.
We list the results in the following Table \ref{table2}.

\begin{table}[ht!] \renewcommand{\arraystretch}{2.5}\small
\begin{center}
\begin{tabular}{|c|c|c|c|c| }
  \hline
  $\fg$ & $\si$ & $\fgaa$ & $\fgaa+\fgcc$ & center   \\
  \hline
   &\includegraphics[width=3.5cm]{A4.png}  &  {$ C_{n-1}\oplus \bc
  $} & {$A_{2n-3}\oplus A_1\oplus \bc$ } &  { $z\in \fgcc$} \\
    \cline{2-5}
{ $A_{2n-1}
$}  &\includegraphics[width=3.5cm]{A5.png}  &  { $ D_p\oplus C_{n-p} $} & {$A_{2p-1}\oplus A_{2n-2p-1}\oplus \bc$ } &   { $z\in \fgcc$ }\\
 \cline{2-5}
  &\includegraphics[width=3.5cm]{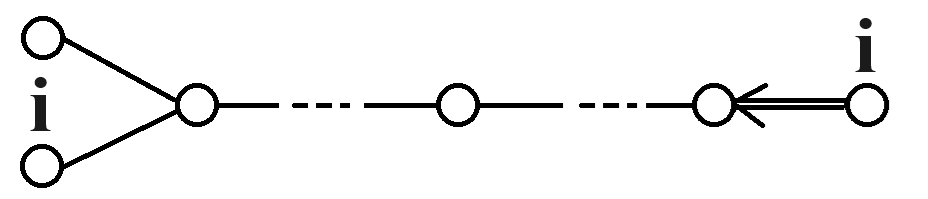}  &  { $A_{n-1}\oplus \bc$} & {$A_{n-1}\oplus A_{n-1}\oplus \bc$ } &  $z\in \fgaa$ \\
 \hline
{ $ A_{2n} $}  &\includegraphics[width=3.5cm]{A7.png}  &   { $ C_p\oplus B_{n-p}  $} & {$A_{2p-1}\oplus A_{2n-2p}\oplus \bc$ } &   {$z\in \fgcc$ }\\
    \cline{2-5}
 &\includegraphics[width=3.5cm]{A8.png}  &  { $ C_n
 $} & {$A_{2n-1} \oplus \bc$ } & { $z\in \fgcc$} \\
  \hline
 &\multirow{2}{*}{\includegraphics[width=3.5cm]{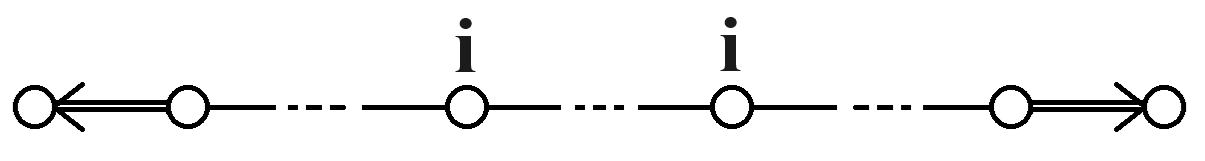}}  &     { $  B_{p}\oplus  A_q\oplus B_{n-p-q-2}\oplus  \bc$} &   $ D_{q+1}\oplus  D_{n-q-1 } $  &   $\dim Z=0 $ \\
\cline{3-5}
 & &{ $B_{p}+B_{n-p-2}+ \bc$} &  { $ D_{n-1}\oplus \bc $} &   { $ z\in \fgaa$} \\
   \cline{2-5}
{ $ D_{n} $}     &\includegraphics[width=3.5cm]{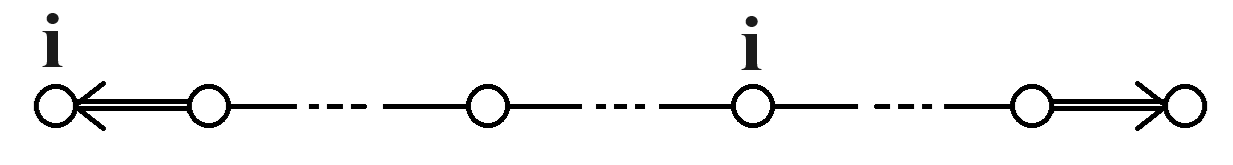}  &  { $  A_{p}\oplus B_{n-p-2}\oplus  \bc  $} &   $ D_{p+1}\oplus D_{n-p-1}$  &   $\dim Z=0 $ \\
      \cline{2-5}
 &\includegraphics[width=3.5cm]{D2-3.png}  &  { $  A_{n-2}\oplus  \bc  $} &   $  D_{n-1}\oplus \bc$  &     {$z\in \fgcc $} \\
  \hline
&\includegraphics[width=2.6cm]{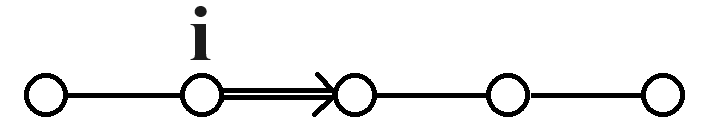}  &  { $  A_1\oplus A_3  $} &   $ A_1\oplus A_5 $  &    $\dim Z=0 $\\
   \cline{2-5}
{ $ E_6 $}     &\includegraphics[width=2.6cm]{E6-22.png}  &  { $B_{3}\oplus A_{1} $} &   $ D_{5}\oplus \bc$  &    {$z\in \fgcc $} \\
      \cline{2-5}
 &\includegraphics[width=2.6cm]{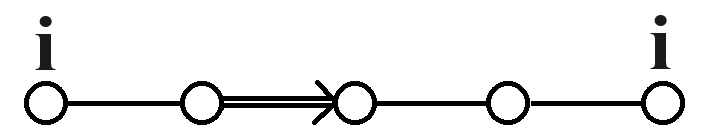}  &  { $ C_3\oplus  \bc  $} &   $  A_5\oplus A_1 $  &   $\dim Z=0 $  \\
  \hline
      \end{tabular}
\caption{  $\fgaa$ and $\fgaa+\fgcc$  }
\label{table2}
\end{center}
 \end{table}

 \setcounter{equation}{0}

Now we explain Table \ref{table2}.
In the second column of Table \ref{table2}, the diagrams satisfy Theorem \ref{Kac} and $\fgaa$ in the 3rd column is got by Theorem \ref{Kac}(b).
Next we illustrate the case of $D_n$ as an example to explain how to get $\fgaa+\fgcc$.
The example is showed in Table \ref{table3}.

\begin{center}
\begin{table}[ht]   \renewcommand{\arraystretch}{2.5}
\begin{tabular}{ |c|c|c|c| }
  \hline
  $\si$ & Vogan diagram of $\si^2$ & equivalent diagram & $\fgaa+\fgcc$  \\
  \hline
 \multirow{2}{*}{\includegraphics[width=3.5cm]{D2-1.png}}  &     \multirow{2}{*}{\includegraphics[width=3.5cm]{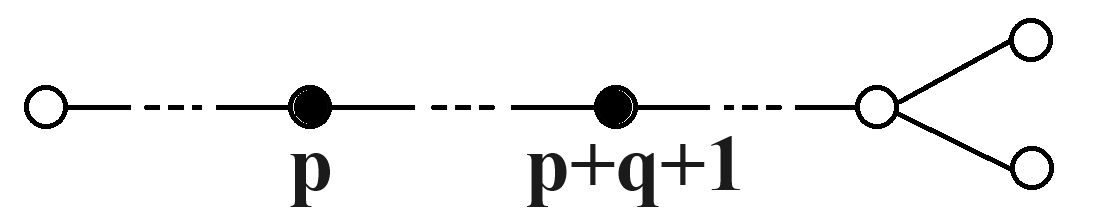}} & \includegraphics[width=3.5cm]{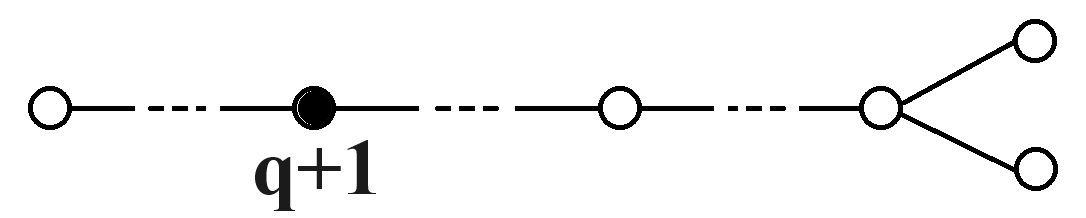} & $ D_{q+1}+ D_{n-q-1} $ \\
\cline{3-4}
 &&  \includegraphics[width=3.5cm]{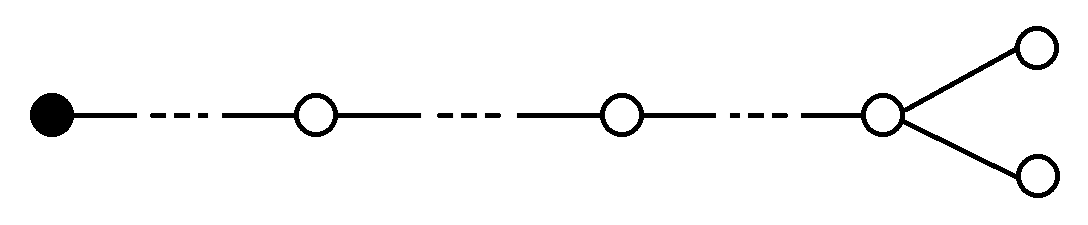}& { $ D_{n-1}+\bc $}\\
   \hline
 \includegraphics[width=3.5cm]{D2-2.png}  & \includegraphics[width=3.5cm]{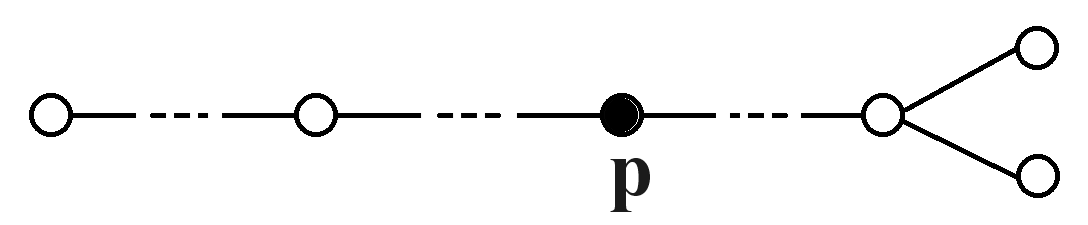} &    \includegraphics[width=3.5cm]{D104.png}& { $ D_{p+1}+D_{n-p-1}$ }\\
  \hline
  \includegraphics[width=3.5cm]{D2-3.png}  & \includegraphics[width=3.5cm]{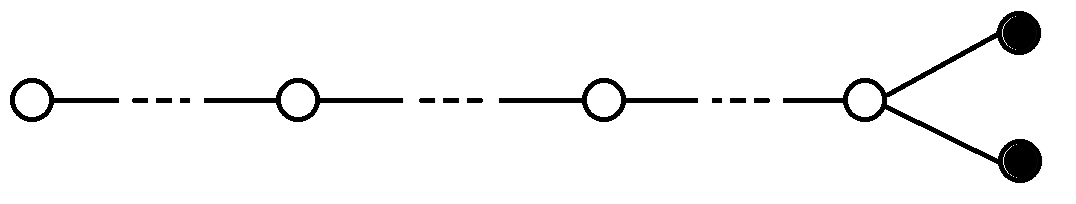} &    \includegraphics[width=3.5cm]{D103.png}& { $ D_{n-1}+\bc $}\\
  \hline
 \end{tabular}
\caption{  $\fgaa+\fgcc$ for $D_n$ }
\label{table3}
 \end{table}
\end{center}

By Theorem \ref{Kac}, it is easy to get the Vogan diagram of $\si^2$ as listed in the 2nd column of Table \ref{table3}.  In 3rd column of Table \ref{table3}, the equivalent diagram of $\si$ is got by \cite[Table 1]{hcc}.
Then we get $\fgaa+\fgcc$ in the 4th column of Table \ref{table3}.

Table \ref{table1} lists exactly the pairs from Table \ref{table2} that satisfy the condition $z \in \fgcc$ as indicated in the last column of Table \ref{table2}.
Hence we prove Theorem \ref{main2}.
$\hfill$$\Box$

\section{Generalized symmetric pairs and Lie superalgebras}

In this section, we prove Theorem \ref{main}. The proof relies on a complicated case-by-case computation concerning root systems. We illustrate Table \ref{table1}(a) as an example to demonstrate the proof explicitly. We also briefly discuss the two exceptional cases in Table \ref{table1}(f) and Table \ref{table1}(b) with $\fg = A_5$.

First we use markings on the Dynkin diagram to give another description of order 4 outer automorphisms of $\fs\fl(n,\bc)$.
This will make
the computation easy in the following part of this section.
Then we prove Theorem \ref{main} for $\fg=\fs\fl(n, \bc)$.

\subsection{Finite order automorphisms on $\fs\fl(n,\bc)$}
\setcounter{equation}{0}

For complex simple Lie algebra $\fg$,  let $\dy$ be its Dynkin diagram.
Let $\cU$ be the multiplicative group of complex
numbers of finite order, namely
\[
\cU=\{z\in \bc; z^m=1 \mbox{ for some } m\in \bn\}.
\]

A \emph{marking} on a Dynkin diagram $\dy$ is a pair $(c, d)$,  where \[
c: \dy \longrightarrow \cU, \qquad d \in \aut(\dy).
\]
 It is said to \emph{represent} a finite order $\fg$-automorphism $\si$ if there exist vector space
automorphisms $\si_c$, $\si_d$ on $\sum_{\alpha\in\dy} \fg_\alpha$ and basis $\{X_\alpha\in \fg_\alpha\}_{\alpha\in\dy}$
 such that
 \begin{equation}
\left\{ \begin{array}{ll}
\mbox{(a)} & \mbox{ $\si_c$ acts as multiplication by $c_\al$ on $\fg_\al$};\\
\mbox{(b)} & \mbox{  $\si_d X_\al = X_{d\al}$ for all $\al\in  \dy$}; \\
\mbox{(c)} & \mbox{  $\si = \si_c \cdot \si_d$ on  $\sum_{\alpha\in\dy} \fg_\alpha$}.
\end{array}\right.
\end{equation}
The Vogan diagrams \cite{kn} are the markings on $\dy$ which satisfy $c_\al = \pm 1$, $\mathrm{ord}(d) =
1, 2$, and also $c_\al = 1$ whenever $d\al \neq\al$.

Let $\aut(\fg)$ denote automorphisms on $\fg$, and let $\inn(\fg)$ denote inner automorphisms on $\fg$.
It is well-known (i.e. \cite[Thm.7.8]{kn}) that
\[
\aut(\fg)/ \inn(\fg)\cong \aut(\dy).
\]
Recall that $\si$ is an outer automorphism of $\fg$ if $\si\notin\inn(\fg)$ and
naturally we have the following result.
\begin{lemma}
If $\si$ is an finite order automorphism of $\fg$, and $\si$ is represented by a marking $(c,d)$ on $\dy$, then
$\si$ is outer if and only if $d\neq 1$.
\label{lem:outer}
\end{lemma}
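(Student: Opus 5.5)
The plan is to use the isomorphism $\aut(\fg)/\inn(\fg)\cong\aut(\dy)$ quoted above from \cite[Thm.7.8]{kn}, together with the fact that the marking realizes $\si$ as a product $\si=\si_c\cdot\si_d$. I would first extend the vector space maps $\si_c$ and $\si_d$ on $\sum_{\alpha\in\dy}\fg_\alpha$ to Lie algebra automorphisms of $\fg$. This is possible because $\{X_\alpha\}_{\alpha\in\dy}$ together with the corresponding $\{X_{-\alpha}\}$ generate $\fg$, and Serre's relations are preserved by the rescaling $X_\alpha\mapsto c_\alpha X_\alpha$, $X_{-\alpha}\mapsto c_\alpha^{-1}X_{-\alpha}$, and by the permutation $X_{\pm\alpha}\mapsto X_{\pm d\alpha}$.

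Next I would check the two factors separately.
\begin{itemize}
\item The extended $\si_c$ acts trivially on a Cartan subalgebra $\tilde\fh$ and by the scalar $c_\alpha$ on each simple root space. Choosing $H\in\tilde\fh$ with $e^{\alpha(H)}=c_\alpha$ for every simple root $\alpha$, which is possible since the simple roots are linearly independent, gives $\si_c=e^{\ad H}\in\inn(\fg)$.
\item The extended $\si_d$ preserves $\tilde\fh$ and the chosen positive system, and it permutes the simple roots by $d$.
\end{itemize}
Since $\si$ and $\si_c\si_d$ agree on the generators $X_\alpha$ ($\alpha\in\dy$), I would then argue that $\si$ and $\si_c\si_d$ differ at most by an element acting trivially on those generators and on $\tilde\fh$. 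Such an element is again of the form $e^{\ad H'}$, hence inner. Consequently the image of $\si$ in $\aut(\fg)/\inn(\fg)\cong\aut(\dy)$ equals the image of $\si_d$.

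The remaining step is to identify the image of $\si_d$ in $\aut(\dy)$ with $d$ itself. The isomorphism of \cite[Thm.7.8]{kn} is constructed precisely by sending a diagram automorphism to the automorphism permuting the Chevalley generators, and an inner automorphism that preserves $\tilde\fh$ and the positive system acts on the simple roots as an element of the Weyl group fixing the fundamental chamber, hence trivially. Therefore $\si$ is inner if and only if $d=1$, which is the statement of the lemma.

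The main obstacle is the middle step, namely pinning down $\si$ on all of $\fg$ from its action on $\sum_{\alpha\in\dy}\fg_\alpha$. For $d\neq1$ the marking data alone do not determine $\si$ on the negative root vectors, so I would appeal to the generation statement in Theorem \ref{Kac}. Alternatively, I would note that any two automorphisms inducing the same permutation of the simple roots differ by an automorphism fixing the simple roots, which is inner by the standard argument above.
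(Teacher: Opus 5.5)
Your overall route is the paper's: the paper proves this lemma only by quoting $\aut(\fg)/\inn(\fg)\cong\aut(\dy)$. Your first and last steps are fine: extending $\si_c,\si_d$ through the Serre relations, writing $\si_c=e^{\ad H}$, and noting that an inner automorphism preserving $\tilde{\fh}$ and the positive system induces the trivial element of $W$, so that $\si_d$ is inner only if $d=1$.

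\textbf{The gap is the middle step.} A marking constrains $\si$ only on $\sum_{\alpha\in\dy}\fg_\alpha$ and says nothing about $\tilde{\fh}$. So you cannot conclude that $\tau:=(\si_c\si_d)^{-1}\si$ acts trivially on $\tilde{\fh}$, and in general it does not. Let $X_\theta$ be a highest root vector. Then $[X_\theta,X_\alpha]=0$ for all $\alpha\in\dy$, so $e^{\ad X_\theta}$ fixes every $X_\alpha$, while $e^{\ad X_\theta}(H')=H'-\theta(H')X_\theta$. Take $H\in\tilde{\fh}$ with $e^{\ad H}$ of finite order and $e^{\theta(H)}\neq 1$, and set $\si=e^{\ad X_\theta}e^{\ad H}e^{-\ad X_\theta}$.
\begin{itemize}
\item $\si$ has finite order and is represented by $(c,1)$ with $c_\alpha=e^{\alpha(H)}$.
\item Yet $\tau=e^{(e^{-\theta(H)}-1)\ad X_\theta}$ moves $\tilde{\fh}$, and $\si$ itself does not preserve $\tilde{\fh}$.
\end{itemize}
Note this example has $d=1$, so the indeterminacy is not special to $d\neq 1$. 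Separately, an automorphism trivial on $\tilde{\fh}$ and on every $X_\alpha$ is the identity, so the $e^{\ad H'}$ you describe would be trivial anyway.

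Your fallbacks do not close the gap. The generation statement in Theorem \ref{Kac} concerns the vertices of $\dy^r$, which include a lowest weight vector of $\fg^{\bar{1}}$; the set $\{X_\alpha\}_{\alpha\in\dy}$ generates only $\mathfrak{n}^+$. Comparing ``permutations of the simple roots'' presupposes that $\si$ preserves $\tilde{\fh}$, which is exactly what is not given.

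\textbf{The fix is to work with the Borel subalgebra instead of the Cartan subalgebra.}
\begin{itemize}
\item Since $\tau$ fixes every $X_\alpha$, $\alpha\in\dy$, it preserves $\mathfrak{n}^+$, hence its normalizer $\mathfrak{b}=\tilde{\fh}+\mathfrak{n}^+$.
\item $\tau(\tilde{\fh})$ is a Cartan subalgebra of the solvable algebra $\mathfrak{b}$. By conjugacy of Cartan subalgebras there is $u$ in the group generated by the $e^{\ad Y}$, $Y\in\mathfrak{n}^+$, with $u\tau(\tilde{\fh})=\tilde{\fh}$.
\item Then $u\tau$ maps $\tilde{\fh}$-root spaces to root spaces, and $u\tau(X_\alpha)\in X_\alpha+[\mathfrak{n}^+,\mathfrak{n}^+]$. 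This forces $u\tau(X_\alpha)=X_\alpha$.
\item Hence $u\tau$ fixes every simple root, so it is the identity on $\tilde{\fh}$. Since $[X_\alpha,X_{-\alpha}]$ is a nonzero element of $\tilde{\fh}$, this forces $u\tau(X_{-\alpha})=X_{-\alpha}$.
\item Thus $u\tau=\id$ and $\tau=u^{-1}\in\inn(\fg)$.
\end{itemize}
With this replacing your middle step, $\si\equiv\si_d$ modulo $\inn(\fg)$, and the rest of your argument goes through. The finite-order hypothesis is never needed.
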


Now consider the case $\fg=\fs\fl(n,\bc)$. We label the roots of the Dynkin diagram $\dy$ of $\fg=\fs\fl(n,\bc)$ by
\[
\ep_1-\ep_2, \ep_2-\ep_3, \ldots, \ep_{n-1}-\ep_{n}.
\]
Let $\displaystyle \lfloor \frac{n}{2} \rfloor$ stand for the maximal integer which is not larger than $\displaystyle \frac{n}{2}$.
Now for $\displaystyle  1\leq k \leq \lfloor \frac{n}{2} \rfloor$, we define the marking $(c_k, d)$ on  $\dy$
by:
\begin{equation}
\left\{ \begin{array}{l}
c_{\ep_{k}-\ep_{k+1}}=i; \quad c_{\ep_{n-k}-\ep_{n+1-k}}=i; \\
c_{\ep_s-\ep_{s+1}}=1, \mbox{ for $s\neq k, n -k$}; \\
d: \ep_t-\ep_{t+1} \longmapsto \ep_{n -t}-\ep_{n+1-t}, \mbox{ for $1\leq t\leq n-1$}.
\end{array}\right.
\label{mark}
\end{equation}
Next we will relate the markings  $\displaystyle  \{(c_k, d); 1\leq k \leq \lfloor \frac{n}{2}\rfloor \}$ with Kac's classification of finite order automorphisms  of $\fs\fl(n,\bc)$.
By Theorem \ref{Kac}, we know that
up to conjugate class, all order 4 automorphisms are represented in Figure \ref{autA}.
Moreover, the automorphisms represented by Figure \ref{autA}(f-h) are inner.

\begin{figure}[ht]
\begin{center}
\includegraphics[width=10cm]{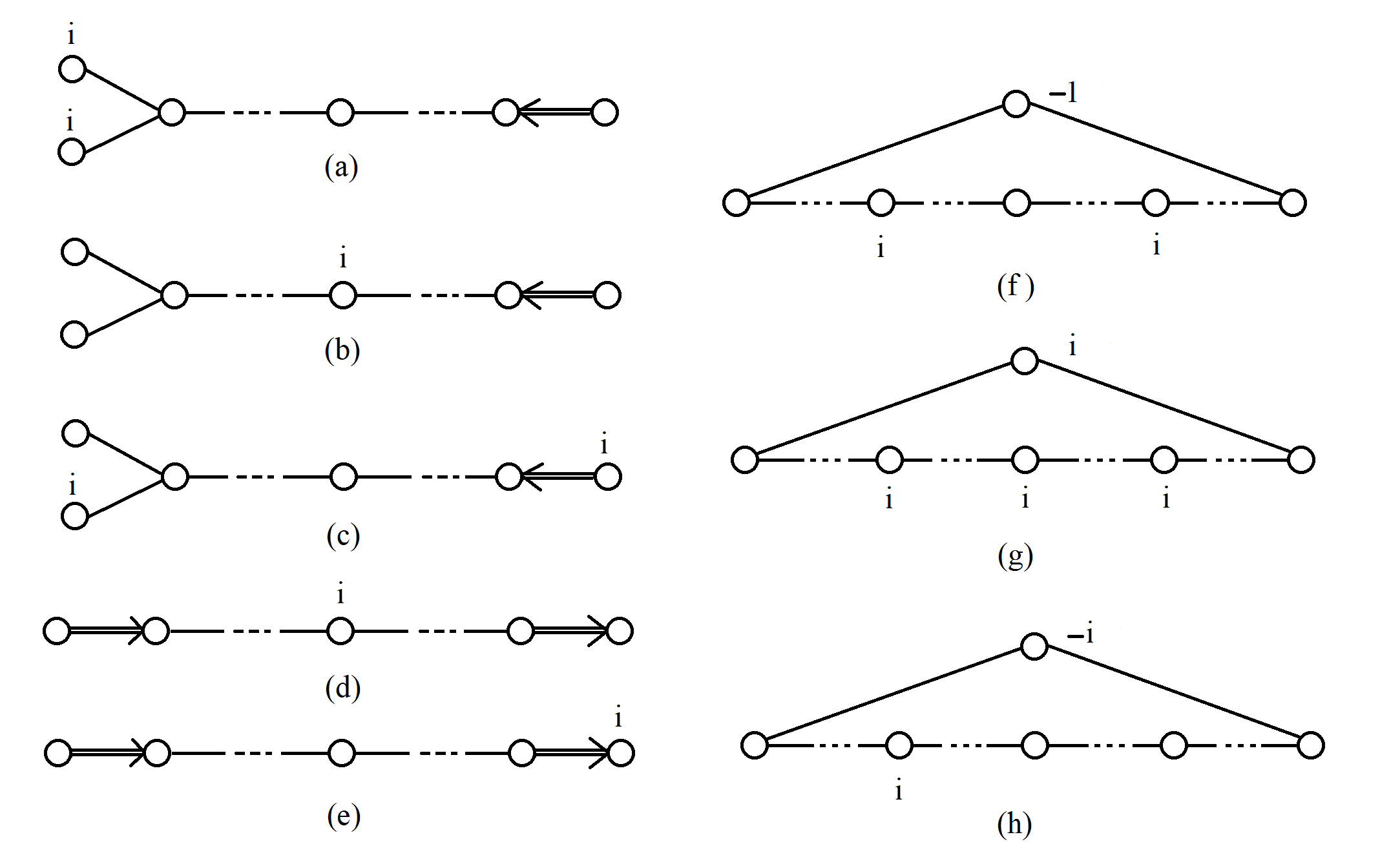}
\caption{Order 4 outer automorphisms of $A_{n-1}$}
\label{autA}
\end{center}
\end{figure}

\begin{theorem}
Up to conjugate class, the markings $\displaystyle \{(c_k, d); 1\leq k \leq \lfloor \frac{n}{2}\rfloor \}$ defined in (\ref{mark}) represent all the order 4 outer automorphisms of $\fs\fl(n,\bc)$.
Also we have the following equivalence of  automorphisms in Figure \ref{equiauto}.

\begin{figure}[ht]
\begin{center}
\includegraphics[width=10cm]{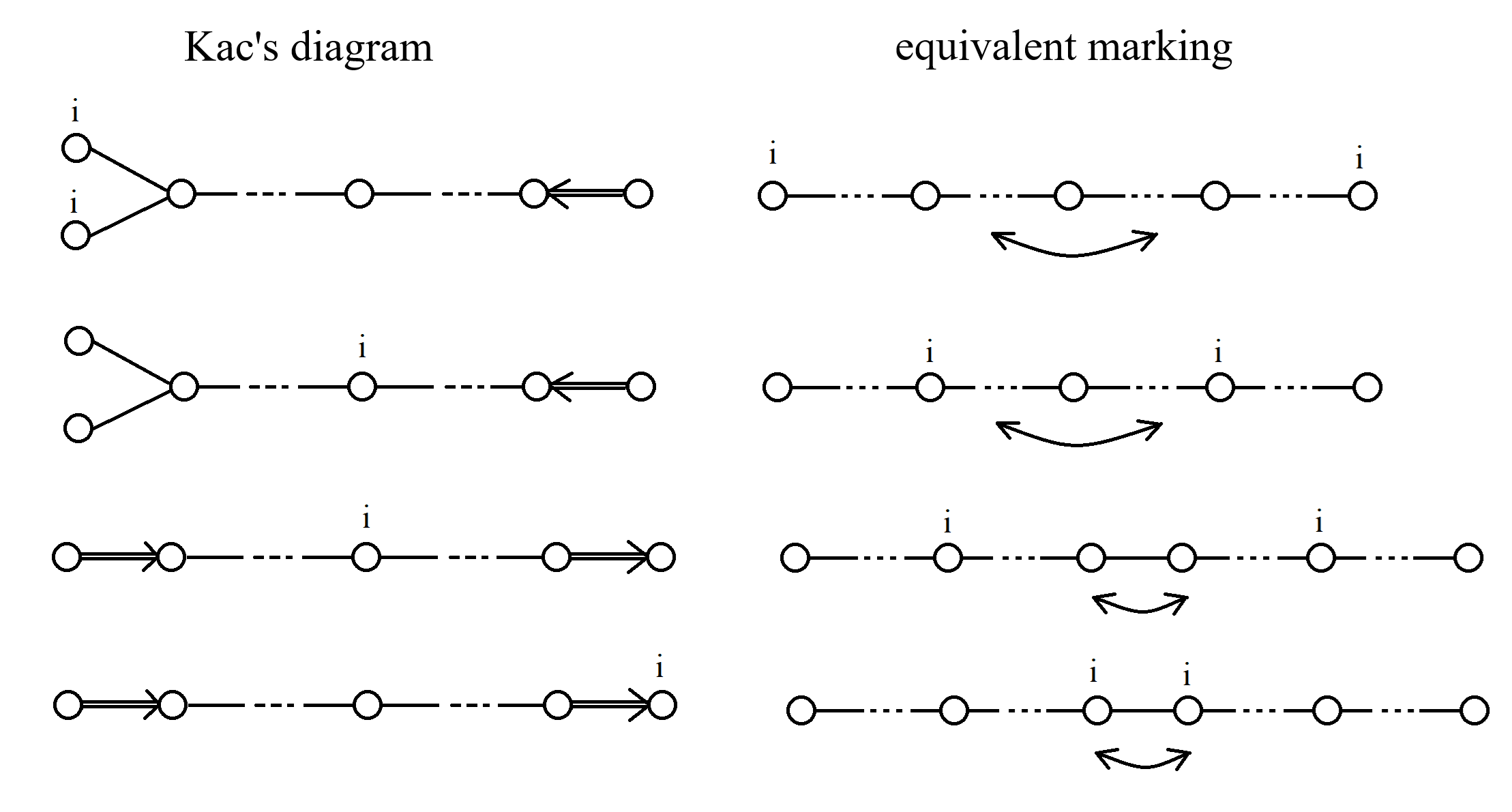}
\caption{Equivalent  description of automorphisms of $A_{n-1}$}
\label{equiauto}
\end{center}
\end{figure}
\label{ordfour}
\end{theorem}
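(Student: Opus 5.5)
Each marking $(c_k,d)$ in (\ref{mark}) will be matched with a Kac diagram $\dy^2$ of $A_{n-1}$, using Theorem \ref{Kac} and Lemma \ref{lem:outer}. Since $d\neq 1$, Lemma \ref{lem:outer} shows that every $\si$ represented by $(c_k,d)$ is outer. We then need three things: $\si$ has order exactly $4$, the Kac diagram of $\si$ appears among the outer diagrams of Figure \ref{autA}, and every outer diagram of order $4$ arises from some $k$.

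First we check well-definedness and order. Set $\si_k=\si_c\si_d$. The marking $c_k$ is $d$-invariant, because $d$ swaps $\ep_k-\ep_{k+1}$ and $\ep_{n-k}-\ep_{n+1-k}$ and both carry the value $i$. Hence $\si_c$ and $\si_d$ commute, and $\si_k^2=\si_c^2$ acts on the simple root vectors by $c^2$, i.e. by $-1$ on $X_{\ep_k-\ep_{k+1}}$ and $X_{\ep_{n-k}-\ep_{n+1-k}}$ and by $1$ elsewhere. (When $n=2k$ the two roots coincide and $c=i$ on a single $d$-fixed root; this case is handled separately.) So $\si_k^2=e^{\ad H}$ is inner of order $2$, and therefore $\si_k$ has order $4$.

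Next we compute the Kac coordinates. Following Proposition \ref{prop31}(5), write $\si_k=\si_0 e^{\ad H_0}$, where $\si_0$ is the diagram involution and $H_0$ lies in the $d$-fixed part $\fh$ of the Cartan subalgebra. The restricted simple roots of $\fg^{\bar 0}$ are the $d$-orbits $\{\ep_t-\ep_{t+1},\ep_{n-t}-\ep_{n+1-t}\}$. On such an orbit, $\si_k$ acts on the $\fg^{\bar 0}$-root vector $X_\al+X_{d\al}$ by $c_\al$, which equals $i$ for the orbit of index $k$ and $1$ for all others. For the extra vertex (the lowest weight of $\fg^{\bar 0}$ on $\fg^{\bar 1}$), we compute the eigenvalue directly. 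The lowest weight vector is built from a $d$-anti-invariant combination of root vectors of a root that is a sum of simple roots. Its eigenvalue is $-1$ (from $\si_0$) times the product of the $c$-values along that root, which gives $\pm i$ or $\pm1$ depending on whether $\ep_k-\ep_{k+1}$ is involved. Reading off the resulting labels on $A_{n-1}^{(2)}$ (Kac type $A_{2l-1}^{(2)}$ or $A_{2l}^{(2)}$ according to the parity of $n$) and normalizing $c_\al=\exp(2\pi i n_\al/4)$ gives coordinates $n_\al\in\{0,1\}$ satisfying $2\sum a_\al n_\al=4$, i.e. $\sum a_\al n_\al=2$. These are exactly the diagrams listed in Figure \ref{autA}(a)--(e).

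For surjectivity we run the argument backwards. Solving $\sum a_\al n_\al=2$ on the twisted affine diagram, the admissible choices are either two vertices with $a_\al=1$, or one vertex with $a_\al=2$. By Theorem \ref{Kac}(3) these give all order $4$ outer classes. We must then check that as $k$ runs over $1\le k\le\lfloor n/2\rfloor$, every such solution is hit. When two values of $k$ land on the same Kac diagram, or one solution corresponds to two readings, this yields the equivalences recorded in Figure \ref{equiauto}.

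The main obstacle is computing the label at the affine vertex together with the boundary cases $k=1$ and $k=\lfloor n/2\rfloor$. For $n$ even and $k=n/2$ the orbit is a single fixed vertex, and for $n$ odd the middle orbit is adjacent. In these cases the root lengths and the coefficients $a_\al$ change, so they need explicit matrix checks: we would realize $\si_0$ as $X\mapsto -JX^tJ^{-1}$ with the antidiagonal $J$, take $H_0$ diagonal, and identify $\fg^{\si_k}$ directly as the appropriate $C$, $D$ or $B$ type subalgebra, comparing with Table \ref{table2}.
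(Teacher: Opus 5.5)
Your route is sound, but it is not the paper's. The paper writes out $\si_k$ on every $E_{s,t}$ (formula (\ref{sik})) and computes $\dim\fg^{\si_k}$. It then matches $(c_k,d)$ with a diagram of Figure~\ref{autA} by comparing this number with the dimensions of the fixed algebras read off from the Kac diagrams.

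You instead read the Kac coordinates of $\si_k=\si_c\si_d$ directly from its eigenvalues on the generators of $\dy^2$. Write $\al_0$ for the extra vertex and $\al_t$ for the restriction of $\ep_t-\ep_{t+1}$. Since $\si_k$ acts on a generating set by the prescribed scalars, Theorem~\ref{Kac}(1) gives that $\si_k$ \emph{equals} the Kac automorphism, not merely a conjugate of it. Carried out, the computation gives:
\begin{itemize}
\item on the $\fg^{\bar 0}$-simple root vectors, eigenvalue $i$ at $\al_k$ and $1$ elsewhere;
\item on the lowest weight vector of $\fg^{\bar 1}$ ($E_{n-1,1}+E_{n,2}$ for $n=2l$, $E_{n,1}$ for $n=2l+1$), eigenvalue $i$ when $n=2l$ and $k\in\{1,l\}$, and $1$ in all other cases.
\end{itemize}
So for $n=2l$ the correspondence is:
\begin{itemize}
\item $k=1\mapsto\{\al_0,\al_1\}$, with fixed algebra $C_{l-1}\oplus\bc$;
\item $2\le k\le l-1\mapsto\{\al_k\}$, with fixed algebra $D_k\oplus C_{l-k}$;
\item $k=l\mapsto\{\al_0,\al_l\}$, with fixed algebra $A_{l-1}\oplus\bc$.
\end{itemize}
For $n=2l+1$ it is $k\mapsto\{\al_k\}$, with fixed algebra $C_k\oplus B_{l-k}$. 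This is exactly the correspondence the paper asserts.

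What your approach buys is an exact identification. $\dim\fg^{\si_k}$ is a conjugacy invariant, but it does not separate the candidates. For $n=4$, $k=1$ and $k=2$ both give dimension $4$. For $n=6$, $k=2$ (fixed algebra $D_2\oplus C_1$) and $k=3$ (fixed algebra $A_2\oplus\bc$) both give dimension $9$. So the paper's dimension count alone does not settle the matching, whereas yours does. The paper's explicit formula (\ref{sik}), for its part, is what it reuses in Section~4.

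Three points in your sketch need correcting.

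First, the lowest weight vector of $\fg^{\bar 1}$ lies in negative root spaces. So $\si_c$ acts on it by the product of the $c_\beta^{-1}$ over the simple roots in its support, not the product of the $c_\beta$. With the $c_\beta$ you would get $-i$ for $k=1$ and $k=n/2$, i.e.\ Kac coordinate $3$ at $\al_0$, which violates $\sum a_\al n_\al=2$. A quick consistency check is $\prod_{\al\in\dy^2}\lambda_\al^{a_\al}=-1$ for the eigenvalues $\lambda_\al$.

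Second, consider the surjectivity step for $n=2l$. The equation $\sum a_\al n_\al=2$ on $A^{(2)}_{2l-1}$ has three two-vertex solutions: $\{\al_0,\al_1\}$, $\{\al_0,\al_l\}$ and $\{\al_1,\al_l\}$. Only $k=1$ and $k=l$ produce two-vertex diagrams, so $\{\al_1,\al_l\}$ is never hit. You must use that Kac coordinates are unique only up to $\aut(\dy^2)$, here the swap $\al_0\leftrightarrow\al_1$; the word ``uniquely'' in Theorem~\ref{Kac}(3) glosses over this. For $n=2l+1$ there is no such issue: $A^{(2)}_{2l}$ has a single vertex with $a_\al=1$, so only one-vertex solutions with $a_\al=2$ occur, and each is hit.

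Third, the equivalences of Figure~\ref{equiauto} do not come from two values of $k$ landing on the same diagram. Distinct $k$ give distinct Kac diagrams, and the equivalences are simply the correspondence listed above.
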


\begin{proof}
By \cite[Thm.4.2]{tams}, we know that each marking $(c_k,d)$ represents an automorphism of $\fs\fl(n,\bc)$.
It is easy to verify that the order of all these markings is 4. By Lemma \ref{lem:outer}, we know that the automorphism
represented by the marking $(c_k,d)$ is outer because $d\neq 1$.
Now we prove that they represent all the outer automorphism up to conjugate class.

Assume $\si_k$ is the automorphism of $\fg$ which is represented by $(c_k,d)$.
Let $E_{s,t}$ be the matrix with 1 at $(s,t)$-entry and 0 elsewhere.
Then $E_{s,t}$ $(s\neq t)$ is a root vector with respect to the root $\ep_s-\ep_t$.
Since $E_{s,t}=[[\ldots[E_{s,s+1}, E_{s+1,s+2}], \ldots], E_{t-1,t}]$ and $\si_k E_{s,t}=[[\ldots[\si_k E_{s,s+1}, \si_k E_{s+1,s+2}], \ldots], \si_k E_{t-1,t}]$, then we have
\begin{equation}
\si_k E_{s,t}=\left\{ \begin{array}{ll}
(-1)^{t-s-1} E_{n+1-t,n+1-s}, &  1\leq s,t\leq k, \mbox{ or } k+1\leq s,t \leq n-k, \\
                             &   \mbox{ or } n-k+1\leq s,t\leq n; \\
(-1)^{t-s} E_{n+1-t,n+1-s}, &  1\leq s \leq k,  n-k+1\leq  t \leq n, \\
                           &  \mbox{ or } n-k+1\leq  s \leq n, 1\leq t \leq k; \\
(-1)^{t-s-1} i E_{n+1-t,n+1-s}, & \mbox{ else},
\end{array}\right.
\label{sik}
\end{equation}
for $s\neq t$ and
\begin{equation}
\si_k (E_{p,p}-E_{q,q})=E_{n+1-q,n+1-q}-E_{n+1-p,n+1-p}.
\label{sik2}
\end{equation}
Let $\fg^{\si_k}=\{X\in \fg| \si_k X=X\}$.
If $\si_k X=Y$ and $\si_k Y=X$, then we have $X+Y\in \fg^{\si_k}$.
Then we compute the dimension of $\fg^{\si_k}$ as follows:
\begin{equation}
\dim \fg^{\si_k}=\left\{\begin{array}{ll}
2k^2-k+\frac{(n-2k)^2+(n-2k)}{2}, & \mbox{ when $n$ is even and $k\neq \frac{n}{2}$}; \\
(\frac{n}{2})^2, & \mbox{ when $n$ is even and $k=\frac{n}{2}$};  \\
2k^2+k+\frac{(n-2k)^2-(n-2k)}{2}, & \mbox{ when $n$ is odd}.
\end{array}\right.
\end{equation}

By Theorem \ref{Kac}, we list the information about the automorphisms represented on the affine Dynkin diagrams:

\begin{tabular}{cccc}
\hline
parity of $n$ & $\si$  & $\fgaa$ & $\dim \fgaa$   \\
\hline
even &  Figure \ref{autA}(a) & $C_{\frac{n}{2}-1}+\bc$  &   $2(\frac{n}{2}-1)^2+(\frac{n}{2}-1)+1$ \\
even &  Figure \ref{autA}(b) & $D_k+C_{\frac{n}{2}-k}$  &   $2k^2-k+2(\frac{n}{2}-k)^2+(\frac{n}{2}-k)$ \\
even &  Figure \ref{autA}(c) & $A_{\frac{n}{2}-1}+\bc$  &   $(\frac{n}{2})^2$ \\
odd &  Figure \ref{autA}(d) & $C_k+B_{\frac{n-1}{2}-k}$  &   $2k^2+k+2(\frac{n-1}{2}-k)^2+(\frac{n-1}{2}-k)$ \\
odd &  Figure \ref{autA}(e) & $C_{\frac{n-1}{2}}$  &   $2(\frac{n-1}{2})^2+(\frac{n-1}{2})$  \\
\hline
\end{tabular}

Then comparing the dimensions of the invariant subspaces of the automorphisms, we get that
\begin{itemize}
\item[(a)]
when $n$ is even,
\[
\left\{\begin{array}{ll}
(c_k, d)\cong \mbox{the automorphism in Figure \ref{autA}(a)}, &  \mbox{ for }    k=1; \\
(c_k, d)\cong \mbox{the automorphism in Figure \ref{autA}(b)}, &  \mbox{ for }    2\leq k< \frac{n }{2}; \\
(c_k, d)\cong \mbox{the automorphism in Figure \ref{autA}(c)}, &   \mbox{ for }   k=\frac{n}{2}.
\end{array}
\right.
\]
\item[(b)]
when $n$ is odd, \[
\left\{\begin{array}{ll}
(c_k, d)\cong \mbox{the automorphism in Figure \ref{autA}(d)}, &  \mbox{ for } 1\leq k< \frac{n-1}{2}; \\
(c_k, d)\cong \mbox{the automorphism in Figure \ref{autA}(e)}, &  \mbox{ for }   k= \frac{n-1}{2}.
\end{array}
\right.
\]
\end{itemize}
Then we have proved the markings $\{(c_k, d); 1\leq k \leq \lfloor \frac{n}{2}\rfloor \}$ defined in (\ref{mark}) represent all the order 4 outer automorphisms of $\fs\fl(n,\bc)$. By the way, the automorphisms represented in Figure \ref{autA}(a-e) also cover all the order 4 outer automorphisms of $\fs\fl(n,\bc)$.
Also Figure \ref{equiauto} is natural and the diagrams in the first column of Figure \ref{equiauto} are just ones in the third column of Table \ref{table1} for type A.
\end{proof}

\subsection{Proof of Theorem \ref{main} for case in Table \ref{table1}(a)}

In this part, we prove Theorem \ref{main} for $\fg=\fs\fl(2n, \bc)$.   The roots of the Dynkin diagram of $A_{2n-1}$ are labeled by
\[
\ep_1-\ep_2, \ep_2-\ep_3, \ldots, \ep_{2n-1}-\ep_{2n}.
\]
Let $\si_k$ $(1<k<n)$ be the order 4 outer automorphism of  $A_{2n-1}$ represented by the marking $(c_k, d)$ defined in (\ref{mark}).
Then by (\ref{sik}) and (\ref{sik2}), we know that the action of $\si_k$ on the root vectors is as follows:
\begin{displaymath}
\left\{ \begin{array}{ll}
\si_k E_{s,t} =(-1)^{t-s-1}  E_{2n+1-t,2n+1-s} & \mbox{ for $1\leq s, t \leq k$ or $k+1\leq s, t\leq 2n-k$ } \\
                                             & \quad \mbox{ or $2n-k+1\leq s, t\leq 2n$}; \\
\si_k E_{s,t}  =(-1)^{t-s-1} i  E_{2n+1-t,2n+1-s} & \mbox{ for $1\leq s \leq k,  k+1\leq t \leq 2n-k$}  \\
                                             & \quad \mbox{ or $ k+1\leq s \leq 2n-k, 2n-k+1\leq t \leq 2n$}; \\
\si_k E_{s,t}  =(-1)^{t-s} i  E_{2n+1-t,2n+1-s} & \mbox{ for $  k+1\leq s \leq 2n-k, 1\leq t \leq k$}  \\
                                             & \quad \mbox{ or $ 2n-k+1\leq s \leq 2n, k+1\leq t \leq 2n-k$}; \\
\si_k E_{s,t}  =(-1)^{t-s} E_{2n+1-t,2n+1-s} & \mbox{ for $1\leq s \leq k,  2n-k+1\leq t \leq 2n$}  \\
                                             & \quad \mbox{ or $ 2n-k+1\leq s \leq 2n, 1\leq t \leq k$}.
\end{array}
\right.
\end{displaymath}

Now we list a basis of $\fgaa$ and one of $\fgbb$ as follows:
\begin{itemize}
\item[(a)]  a basis of $\fgaa$:
\begin{eqnarray*}
&& \{ E_{s,t}+(-1)^{t-s-1} E_{2n+1-t,2n+1-s}\}_{1\leq s, t \leq k}   \\
&\cup & \{ E_{s,t}+(-1)^{t-s-1} E_{2n+1-t,2n+1-s}\}_{k+1\leq s, t\leq 2n-k, s+t\leq 2n+1}   \\
&\cup & \{ E_{s,t}+(-1)^{t-s} E_{2n+1-t,2n+1-s} \}_{ 1\leq s \leq k,  2n-k+1\leq t \leq 2n,  s+t<2n+1 }  \\
  &\cup & \{ E_{s,t}+(-1)^{t-s} E_{2n+1-t,2n+1-s} \}_{ 2n-k+1\leq s \leq 2n, 1\leq t \leq k,  s+t< 2n+1 } \\
  &\cup & \{ E_{s,s}-E_{s+1,s+1}+E_{2n-s, 2n-s}-E_{2n+1-s, 2n+1-s}\}_{1\leq s\leq n};
\end{eqnarray*}
\item[(b)] a basis of $\fgbb$:
\[ \{ E_{s,t} +(-1)^{t-s-1} E_{2n+1-t,2n+1-s}, E_{t,s}+(-1)^{t-s} E_{2n+1-s,2n+1-t} \}_{1\leq s \leq k,  k+1\leq t \leq 2n-k}
\]
\item[(c)] the center of $\fgaa+\fgcc$:
\[
\bc \cdot \mathrm{diag}( \underbrace{1, \ldots, 1}_{k}, \underbrace{ \lambda, \ldots, \lambda}_{2n-2k}, \underbrace{1, \ldots ,1}_{k}),    ~~\mbox{ with } \lambda=\frac{k}{n-k}.
\]
\end{itemize}
By multiplying a nonzero coefficient,  we can choose $z$ in the center of $\fgaa+\fgcc$ such that
\begin{equation}\label{zzz}
[z, E_{s,t}]= \left\{ \begin{array}{ll}
E_{s,t}, & \mbox{ if } {1\leq s \leq k,  k+1\leq t \leq 2n-k};   \\
-E_{s,t}, & \mbox{ if } {k+1\leq s \leq 2n-k, 1\leq t \leq k}.
\end{array} \right.
\end{equation}

Also we have
 \[
\dim \fgbb=4k(n-k).
\]
\begin{lemma}
The elements $ \{ E_{s,t}+(-1)^{t-s-1} E_{2n+1-t,2n+1-s}\}_{k+1\leq s, t\leq 2n-k, s+t\leq 2n+1}$ construct a Lie algebra of type $C_{n-k}$.
The other elements in the basis of $\fgaa$ construct a Lie algebra of type  $D_k$.
Also $\fgbb$ is an irreducible $\fgaa$-representation, which is isomorphic to $\bc^{2k}\otimes \bc^{2n-2k}$.
Here $\bc^{2k}$ (resp. $\bc^{2n-2k}$) stands for the standard representation of $D_k$ (resp. $C_{n-k}$).
\label{irr}
\end{lemma}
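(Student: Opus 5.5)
The plan is to realize $\si_k$ explicitly as a "twisted negative transpose" and read off both the fixed algebra and the $i$-eigenspace from the resulting block decomposition. Split the index set $\{1,\dots,2n\}$ into the outer part $O=\{1,\dots,k\}\cup\{2n-k+1,\dots,2n\}$, of size $2k$, and the middle part $M=\{k+1,\dots,2n-k\}$, of size $2n-2k$. This gives $\bc^{2n}=V_O\oplus V_M$.

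\textbf{Step 1: a matrix form of $\si_k$.} Using (\ref{sik}) and (\ref{sik2}), I would show that
\[
\si_k(X)=-A X^{T} A^{-1},
\]
where $A$ is an anti-diagonal matrix. Its entries are $A_{s,2n+1-s}=\pm 1$ for $s\in O$, and $A_{s,2n+1-s}=\pm\sqrt{-1}$, up to a common scalar, for $s\in M$. The signs are to be chosen so that the factors $(-1)^{t-s-1}$, $(-1)^{t-s}$ and $i$ in the four cases of the displayed action are reproduced. This is a direct check on the generators $E_{s,t}$ together with the diagonal elements.

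Since $d$ preserves $O$ and $M$, the matrix $A$ is block diagonal, $A=A_O\oplus A_M$. Write $X=\begin{pmatrix} X_{OO}&X_{OM}\\ X_{MO}&X_{MM}\end{pmatrix}$. Then $\si_k$ preserves the diagonal blocks $\fg_{OO}$ and $\fg_{MM}$, and it swaps the off-diagonal blocks $\fg_{OM}$ and $\fg_{MO}$.

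\textbf{Step 2: the fixed algebra $\fgaa$.} I would then verify the symmetry type of the two blocks of $A$.
\begin{itemize}
\item On $M$, the alternating signs $(-1)^{t-s-1}$ on an even-size anti-diagonal make $A_M$ skew-symmetric. Hence $\{X_{MM}: X_{MM}=-A_MX_{MM}^TA_M^{-1}\}=\fsp(2n-2k)\cong C_{n-k}$. This is exactly the span of the listed basis elements with $k+1\le s,t\le 2n-k$.
\item On $O$, the extra sign flip for the cross terms (the case $(-1)^{t-s}$ in (\ref{sik})) makes $A_O$ symmetric. Hence the outer fixed elements form $\fo(2k)\cong D_k$.
\end{itemize}
The traceless diagonal generators lie in these two blocks, and they account for the central direction $z$ only via $\fgcc$. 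A dimension count against the table in Theorem \ref{ordfour} confirms that $\fgaa=D_k\oplus C_{n-k}$.

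\textbf{Step 3: the eigenspace $\fgbb$.} Because $\si_k$ swaps $\fg_{OM}$ and $\fg_{MO}$, we have $\si_k^2=-1$ on $\fg_{OM}\oplus\fg_{MO}$. Therefore $\fgbb\oplus\fgdd=\fg_{OM}\oplus\fg_{MO}$, and each $x\in\fgbb$ is determined by its $\fg_{OM}$-component. The projection $\fgbb\to\fg_{OM}\cong \mathrm{Hom}(V_M,V_O)$ is injective. It is bijective by the count $\dim\fgbb=4k(n-k)=\dim\fg_{OM}$, and it is $\fgaa$-equivariant because $\fgaa$ is block diagonal.

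Now $\mathrm{Hom}(V_M,V_O)\cong V_O\otimes V_M^{*}$, and $V_M^*\cong V_M$ via the invariant form $A_M$. This gives
\[
\fgbb\cong\bc^{2k}\otimes\bc^{2n-2k}
\]
as a module over $D_k\oplus C_{n-k}$. This module is irreducible, since an outer tensor product of irreducible representations of the two simple factors is irreducible. The case $k=1$, where $D_1$ is abelian, would need the separate observation that $\bc^2=\bc_{+}\oplus\bc_-$; this is consistent with $k$ being restricted in this subsection.

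\textbf{Main obstacle.} I expect the main difficulty to be the sign bookkeeping in Step 1. Specifically, one must choose $A$ so that all four cases of the $\si_k$-action, including the factors of $i$ on the off-diagonal blocks, come out consistently. One must also confirm that $A_O$ is symmetric while $A_M$ is skew, rather than the reverse. I would settle this by checking $\si_k$ on the simple root vectors $E_{s,s+1}$ only, since these generate $\fg$ and determine $A$ up to scalar. Then I would test the symmetry of $A_O$ and $A_M$ on one pair of entries, $A_{k,2n+1-k}$ versus $A_{2n+1-k,k}$ and the analogous pair for $M$.
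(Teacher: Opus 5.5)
Your proof is correct, and it takes a genuinely different route from the paper's.

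\textbf{The paper's route.} The paper works with roots and weights. It fixes the Cartan subalgebra spanned by the $H_s=E_{s,s}-E_{2n+1-s,2n+1-s}$ ($s\le k$) and the $H'_t$ ($k+1\le t\le n$). It reads off the root of each listed vector of $\fgaa$ to recognize the root systems of $D_k$ and $C_{n-k}$. It then computes the weights $\xi_s-\eta_t$ and $-\xi_t+\eta_s$ of the listed vectors of $\fgbb$ and invokes highest weight theory.

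\textbf{Your route.} You realize $\si_k$ globally as $X\mapsto -AX^TA^{-1}$, and this does check out. One can take $A_{s,2n+1-s}=(-1)^s$ for $s\le k$, $-(-1)^s$ for $s\ge 2n-k+1$, and $-i(-1)^s$ for $k+1\le s\le 2n-k$. With this choice $A_O$ is symmetric and $A_M$ is skew, exactly as you predicted. Then $\si_k^2(X)=\epsilon X\epsilon$ with $\epsilon=I_O\oplus(-I_M)$, and everything reduces to linear algebra:
\begin{itemize}
\item $\fgaa+\fgcc$ is the block-diagonal part;
\item $\fgaa=\fo(V_O)\oplus\fsp(V_M)$ is the stabilizer of the form $u^TA^{-1}v$;
\item $\fgbb$ is the graph of $Y\mapsto -i\si_k(Y)$ over $\fg_{OM}\cong V_O\otimes V_M^*$.
\end{itemize}

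\textbf{What each route buys.} Yours explains the entry $\fgaa+\fgcc\cong\mathfrak{s}(\mathfrak{gl}_{2k}\oplus\mathfrak{gl}_{2n-2k})$ of Table \ref{table1}(b). It also exhibits $u^TA^{-1}v$ as a form that is symmetric on $V_O$, skew on $V_M$, with $V_O\perp V_M$. That is the form underlying $\mathfrak{osp}(2k|2n-2k)$, which foreshadows $L\cong D(k,n-k)$. The paper's route stays inside the explicit basis and Cartan subalgebra that it reuses in the later bracket computations.

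\textbf{Two small corrections.}
\begin{itemize}
\item The $E_{s,s+1}$ alone do not generate $\fg$; they generate only the nilradical. However, any antidiagonal $A$ automatically reproduces (\ref{sik2}) on diagonal matrices. Agreement on diagonal matrices and on the $E_{s,s+1}$ already determines an automorphism, so your proposed check suffices.
\item For $k=2$ the factor $D_2$ is not simple, so "the two simple factors" is inaccurate there. But $\bc^4$ is still an irreducible $\fo(4)$-module, so the irreducibility of $\fgbb$ is unaffected.
\end{itemize}
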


\begin{proof}
We choose a basis of a Cartan subalgebra $\fh$ of $\fgaa$ as follows:
\[
\{H_s=E_{s,s}-E_{2n+1-s,2n+1-s} \}_{1\leq s\leq k} \cup \{H'_t=E_{t,t}-E_{2n+1-t,2n+1-t} \}_{k+1\leq  t\leq n}
\]
Define $\{\xi_p\}_{1\leq p\leq k}, \{\eta_q\}_{k+1\leq q\leq n}\subset \fh^* $ to be:
\[
\left\{ \begin{array}{ll}
\xi_p(H_s)=\delta_{p,s},  & \xi_p(H'_t)=0; \\
\eta_q(H_s)=0, & \eta_q(H'_t)=\delta_{q,t}.
\end{array}\right.
\]

Then we get the root of the vectors in $\fgaa$ listed as follows:

\begin{center}
\begin{tabular}{ccc}
\hline
case  &  vector in $\fgaa$ & associated root \\
\hline
 $1\leq s, t \leq k, s\neq t $  &   $ E_{s,t}+(-1)^{t-s-1} E_{2n+1-t,2n+1-s}$ & $ \xi_s-\xi_t $\\
$k+1\leq s, t\leq 2n-k, s+t< 2n+1, s\neq t$ &  $E_{s,t}+(-1)^{t-s-1} E_{2n+1-t,2n+1-s} $  & $ \eta_s-\eta_t$ \\
$k+1\leq s, t\leq 2n-k, s+t=2n+1, s\neq t$ &  $ E_{s,t}+(-1)^{t-s-1} E_{2n+1-t,2n+1-s} $ & $2 \eta_s $ \\
$ 1\leq s \leq k,  2n-k+1\leq t \leq 2n,  s+t<2n+1 $   &  $E_{s,t}+(-1)^{t-s} E_{2n+1-t,2n+1-s}$   & $ \xi_s+\xi_{2n+1-t}$ \\
$ 2n-k+1\leq s \leq 2n, 1\leq t \leq k,  s+t< 2n+1 $ &  $E_{s,t}+(-1)^{t-s} E_{2n+1-t,2n+1-s}$ &   $-\xi_{2n+1-s}-\xi_t$ \\
\hline
 \end{tabular}
\end{center}

 We illustrate the case $1\leq s, t \leq k, s\neq t $ to prove this table.
Since \[
[H_p, E_{s,t}+(-1)^{t-s-1} E_{2n+1-t,2n+1-s}]=(\delta_{s,p}-\delta_{p,t})E_{s,t}+(-1)^{t-s-1} E_{2n+1-t,2n+1-s},
\]
and
\[
[H'_q, E_{s,t}+(-1)^{t-s-1} E_{2n+1-t,2n+1-s}]=0,
\]
then the associated root of $ E_{s,t}+(-1)^{t-s-1} E_{2n+1-t,2n+1-s}$ is $ \xi_s-\xi_t$.
By the similar computation, we get the above table.
Then we can see that \[
 \{ E_{s,t}+(-1)^{t-s-1} E_{2n+1-t,2n+1-s}\}_{k+1\leq s, t\leq 2n-k, s+t\leq 2n+1}
 \]
construct a Lie algebra of type $C_{n-k}$, and the others
construct a Lie algebra of type $D_k$. Also $\fgaa\cong D_k+C_{n-k}$.

Let $X\in \fgaa, Y\in\fgbb$. Then we have
\[
\si_k[X,Y]=[\si_k X, \si_k Y]=[X,iY]=i[X, Y],
\]
which implies $[\fgaa,\fgbb]\subset \fgbb$. That is, $\fgbb$ is a $\fgaa$-representation.

Next we prove this representation is isomorphic to $\bc^{2k}\otimes \bc^{2n-2k}$.
For $1\leq s \leq k,  k+1\leq t \leq 2n-k$, we have
\[
[H_p,  E_{s,t} +(-1)^{t-s-1} E_{2n+1-t,2n+1-s} ]= \delta_{p,s} (E_{s,t} +(-1)^{t-s-1} E_{2n+1-t,2n+1-s}),
\]
and
\[
[H'_q,  E_{s,t} +(-1)^{t-s-1} E_{2n+1-t,2n+1-s} ]= -\delta_{q,t}(E_{s,t} +(-1)^{t-s-1} E_{2n+1-t,2n+1-s}).
\]
Similarly, for $k+1\leq s \leq 2n-k, 1\leq t \leq k$,
we have
\[
[H_p,  E_{s,t} +(-1)^{t-s} E_{2n+1-t,2n+1-s} ]= -\delta_{p,t} (E_{s,t} +(-1)^{t-s} E_{2n+1-t,2n+1-s}),
\]
and
\[
[H'_q,  E_{s,t} +(-1)^{t-s} E_{2n+1-t,2n+1-s} ]= \delta_{q,t}(E_{s,t} +(-1)^{t-s} E_{2n+1-t,2n+1-s}).
\]
Then we get the weights of the vectors in $\fgbb$ listed as follows:

\begin{center}
\begin{tabular}{ccc}
\hline
case  &  vector in $\fgbb$ & associated weight \\
\hline
 $1\leq s \leq k,  k+1\leq t \leq 2n-k$  &   $ E_{s,t} +(-1)^{t-s-1} E_{2n+1-t,2n+1-s}$ & $ \xi_s-\eta_t $\\
$k+1\leq s \leq 2n-k, 1\leq t \leq k$ &  $ E_{s,t} +(-1)^{t-s} E_{2n+1-t,2n+1-s} $  & $ -\xi_t+\eta_s$  \\
\hline
 \end{tabular}
 \end{center}

By the highest weight theory of simple Lie algebra, we get that  the $\fgaa$-representation on $\fgbb$ is irreducible and isomorphic to $\bc^{2k}\otimes \bc^{2n-2k}$.
\end{proof}

Let $L=\fgaa+\fgbb$.
Consider the bilinear operator on $L\times L \longrightarrow L$ in Theorem \ref{main}.
Without loss of generality, we assume that $J=-\id$ on the simple component of $C_{n-k}$.
By direct verification, we have  \begin{equation}
[X,Y]_s=\left\{ \begin{array}{ll}
[X, Y]=XY-YX, & \mbox{$X\in \fgaa$ or $Y\in\fgaa$}; \\
J([[z, X], Y])=XY+YX,  & \mbox{ for } X, Y\in \fgbb.
\end{array}\right.
\label{scom}
\end{equation}
Here $XY$ stands for the usual matrix multiplication in $\fg$.

We illustrate an example to prove (\ref{scom}).
Suppose $1\leq s, s' \leq k,  k+1\leq t, t' \leq 2n-k$.
Then
\begin{eqnarray*}
&& [ [z, E_{s,t} +(-1)^{t-s-1} E_{2n+1-t,2n+1-s}], E_{t',s'}+(-1)^{t'-s'} E_{2n+1-s',2n+1-t'}  ]\\
&=& [   E_{s,t} +(-1)^{t-s } E_{2n+1-t,2n+1-s}, E_{t',s'}+(-1)^{t'-s'} E_{2n+1-s',2n+1-t'}  ]  \\
&=& \delta_{t, t'} (E_{s, s'}-(-1)^{t+t'-s-s'}  E_{2n+1-s',2n+1-s} )+\delta_{s,s'}(- E_{t',t}+(-1)^{t+t'-s-s'}  E_{2n+1-t,2n+1-t'})  \\
&&+\delta_{s'+t, 2n+1}((-1)^{t'-s'}E_{s, 2n+1-t'}-(-1)^{t-s} E_{t', 2n+1-s}),
\end{eqnarray*}
and
\begin{eqnarray*}
&&  (E_{s,t} +(-1)^{t-s-1} E_{2n+1-t,2n+1-s})(E_{t',s'}+(-1)^{t'-s'} E_{2n+1-s',2n+1-t'}) \\
&&+(E_{t',s'}+(-1)^{t'-s'} E_{2n+1-s',2n+1-t'}) (E_{s,t} +(-1)^{t-s-1} E_{2n+1-t,2n+1-s})\\
&=& \delta_{t, t'} (E_{s, s'}-(-1)^{t+t'-s-s'}  E_{2n+1-s',2n+1-s} )+\delta_{s,s'}(E_{t',t}-(-1)^{t+t'-s-s'}  E_{2n+1-t,2n+1-t'})  \\
&&+\delta_{s'+t, 2n+1}((-1)^{t'-s'}E_{s, 2n+1-t'}-(-1)^{t-s} E_{t', 2n+1-s})
\end{eqnarray*}
Since $J=-\id$ on the simple component of $C_{n-k}$,
then \[
J(E_{t',t}-(-1)^{t+t'-s-s'}  E_{2n+1-t,2n+1-t'})=-(E_{t',t}-(-1)^{t+t'-s-s'}  E_{2n+1-t,2n+1-t'}). \]
This implies that
\begin{eqnarray*}
&& J([ [z, E_{s,t} +(-1)^{t-s-1} E_{2n+1-t,2n+1-s}], E_{t',s'}+(-1)^{t'-s'} E_{2n+1-s',2n+1-t'}  ]) \\
&=&(E_{s,t} +(-1)^{t-s-1} E_{2n+1-t,2n+1-s})(E_{t',s'}+(-1)^{t'-s'} E_{2n+1-s',2n+1-t'}) \\
&&+(E_{t',s'}+(-1)^{t'-s'} E_{2n+1-s',2n+1-t'}) (E_{s,t} +(-1)^{t-s-1} E_{2n+1-t,2n+1-s}).
\end{eqnarray*}
This implies that (\ref{scom}) holds for $X, Y\in \fgbb$.

\begin{theorem}
 Let $\si_k$ $(1<k<n)$ be the order 4 outer automorphism of  $A_{2n-1}$ represented by the marking $(c_k, d)$ defined in (\ref{mark}).
There exists a Lie superalgebra structure on $L= \fgaa+\fgbb$ with the Lie super bracket $[\cdot,\cdot]_s$ such that the even part is $\fgaa$ and the odd part is $\fgbb$.
Furthermore, we have $L\cong D(k,n-k)$.
\label{thm1}

\end{theorem}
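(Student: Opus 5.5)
Having (\ref{scom}) in hand, the plan is to realize $L$ inside a matrix superalgebra where the bracket is visibly a supercommutator, so that the super Jacobi identity comes for free. By (\ref{scom}), on $L=\fgaa+\fgbb\subset \fs\fl(2n,\bc)$ the new bracket is $XY-YX$ when one argument is even and $XY+YX$ when both are odd. Use the $\bz_2$-grading of $\bc^{2n}$ whose even part is spanned by $e_1,\dots,e_k,e_{2n-k+1},\dots,e_{2n}$ (dimension $2k$) and whose odd part is spanned by $e_{k+1},\dots,e_{2n-k}$ (dimension $2n-2k$). Then:
\begin{itemize}
\item the basis of $\fgaa$ in (a) consists of block diagonal matrices, i.e.\ even elements of $\mathfrak{gl}(2k|2n-2k)$;
\item the basis of $\fgbb$ in (b) consists of off-diagonal block matrices, i.e.\ odd elements.
\end{itemize}
Hence $[\cdot,\cdot]_s$ coincides with the supercommutator of $\mathfrak{gl}(2k|2n-2k)$ restricted to $L$.

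The next step is to check that $L$ is closed under the supercommutator. For $X\in\fgaa$ and $Y\in L$ this holds because $[\fgaa,\fgaa]\subset\fgaa$ and $[\fgaa,\fgbb]\subset\fgbb$. For $X,Y\in\fgbb$ we have $XY+YX=J([[z,X],Y])$. Here $[z,X]\in\fgbb$, so $[[z,X],Y]\in[\fgbb,\fgbb]\subset\fgcc$. This looks wrong at first, since we need the result in $\fgaa$. The resolution is that $\si(z)=-z$ and $\si(X)=iX$ give $\si[z,X]=-i[z,X]$, so $[z,X]\in\fgdd$, and therefore $[[z,X],Y]\in[\fgdd,\fgbb]\subset\fgaa$. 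Since $J$ preserves $\fgaa$, the anticommutator lies in $\fgaa$ and $L$ is closed.

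Because the supercommutator on an associative superalgebra satisfies super skew-symmetry and the super Jacobi identity, the subspace $L$ inherits both. So $L$ is a Lie sub-superalgebra of $\mathfrak{gl}(2k|2n-2k)$ with $\Lz=\fgaa$ and $\Lo=\fgbb$. The bulk of the case work, namely (\ref{scom}) in all index ranges, is taken as already verified above by the same computation as the displayed example. The remaining subcases are the ones where the two odd basis vectors are both of type $E_{s,t}$ with $s\le k$, or both of the transposed type.

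To identify $L\cong D(k,n-k)=\mathfrak{osp}(2k|2n-2k)$, I exhibit an even supersymmetric form $\beta$ on $\bc^{2n}$ with $L\subset\mathfrak{osp}(\beta)$.
\begin{itemize}
\item $\beta$ is symmetric on the even part and skew on the odd part.
\item It pairs $e_j$ with $e_{2n+1-j}$, with signs read off from the coefficients $(-1)^{t-s-1}$ and $(-1)^{t-s}$ in the bases (a) and (b).
\item Concretely, $\beta$ is obtained from $\si_k$ acting as $X\mapsto -\epsilon X^{\rm T}\epsilon^{-1}$, with $\epsilon$ the antidiagonal sign matrix, adjusted by $i$ on the odd block.
\end{itemize}
Then $\fgaa$ is exactly $\mathfrak{so}(2k)\oplus\mathfrak{sp}(2n-2k)$ by Lemma \ref{irr}. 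Each basis vector of $\fgbb$ is odd and $\beta$-superskew, and $\dim\fgbb=4k(n-k)=\dim\mathfrak{osp}(2k|2n-2k)_{\bar 1}$, so $L=\mathfrak{osp}(2k|2n-2k)=D(k,n-k)$. As an alternative, the classification in \cite{ka77} also works: $L$ has $\Lz=D_k\oplus C_{n-k}$ and irreducible $\Lo=\bc^{2k}\otimes\bc^{2n-2k}$ by Lemma \ref{irr}, with $[\Lo,\Lo]\neq0$.

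The main obstacle is the sign bookkeeping. One must confirm that $J=\id$ on $D_k$ and $J=-\id$ on $C_{n-k}$ turns $[[z,X],Y]$ into exactly $XY+YX$ in every index case. The cross terms with $\delta_{s'+t,2n+1}$ are the delicate ones, since they land in the $D_k$ block through the antidiagonal pairing. For these I would organize the check by which block the product $XY$ or $YX$ lands in: the $k\times k$ corner blocks go to $D_k$, and the middle block goes to $C_{n-k}$.
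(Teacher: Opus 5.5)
Your proposal is correct. For the superalgebra structure it is the paper's argument in a cleaner package. The paper also rests on (\ref{scom}). It then checks $\fgaa$-equivariance and the odd--odd--odd Jacobi identity by expanding $UXY+UYX-XYU-YXU$ and $ZXY+ZYX-XYZ-YXZ$, and these expansions are exactly the supercommutator identities. Your observation that $\fgaa$ is block diagonal and $\fgbb$ block off-diagonal for $\bc^{2n}=V_0\oplus V_1$ recasts this as the statement that $L$ is a sub-superalgebra of $\mathfrak{gl}(2k|2n-2k)$.

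Where you genuinely differ is the identification of $L$.
The paper deduces simplicity from irreducibility and then invokes \cite[Prop.2.1.4]{ka77}, reading $L$ off the $\fgaa$-module $\fgbb\cong\bc^{2k}\otimes\bc^{2n-2k}$. You instead show $L=\mathfrak{osp}(\beta)$ on the nose.

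Your route needs neither simplicity nor $[\fgbb,\fgbb]_s=\fgaa$, and it is uniform in $k$. This matters at $(n,k)=(3,2)$: there the data $(A_1^{3},\bc^2\otimes\bc^2\otimes\bc^2)$ is shared by every $D(2,1;\alpha)$. So no argument through the module alone can single out $\mathfrak{osp}(4|2)$ in that case. That applies to the paper's argument and to your fallback; in your fallback, $[\Lo,\Lo]\neq 0$ is in any case weaker than the $[\Lo,\Lo]=\Lz$ needed for simplicity.

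The cost of your route is that $\beta$ must be pinned down exactly.
Write $\si_k(X)=-X^{\dagger}$, with $\dagger$ the adjoint for an antidiagonal form $b$. Then $b$ is symmetric on $V_0$ and skew on $V_1$. However, normalized to $\pm1$ on $V_0$, it has entries $\pm i$ on $V_1$. For $b$ itself, the odd condition of $\mathfrak{osp}(b)$ reads $\si_kX=\gamma X$ with $\gamma=\mathrm{diag}(I_k,-I_{2n-2k},I_k)$, so $b$ is not the right form. Multiplying $b$ on $V_1$ by the appropriate one of $\pm i$ gives a $\pm1$ antidiagonal form $\beta$ whose odd condition is $\si_kX=iX$; the other sign gives $\fgdd$. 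The even condition $\si_kX=X$ is unchanged, so $\mathfrak{osp}(\beta)=\fgaa\oplus\fgbb$.

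The sign bookkeeping you single out as the main obstacle in fact disappears.
With $z$ normalized as in (\ref{zzz}), one has $[z,X]=\gamma X$ for every odd $X$. Hence $[[z,X],Y]=\gamma XY-Y\gamma X=\gamma(XY+YX)$. Left multiplication by $\gamma$ is exactly $J$ on the block-diagonal algebra $\fgaa$ ($+1$ on $D_k$, $-1$ on $C_{n-k}$), and $\gamma^2=1$. Therefore $XY+YX=\gamma[[z,X],Y]=J([[z,X],Y])$, which is (\ref{scom}) in all index cases at once.

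Also tidy the closure paragraph: $[z,X]$ lies in $\fgdd$ from the start, not in $\fgbb$.
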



\begin{proof}
Firstly, since $z\in\fgcc$, then  $[[z, \fgbb],\fgbb]\subset  \fgaa$ and hence $[\fgbb, \fgbb]_s\subset \fgaa$.
Let $X, Y\in \fgbb$, then
\[
[X, Y]_s=J([[z, X], Y])=J([[z, Y], X])=[Y, X]_s.
\]
Hence  $[\cdot, \cdot]_s: \fgbb\times \fgbb\longrightarrow \fgaa$ is a symmetric bilinear operator.
Also by direct computation, we get that
\begin{equation}
[\fgbb,\fgbb]_s=\fgaa.
\end{equation}

By the definition of Lie superalgebra, if we want to make $L$ to be a Lie superalgebra, we should verify that
\begin{equation}
\begin{array}{ll}
\mbox{(a)} & \mbox{ The map $S^2(\fgbb)\longrightarrow \fgaa$ induced by the symmetric operator} \\
&\mbox{  $[\cdot,\cdot]_s: \fgbb\times \fgbb\longrightarrow \fgaa$ is a $\gz$-homomorphism;}\\
\mbox{(b)} & \mbox{For $X,Y,Z\in\fgbb$, the super Jacobi identity holds, that is: } \\
&  [X, [Y, Z]_s]_s = [[X,Y]_s, Z]_s-[Y, [X, Z]_s]_s.
\end{array}
\label{super}
\end{equation}

Now we prove (\ref{super})(a). Let $U\in \fgaa$, $X,Y\in \fgbb$. Then
\begin{eqnarray*}
[U, [X,Y]_s]_s&=&[U, XY+YX]_s=U(XY+YX)-(XY+YX)U \\
&=&UXY+UYX-XYU-YXU,
\end{eqnarray*}
and
\begin{eqnarray*}
&&[[U,X]_s, Y]_s+[X,[U,Y]_s]_s \\
&=&[UX-XU, Y]_s+[X, UY-YU]_s \\
&=&(UX-XU)Y+Y(UX-XU)+X(UY-YU)+(UY-YU)X\\
&=&UXY+UYX-XYU-YXU.
\end{eqnarray*}
Then $[U, [X,Y]_s]_s=[[U,X]_s, Y]_s+[X,[U,Y]_s]_s$.
This prove (\ref{super})(a).

Next we prove (\ref{super})(b). Let $X,Y,Z\in \fgbb$. Then
\begin{eqnarray*}
[Z, [X,Y]_s]_s&=&[Z, XY+YX]_s=Z(XY+YX)-(XY+YX)Z  \\
&=&ZXY+ZYX-XYZ-YXZ,
\end{eqnarray*}
and
\begin{eqnarray*}
&& [[Z,X]_s, Y]_s-[X,[Z,Y]_s]_s  \\
&=&[ZX+XZ, Y]_s-[X, ZY+YZ]_s \\
&=&(ZX+XZ)Y-Y(ZX+XZ)-X(ZY+YZ)+(ZY+YZ)X\\
&=&ZXY+ZYX-XYZ-YXZ.
\end{eqnarray*}
Then $[Z, [X,Y]_s]_s=[[Z,X]_s, Y]_s-[X,[Z,Y]_s]_s$.
This prove (\ref{super})(b).

Now we get that $L$ is a Lie superalgebra. By Lemma \ref{irr}, we know that the action of $\fgaa$ on $\fgbb$ is irreducible,
then $L$ is simple.
Also by Lemma \ref{irr}, the $\fgaa$-representation on $\fgbb$ is isomorphic to $\bc^{2k}\otimes \bc^{2n-2k}$.
Then $L=\fgaa \oplus\fgbb$ is a simple Lie superalgebra for which the representation of $\fgaa $ on $\fgbb$ is the same as
the Lie superalgebras $D(m,n)$.
This implies $L\cong D(k,n-k)$ by \cite[Prop.2.1.4]{ka77}.
\end{proof}


Similarly, we have the following results.   
\begin{theorem}
Let $\si_1$ be the order 4 outer automorphism of  $A_{2n-1}$ represented by the marking $(c_1, d)$.
Then there exists a Lie superalgebra structure on $L=\fgaa+\fgbb$ with the Lie super bracket $[\cdot,\cdot]_s$ such that the even part is $\fgaa$ and the odd part is $\fgbb$.
Furthermore,   we have $L\cong C(n)$.
\label{thm2}
\end{theorem}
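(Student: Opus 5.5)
We follow the proof of Theorem \ref{thm1} with $k=1$. By (\ref{sik}) and (\ref{sik2}), with $2n$ in place of $n$, the action of $\si_1$ on the root vectors $E_{s,t}$ is the displayed formula preceding Lemma \ref{irr} at $k=1$. The only difference from the case $1<k<n$ is that the "$D_k$" block degenerates: for $k=1$ it is spanned by $E_{1,1}-E_{2n,2n}$ alone. Hence the bases in (a)--(c) specialize as follows. The basis of $\fgaa$ consists of the $C_{n-1}$ elements $\{E_{s,t}+(-1)^{t-s-1}E_{2n+1-t,2n+1-s}\}_{2\leq s,t\leq 2n-1,\ s+t\leq 2n+1}$ together with the one-dimensional center spanned by $H_1=E_{1,1}-E_{2n,2n}$. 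This agrees with Table \ref{table1}(a), where $\fgia\cong C_{n-1}$ and $\fgib\cong\bc$. The basis of $\fgbb$ is
\[
\{E_{1,t}+(-1)^{t}E_{2n+1-t,2n},\ E_{t,1}+(-1)^{t-1}E_{2n,2n+1-t}\}_{2\leq t\leq 2n-1},
\]
so $\dim\fgbb=4(n-1)$. We take $z=\mathrm{diag}(1,\lambda,\dots,\lambda,1)$ with $\lambda=\frac{1}{n-1}$, suitably normalized so that (\ref{zzz}) holds with $k=1$.

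Next, we determine the $\fgaa$-module structure of $\fgbb$, using the weight computation of Lemma \ref{irr} with $k=1$. The weights are $\xi_1-\eta_t$ on the first family and $-\xi_1+\eta_s$ on the second. Under $H_1$ these two families have eigenvalues $+1$ and $-1$, and each is the standard module $\bc^{2n-2}$ of $C_{n-1}$. Therefore $\fgbb\cong \bc^{2n-2}\otimes\bc_{+1}\ \oplus\ \bc^{2n-2}\otimes\bc_{-1}$, which is two mutually dual irreducible summands, exactly as for the odd part of $C(n)=\mathfrak{osp}(2,2n-2)$.

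We then check (\ref{scom}): $J([[z,X],Y])=XY+YX$ for $X,Y\in\fgbb$, where $J=\id$ on $C_{n-1}$ and $J=-\id$ on the center $\bc H_1$ (the sign choice is immaterial). This is the main obstacle, since the sign now has to be matched on the center rather than on a simple block. We verify it on pairs of basis vectors, as in the displayed computation before Theorem \ref{thm1}, setting $s=s'=1$. The bracket $[[z,X],Y]$ and the anticommutator $XY+YX$ have the same $\delta_{t,t'}$-term, which is proportional to $E_{1,1}-E_{2n,2n}$ and lies in the center. They also have the same $\delta_{s'+t,2n+1}$-term; for $s'=1$ this term occurs only when $t=2n$, so it vanishes. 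They differ in sign on the $\delta_{s,s'}$-term, which lies in $C_{n-1}$. With this sign choice for $J$ the two expressions agree. Brackets of two vectors from the same family vanish on both sides, by the weight analysis. Once (\ref{scom}) holds, the verifications of (\ref{super})(a) and (b) are word-for-word the matrix identities in the proof of Theorem \ref{thm1}, so $L$ is a Lie superalgebra.

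Finally, we identify $L$. A direct computation gives $[\fgbb,\fgbb]_s=\fgaa$: the center is hit by the $\delta_{t,t'}$-terms, and $C_{n-1}$ by the $\delta_{s,s'}$-terms. Thus $L$ has no proper graded ideals, and it is simple of type 1. Its even part is $C_{n-1}\oplus\bc$, and its odd part is the sum of the two standard modules with opposite central characters. By \cite[Prop.2.1.4]{ka77} (equivalently, Kac's classification in \cite[\S 2.5]{ka77}), this data forces $L\cong C(n)$.
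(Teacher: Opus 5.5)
Your proposal follows the paper's route. The paper proves Theorem~\ref{thm2} only by ``similarly'', i.e.\ by rerunning the proof of Theorem~\ref{thm1} at $k=1$. Your adaptations are the right ones: the $D_k$ block collapses to $\bc H_1$, and the odd part splits as two mutually dual standard modules $V_\pm$ with $[V_\pm,V_\pm]_s=0$ and $[V_+,V_-]_s=\fgaa$. Simplicity should be drawn from these facts together with faithfulness, not from $[\fgbb,\fgbb]_s=\fgaa$ alone.

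There is a sign slip in the key step. By your own comparison, the $\delta_{t,t'}$-terms are equal and lie in $\bc H_1$, while the $C_{n-1}$-terms are opposite. So $J([[z,X],Y])=XY+YX$ requires $J=\id$ on $\bc H_1$ and $J=-\id$ on $C_{n-1}$, the reverse of what you wrote. For example, for $n=2$ take $X=E_{1,2}+E_{3,4}$ and $Y=E_{2,1}-E_{4,3}$. Then $[[z,X],Y]=H_1-(E_{2,2}-E_{3,3})$, whereas $XY+YX=H_1+(E_{2,2}-E_{3,3})$. With your $J$ you get $J([[z,X],Y])=-(XY+YX)$, so the sentence ``with this sign choice for $J$ the two expressions agree'' is false as stated. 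It is harmless, because each identity in (\ref{super}) is linear in the odd--odd bracket; equivalently, rescale $\fgbb$ by $i$.

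Also, $z$ must be traceless, so $z\propto\mathrm{diag}(1,-\frac{1}{n-1},\dots,-\frac{1}{n-1},1)$. Your value $\lambda=\frac{1}{n-1}$ is a sign typo inherited from the paper's $\lambda=\frac{k}{n-k}$. It does not give an element of $\fs\fl(2n,\bc)$, and for $n=2$ it is the identity matrix, whose $\ad z=0$ cannot be normalized to satisfy (\ref{zzz}).
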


\begin{theorem}
Let $\si_k$ $(1\leq k\leq n)$ be the order 4 outer automorphism of  $A_{2n}$ represented by the marking $(c_k, d)$ defined in (\ref{mark}).
There exists a Lie superalgebra structure on $L=\fgaa+\fgbb$ with the Lie super bracket $[\cdot,\cdot]_s$ such that the even part is $\fgaa$ and the odd part is $\fgbb$.
Furthermore, we have $L\cong B(n-k,k)$.
\label{thm2.5}
\end{theorem}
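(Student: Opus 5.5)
The statement to prove is Theorem \ref{thm2.5}. I will follow the template of Theorem \ref{thm1}: first make the eigenspaces explicit, then show that $[\cdot,\cdot]_s$ becomes the matrix anticommutator, and finally identify $L$ by its even part and odd module.

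The first step is to specialize (\ref{sik}) and (\ref{sik2}) to $n\mapsto 2n+1$. This gives explicit bases of $\fgaa$ and $\fgbb$ built from the symmetrized elements $E_{s,t}\pm E_{2n+2-t,2n+2-s}$, with the signs dictated by the block position of $(s,t)$.
\begin{itemize}
\item The blocks are $\{1,\ldots,k\}$, $\{k+1,\ldots,2n+1-k\}$ and $\{2n+2-k,\ldots,2n+1\}$.
\item $\fgaa$ is spanned by the elements whose indices lie in the same block, or in the two outer blocks together.
\item $\fgbb$ is spanned by the elements coupling an outer block with the middle block, with the factor $i$ in (\ref{sik}).
\end{itemize}
Next I choose the Cartan subalgebra $\{E_{s,s}-E_{2n+2-s,2n+2-s}\}$ of $\fgaa$ and compute root tables as in Lemma \ref{irr}. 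This should show three things: the outer-block elements form $C_k$, the middle-block elements (odd size $2n+1-2k$) form $B_{n-k}$, and $\fgbb\cong\bc^{2k}\otimes\bc^{2n-2k+1}$ is irreducible. This matches Table \ref{table1}(c), and Figure \ref{equiauto} identifies $(c_k,d)$ with that automorphism. I also need the central element $z\in\fgcc$. I take $z=\mathrm{diag}(1,\ldots,1,\lambda,\ldots,\lambda,1,\ldots,1)$ with $\lambda$ chosen so that the trace vanishes, and normalize it so that $[z,E_{s,t}]=\pm E_{s,t}$ on the two off-diagonal block types, as in (\ref{zzz}).

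The second step, and the main computation, is the analogue of (\ref{scom}). With $J=\id$ on one simple ideal and $-\id$ on the other (the sign choice is immaterial), I must show that $J([[z,X],Y])=XY+YX$ for all $X,Y\in\fgbb$. Conjugating by $z$ flips the relative sign between the two terms of each basis vector. Hence $[[z,X],Y]$ and $XY+YX$ differ exactly in the sign of the component landing in one diagonal block (one simple ideal). This is the block in which $J$ acts by $-\id$, so applying $J$ restores equality.

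I expect this sign bookkeeping to be the main obstacle. Compared with $A_{2n-1}$, the sign pattern in (\ref{sik}) shifts because $2n+1$ is odd and there is a middle fixed index $n+1$. Also the diagonal contributions must be checked to land in $\fgaa$. I would verify this on the generic pair of basis vectors, exactly as done before Theorem \ref{thm1}, including the cases $\delta_{t,t'}$, $\delta_{s,s'}$ and $\delta_{s'+t,2n+2}$.

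Once the bracket is identified with the anticommutator, the final step repeats the proof of Theorem \ref{thm1}. Conditions (\ref{super})(a) and (b) reduce to associativity of matrix multiplication. Since $z\in\fgcc$, the bracket satisfies $[\fgbb,\fgbb]_s\subset\fgaa$, and one checks directly that it is onto. Together with irreducibility of $\fgbb$, this makes $L$ simple. Its even part is $C_k\oplus B_{n-k}$ and its odd part is $\bc^{2k}\otimes\bc^{2(n-k)+1}$, which are the data of $\mathfrak{osp}(2(n-k)+1,2k)$. By \cite[Prop.2.1.4]{ka77}, $L\cong B(n-k,k)$. The boundary case $k=n$ gives even part $C_n$ and odd part $\bc^{2n}$, which is $B(0,n)$; there $J=\id$, consistent with the remark about Table \ref{table1}(d).
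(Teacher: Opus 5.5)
Your proposal is correct and takes essentially the same route as the paper, which proves this theorem only by saying it goes ``similarly'' to Theorem~\ref{thm1}: explicit bases from (\ref{sik}), identification of $J([[z,X],Y])$ with the anticommutator $XY+YX$ (with $J=-\id$ on the middle-block ideal $B_{n-k}$), the super Jacobi identity from associativity of matrix multiplication, and identification of $L$ via \cite[Prop.2.1.4]{ka77}. Your sign bookkeeping can be done once for all $X,Y\in\fgbb$ rather than basis pair by basis pair: writing $X,Y$ in off-diagonal block form with respect to outer indices versus middle indices, $[[z,X],Y]$ and $XY+YX$ visibly agree on the outer diagonal block and differ only by sign on the middle diagonal block.
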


\begin{theorem}
Let $\fg=D_n$ and $\si$ in Table \ref{table1}(e).
Then the map $[\cdot,\cdot]_s$ defines a Lie superalgebra structure on $L=\fgaa+\fgbb$.
Furthermore  $L\cong A(0, n-2)$.
\label{thm3}
\end{theorem}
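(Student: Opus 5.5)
We follow the strategy of Theorem \ref{thm1}: first realize $\sigma$ concretely, then identify $\fgaa$, $\fgbb$ and the center $z$, and finally show that $[\cdot,\cdot]_s$ is given by an explicit (anti)commutator formula, from which the super Jacobi identity follows formally. Take $\fg=D_n=\mathfrak{so}(2n,\bc)$ and let $\sigma$ be the automorphism of Table \ref{table1}(e). The Vogan diagram of $\si^2$ in Table \ref{table3} shows that $\fg^{\si^2}=\fgaa+\fgcc\cong D_{n-1}\oplus\bc$. So we work in the standard picture $\bc^{2n}=\bc^{2}\oplus\bc^{2n-2}$, where the center $\bc\cong\mathfrak{so}(2)$ is generated by $z$. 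Then $\fg^{-\si^2}=\fgbb+\fgdd\cong\bc^2\otimes\bc^{2n-2}$, and $\ad z$ acts on it by $\pm1$. On $\fgaa+\fgcc=\mathfrak{so}(2n-2)\oplus\bc z$, the automorphism $\sigma$ restricts to an involution with fixed algebra $A_{n-2}\oplus\bc=\mathfrak{gl}(n-1)$. Its $(-1)$-eigenspace $\fgcc$ is $\Lambda^2\bc^{n-1}\oplus\Lambda^2(\bc^{n-1})^*$ together with $\bc z$. Concretely, $\sigma$ should be $\mathrm{Ad}$ of a complex structure $I$ on $\bc^{2n-2}$ (which anticommutes with the $\mathfrak{so}(2)$-generator on $\bc^2$ in a suitable sense), composed with a reflection on $\bc^2$ that sends $z\mapsto -z$. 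The outer part comes from $\det=-1$ on $\bc^2$.

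Next we decompose $\bc^2\otimes\bc^{2n-2}$ under $\mathfrak{gl}(n-1)$. Writing $\bc^2=\bc e_+\oplus\bc e_-$ for the $z$-eigenlines and $\bc^{2n-2}=W\oplus W^*$ for the $\pm i$ eigenspaces of $I$, we check that $\fgbb$ is the span of $e_+\otimes W$ and $e_-\otimes W^*$ (up to the correct pairing of signs). Hence $\fgbb\cong W\oplus W^*$ as a $\mathfrak{gl}(n-1)$-module, with $\dim\fgbb=2(n-1)$. The center $\bc$ of $\fgaa$ acts with opposite weights on the two summands. This is exactly the odd part of $A(0,n-2)=\mathfrak{sl}(1|n-1)$, whose even part is $\mathfrak{gl}(n-1)$. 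As in Lemma \ref{irr}, we confirm these identifications by listing a Cartan subalgebra of $\fgaa$ and reading off the weights of explicit root vectors.

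The key step is to compute $J([[z,X],Y])$ for $X,Y\in\fgbb$ and compare it with the $\mathfrak{sl}(1|n-1)$ bracket. For $X=e_+\otimes w$ and $Y=e_-\otimes\xi$, the bracket $[[z,X],Y]=\pm[X,Y]$ projects to $\mathfrak{gl}(n-1)$. We expect the result to be $w\otimes\xi$ in the $\mathfrak{sl}(n-1)$ part plus $\xi(w)$ times the element of $\fso(2)$ or of the center. We then identify $L$ with $\mathfrak{sl}(1|n-1)$ via $\fgaa\ni A+cz_0\mapsto \mathrm{diag}(\ast,A)$ and $\fgbb\ni(w,\xi)\mapsto$ off-diagonal blocks. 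The super bracket in $\mathfrak{sl}(1|n-1)$ is $XY+YX$, and it sends $(w,\xi)$ to $w\xi^{T}+\xi(w)E_{11}$. Its traceless part matches the $A_{n-2}$ component with coefficient $1$, while the central component is scaled by the supertrace condition. This accounts for the factor $-(n-2)$ in $J$: the center of $\mathfrak{gl}(n-1)\subset\mathfrak{sl}(1|n-1)$ is $\mathrm{diag}(n-1,1,\dots,1)$, whereas in $\fso(2n)$ the natural central element has different normalization relative to the Killing form. The main obstacle is checking that exactly this normalization arises. We will test it first on a single pair $w=e_1$, $\xi=e_1^*$ and track constants carefully.

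Once $J([[z,X],Y])$ is shown to agree with the anticommutator under this identification, and the even–odd brackets agree with commutators (automatic, since both are the natural $\mathfrak{gl}(n-1)$-actions on $W\oplus W^*$), the linear isomorphism $L\to\mathfrak{sl}(1|n-1)$ intertwines the brackets. Therefore conditions (\ref{super})(a) and (b) hold by transport of structure, exactly as in the proof of Theorem \ref{thm1}, and $L\cong A(0,n-2)$. As an alternative check, we can invoke \cite[Prop.2.1.4]{ka77}: $[\fgbb,\fgbb]_s=\fgaa$ and the $\fgaa$-module $\fgbb$ coincides with that of $A(0,n-2)$.
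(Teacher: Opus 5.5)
Your transport-of-structure strategy is a sensible way to fill in what the paper dismisses with ``similarly'': build an explicit $\mathfrak{gl}(n-1)$-equivariant isomorphism $L\to\mathfrak{sl}(1|n-1)$ and inherit (\ref{super})(a),(b). The device of Theorem \ref{thm1} cannot be copied literally here, since for $X,Y\in\mathfrak{so}(2n)$ the matrix $XY+YX$ is symmetric rather than skew. However, there is a genuine gap: the concrete model on which your key step rests is wrong.

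\textbf{The model of $\fgbb$.} You take $\fgbb$ to be spanned by $e_+\otimes W$ and $e_-\otimes W^*$, with $e_\pm$ the eigenlines of $z$, and then use $[[z,X],Y]=\pm[X,Y]$. This cannot be right. Since $z\in\fgcc$, $\ad z$ maps $\fgbb$ into $\fgdd$, so $\fgbb$ contains no eigenvector of $\ad z$ at all. Equivalently, the reflection $r$ in $\sigma=\mathrm{Ad}(r\oplus I)$ anticommutes with $z$, hence interchanges $e_+$ and $e_-$, so your subspace is not even $\sigma$-stable.

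Write $u\wedge v$ for the element $x\mapsto (v,x)u-(u,x)v$ of $\mathfrak{so}(2n)$. Let $e_1,e_2$ be the orthonormal eigenvectors of $r$, with $re_1=e_1$, $re_2=-e_2$, and take $I=i$ on $W$. The correct description is
\[ \fgbb=e_1\wedge W\oplus e_2\wedge W^*,\qquad \fgdd=e_1\wedge W^*\oplus e_2\wedge W, \]
with $z$ normalized by $ze_1=e_2$, $ze_2=-e_1$. For $X=e_1\wedge w$ and $Y=e_2\wedge\xi$:
\begin{itemize}
\item $[X,Y]=-(w,\xi)\,e_1\wedge e_2\in\bc z\subset\fgcc$, whose $\fgaa$-component is zero;
\item $[[z,X],Y]=[e_2\wedge w,e_2\wedge\xi]=-\,w\wedge\xi$, which already lies in $\fgaa=\mathfrak{gl}(W)$; it is the element $-w\otimes\xi$.
\end{itemize}
So the identity you planned to use is false. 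Moreover, no projection enters, and none is allowed, since $J$ is only defined on $\fgaa$.

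\textbf{The factor $-(n-2)$.} This is the crux, and you leave it unverified with an incorrect heuristic. $\mathrm{Hom}_{\fgaa}(S^2\fgbb,\fgaa)$ is two-dimensional, coming from $W\otimes W^*\to\mathfrak{sl}(W)\oplus\bc$. A direct check of the odd--odd--odd Jacobi identity shows that only one ratio of the two constants works. Hence neither the module structure nor Kac's Prop.~2.1.4 can replace this computation.

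The Killing form plays no role; the factor comes from the identification itself.
\begin{itemize}
\item An even element $\mathrm{diag}(a,A')$ of $\mathfrak{sl}(1|n-1)$ acts on the odd column block by $A'-a$.
\item So the even element matching $A\in\mathfrak{gl}(W)$, acting naturally on $W$, is $\mathrm{diag}(a,A+a)$ with $a=-\mathrm{tr}A/(n-2)$.
\item Under this matching, the anticommutator $\mathrm{diag}(\eta(v),v\eta^T)$ of a column $v$ and a row $\eta$ corresponds to $v\eta^T-\eta(v)\cdot 1$.
\item This has the same traceless part as $v\otimes\eta$, but trace $-(n-2)\eta(v)$ instead of $\eta(v)$.
\end{itemize}
Comparing with $[[z,X],Y]=-w\otimes\xi$ forces $J$ to be $1$ on $A_{n-2}$ and $-(n-2)$ on the center. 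This holds up to an overall scalar, which is absorbed by rescaling the identification of $\fgbb$. With $\fgbb$ corrected and this computation carried out, your argument goes through.
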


\begin{remark}
Generally, if we define $[X, Y]_s=[[z, X], Y]$ for $X, Y\in \fgbb$, this does not give a Lie superalgebra structure on $L=\fgaa+\fgbb$.
For example, let $\fg=A_{2n-1}$ and consider $\sigma_k (1<k<n)$.
Choose $a=E_{1, k+1}+(-1)^{k-1}E_{2n-k, 2n}, b=E_{k+1, 1}+(-1)^k E_{2n,2n-k}, c=E_{k+2, 1}+(-1)^{k-1}E_{2n, 2n-k-1}\in \fgbb$.
Then we have
\begin{eqnarray*}
&& [a, [b, c]_s]-[[a, b]_s, c]+[b, [a,c]_s] \\
&=& [E_{1, k+1}+(-1)^{k-1}E_{2n-k, 2n}, [-E_{k+1, 1}+(-1)^k E_{2n,2n-k}, E_{k+2, 1}+(-1)^{k-1}E_{2n, 2n-k-1}]] \\
&&- [[E_{1, k+1}-(-1)^{k-1}E_{2n-k, 2n},  E_{k+1, 1}+(-1)^k E_{2n,2n-k}] , E_{k+2, 1}+(-1)^{k-1}E_{2n, 2n-k-1}] \\
&& +[ E_{k+1, 1}+(-1)^k E_{2n,2n-k} ,[E_{1, k+1}-(-1)^{k-1}E_{2n-k, 2n}, E_{k+2, 1}+(-1)^{k-1}E_{2n, 2n-k-1}]] \\
&=& -[E_{1,1}-E_{k+1,k+1}+E_{2n-k, 2n-k}-E_{2n, 2n}, E_{k+2, 1}+(-1)^{k-1}E_{2n, 2n-k-1}]  \\
&& +[ E_{k+1, 1}+(-1)^k E_{2n,2n-k} , -E_{k+2, k+1}-E_{2n-k, 2n-k-1}] \\
&=& 2 (E_{k+2, 1}-(-1)^kE_{2n, 2n-k-1})\neq 0.
\end{eqnarray*}

But for the case $\fg=A_{2n}$ and the automorphism $\si_n$, we have $\fgaa=C_n$, which is simple.
Then $J=\id$ and  $[X, Y]_s=[[z, X], Y]$ for $X, Y\in \fgbb$.
This gives a Lie superalgebra structure of type $B(0, n)$ on $L=\fgaa+\fgbb$.
\label{rem}
\end{remark}

\subsection{Exceptional cases}

In this part, we briefly introduce the proof of Theorem \ref{main} for two exceptional cases, which is  in Table \ref{table1}(f) and Table \ref{table1}(b) with $\fg = A_5$.

\underline{Case 1: Table \ref{table1}(f)} 

Let $\fg$ be the simple Lie algebra of type $E_6$. 
Let $\{\alpha_1, \dots, \alpha_6\}$ be the set of simple roots of $E_6$ in the Bourbaki labeling, where $\alpha_2$  is the simple root associated with the branch vertex of the Dynkin diagram.
Let $X_{\al_1}, X_{\al_2}, \ldots, X_{\al_6}$ be the root vectors attached to the simple roots $\al_1, \al_2, \ldots, \al_6$ respectively.
Let $\si$ be an order 4 automorphism of $E_6$ represented by
\[ \si X_{\al_3}=X_{\al_5}, \si X_{\al_5}=X_{\al_3}, \si X_{\al_2}=X_{\al_2}, \si X_{\al_4}=X_{\al_4},
  \si X_{\al_1}=i X_{\al_6}, \si X_{\al_6}=iX_{\al_1}. \]

The root system data of $E_6$ is well-known (i.e.  \cite{Bourbaki}).  
A straightforward but lengthy computation shows that  $\fgaa$ is isomorphic to $B_3 \oplus A_1$. Here the $B_3$ factor is generated by the $\sigma$-invariant combinations of roots involving $\alpha_2, \alpha_3+\alpha_5, \alpha_4$, and the $A_1$ factor corresponds to the highest root orbit.
The subspace $\fgbb$ is 16-dimensional and isomorphic to $\mathrm{spin}_7 \otimes \bc^2$ as a $\fgaa$-representation. 
Here $\mathrm{spin}_7$ is the 8-dimensional spinor representation of $\mathfrak{so}_7$ and $\bc^2$ is the standard representation of the $A_1$ factor.
%

We can write
the subalgebra $A_1$ of $\fgaa$ as $A_1=\mathrm{Span}\{ H_0, X_0, Y_0\}$.
Write $V=\mathrm{Span}\{v_1, v_2, \ldots, v_8\}$. 
By  multiplying  a nonzero coefficient,  we can choose $z \in \fgcc$  and assume that
\[ [[z, v_k], v_j]=\left\{\begin{array}{ll}
0,& \mbox{ if $k+j\neq 9$}, \\
X_0, & \mbox{ if $j+k=9$}.\end{array}\right.
\]
Let $W=\mathrm{Span}\{w_1, w_2, \ldots, w_8\}$ with $w_k=[Y_0, v_k]~ (1\leq k\leq 8)$.
Then $[[z, w_k], w_j]=0$ if $k+j\neq 9$ and $\fgbb=V\oplus W$.
Furthermore,  we have
\[\left\{ \begin{array}{ll}
{ [[z, v_k], v_j]=0, ~[[z, w_k], w_j]=0, ~[[z, v_k], w_j]\in B_3}  &\mbox{ if $k+j\neq 9$}, \\
{ [[z, v_k], v_j]=X_0}, ~ [[z, w_k], w_j]=-Y_0, &  \\
{ [[z, v_k], w_j]= X_{B_3}-\frac{1}{2}H_0\mbox{ with }X_{B_3}\in B_3}, &\mbox{ if $k+j=9$}.
 \end{array}\right.
\] 

Then the bilinear map defined in Theorem \ref{main} on $\fgbb$ are as follows:
\begin{equation}
 \label{sbra}
\left\{ \begin{array}{ll}
{ [v_k, v_j]_s=0, ~[ w_k , w_j]_s=0, ~[ v_k , w_j]_s=[[z, v_k], w_j] }  &\mbox{ if $k+j\neq 9$}, \\
{ [v_k, v_j]_s=-3X_0}, ~ [ w_k, w_j]_s=3 Y_0, ~ & \\
{ [  v_k, w_j]_s=[[z, v_k], w_j]+2H_0}, &\mbox{ if $k+j=9$}.
 \end{array}\right.
\end{equation}

The super Jacobi identity follows from a case-by-case verification using the commutation relations of $E_6$. 
Finally, we get the following result. 
\begin{theorem}
Let $\fg=E_6$ and $\si$ in Table \ref{table1}(f).
Then the map $[\cdot,\cdot]_s$ defines a Lie superalgebra structure on $L=\fgaa+\fgbb$.
Furthermore  $L\cong F(4)$.
\label{thm4}
\end{theorem}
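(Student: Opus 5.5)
The plan is to verify the two conditions (\ref{super})(a) and (b) for the bracket (\ref{sbra}), and then identify $L$ through the $\fgaa$-module structure of $\fgbb$ together with \cite[Prop.2.1.4]{ka77}, just as in the proof of Theorem \ref{thm1}. Since $z\in\fgcc$ and $\si$ multiplies $\fgbb$ by $i$, the map $(x,y)\mapsto[[z,x],y]$ sends $\fgbb\times\fgbb$ into $\fgaa$. It is symmetric because
\[
[[z,x],y]-[[z,y],x]=[z,[x,y]],
\]
and $[x,y]\in\fgcc$ commutes with the central element $z$. Because $J$ acts by a scalar on each simple ideal $B_3$ and $A_1$, it commutes with $\ad\fgaa$. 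The map $x\mapsto[z,x]$ is $\fgaa$-equivariant, since $z$ is central in $\fgaa+\fgcc$. Together these give condition (a): $[\cdot,\cdot]_s:S^2(\fgbb)\to\fgaa$ is a $\fgaa$-homomorphism.

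For the super Jacobi identity (b), I would not check it triple by triple. Instead I would use Kostant's criterion quoted in the introduction, or equivalently its classical reformulation. Given (a), the identity is equivalent to the vanishing of the cubic map $x\mapsto[[x,x]_s,x]_s$ on $\fgbb$, which is a $\fgaa$-equivariant map $S^3(\fgbb)\to\fgbb$.

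First, I would decompose the map. Write $[\cdot,\cdot]_s=\mu_{B_3}+\lambda\,\mu_{A_1}$, where $\mu_{B_3}$ and $\mu_{A_1}$ are the components of $(x,y)\mapsto[[z,x],y]$ in the two ideals, with $\lambda=-3$ in the present normalisation (the value $J=-3\cdot\id$ on $A_1$). Each component $\mu_j$ is the moment map of the ideal for the symplectic form $\langle x,y\rangle=\kapg([z,x],y)$. So $\mu_j(x,y)=\sum_a \langle x,X_a\cdot y\rangle X_a^*$ up to a normalising constant, where $\{X_a\}$ is a basis of the ideal and $\{X_a^*\}$ the dual basis for the restricted Killing form.

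Second, I would show that $[[x,x]_s,x]_s=\mu_{B_3}(x,x)\cdot x+\lambda\,\mu_{A_1}(x,x)\cdot x$ vanishes identically if and only if the linear combination $c_{B_3}\Omega_{B_3}+\lambda c_{A_1}\Omega_{A_1}$ of Casimir operators acts on $\fgbb\cong\mathrm{spin}_7\otimes\bc^2$ in a way that makes the quartic invariant vanish. This is the type-2 version of Kostant's condition. Concretely, I would expand $\langle x,\mu_j(x,x)\cdot x\rangle$ as a quartic invariant of $\fgbb$. Since $\mathrm{spin}_7\otimes\bc^2$ has a one-dimensional space of quartic invariants coming from each factor (as $(B_3\oplus A_1)$-module, $S^4$ of it contains a small number of trivial summands), there is a single linear relation between $q_{B_3}(x)=\langle x,\mu_{B_3}(x,x)x\rangle$ and $q_{A_1}(x)=\langle x,\mu_{A_1}(x,x)x\rangle$. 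Its ratio is fixed by the Dynkin indices, since $l_{\mathrm{spin}_7}=1/5$ and $\dim$ data from Table \ref{Index} determine the trace identities.

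Third, I would evaluate both quartics on a single convenient vector, such as $x=v_1+v_8+w_1$, using the explicit data for $[[z,v_k],v_j]$, $[[z,w_k],w_j]$ and $[[z,v_k],w_j]$ listed before (\ref{sbra}). This fixes the ratio and checks that $\lambda=-3$ is exactly the value making the cubic vanish. Polarisation then gives the full Jacobi identity.

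The main obstacle is this quartic-invariant step. One has to justify that the space of $\fgaa$-invariant quartics relevant here is small enough that one evaluation suffices, or else perform a direct root-by-root check using $E_6$ structure constants, which is the fallback. Once Jacobi holds, the identification is quick. Since $\fgbb\cong\mathrm{spin}_7\otimes\bc^2$ is irreducible and faithful, and $[\fgbb,\fgbb]_s=\fgaa$ (both ideals are hit because both components are nonzero), $L$ is simple. Because $\Lz=B_3\oplus A_1$ acts on $\Lo$ as in $F(4)$, Kac's classification \cite[Prop.2.1.4]{ka77} gives $L\cong F(4)$.
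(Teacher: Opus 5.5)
Your argument is sound in outline, but it follows a different route from the paper.

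\textbf{How the paper proceeds.} The paper does nothing structural here. It fixes the normalisations of $[[z,v_k],v_j]$, $[[z,w_k],w_j]$ and $[[z,v_k],w_j]$, asserts that the super Jacobi identity ``follows from a case-by-case verification using the commutation relations of $E_6$'', and identifies $L$ from the $\fgaa$-module $\fgbb\cong\mathrm{spin}_7\otimes\bc^2$ as in Theorem~\ref{thm1}.

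\textbf{How your route differs.} Your derivation of condition (a) is correct: symmetry from $[z,[x,y]]=0$, equivariance of $x\mapsto[z,x]$, and $J$ scalar on each ideal. This is more than the paper writes down. For (b) you observe that $[\cdot,\cdot]_s$ is the moment map for the form $B_J(a,b)=\kapg(J^{-1}a,b)$ on $\fgaa$ together with $\langle\cdot,\cdot\rangle$ on $\fgbb$. Jacobi is then equivalent to the vanishing of the invariant quartic $q_{B_3}+\lambda q_{A_1}$, whose symplectic gradient is a multiple of $[[x,x]_s,x]_s$. This explains why the constant $-3$ is forced and reduces the verification to a single scalar. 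The paper's route is elementary but long, and it is not actually displayed.

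\textbf{The step that needs a proof.} Everything rests on $q_{B_3}$ and $q_{A_1}$ being proportional. Your reasons (``one-dimensional \dots\ coming from each factor'', ``a small number of trivial summands'') do not establish this. If $(S^4\fgbb)^{\fgaa}$ were two-dimensional, one evaluation would prove nothing. It is in fact one-dimensional, and you can show it as follows.
\begin{itemize}
\item By the Cauchy decomposition $S^4(U\otimes\bc^2)=\bigoplus_{|\pi|=4}S_\pi U\otimes S_\pi\bc^2$, only $\pi=(2,2)$ yields $\mathfrak{sl}_2$-invariants.
\item As $\mathfrak{so}_8$-modules, $S_{(2,2)}\bc^8=V_{2\omega_2}\oplus V_{2\omega_1}\oplus\bc$.
\item $V_{2\omega_2}$ is triality-stable, so ordinary $D_4\to B_3$ branching applies and it has no $B_3$-invariant.
\item $V_{2\omega_1}=S^2_0(\bc^8)$ restricts to $\Lambda^3\bc^7$ along the spin embedding, so it has no $B_3$-invariant either.
\end{itemize}

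\textbf{The evaluation step.} At $x=v_1+v_8+w_1$ the listed data give $\mu_{A_1}(x,x)=2X_0-H_0$, so $q_{A_1}(x)\neq0$. However, $q_{B_3}(x)$ involves $X_{B_3}$ and $[[z,v_1],w_1]$, which the paper never records, so some $E_6$ computation would remain.

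The ``trace identities'' you allude to remove that computation entirely:
\begin{itemize}
\item In $F(4)$ the odd bracket is the moment map of its Killing form. By $l_{\mathrm{spin}_7}=\frac15$ and $l_{\bc^2}=\frac14$, that form restricts to $\frac35\kappa_{B_3}\oplus(-\kappa_{A_1})$ on the even part.
\item From $\fgdd\cong\fgbb$ and $\fgcc\cong\bc\oplus\bc^7\otimes\bc^3$ one reads off $\kapg|_{B_3}=\frac{12}{5}\kappa_{B_3}$ and $\kapg|_{A_1}=12\,\kappa_{A_1}$.
\item With $J=-3$ on $A_1$, these give $B_J=\frac{12}{5}\kappa_{B_3}\oplus(-4\kappa_{A_1})$, which is exactly $4$ times the $F(4)$ form.
\end{itemize}
Hence the $F(4)$ bracket, transported along a $\fgaa$-isomorphism $\fgbb\cong F(4)_{\bar1}$, is a nonzero multiple of $[\cdot,\cdot]_s$. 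This yields the Jacobi identity and $L\cong F(4)$ at the same time, with no quartic invariants and no $E_6$ structure constants.
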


\underline{Case 2: Table \ref{table1}(b) with $\fg = A_5$}

In this part, we first recall the construction of the exceptional Lie superalgebra $D(2,1; \alpha), ~ \alpha\neq 0, -1$,
and then prove Theorem \ref{main} for $(A_5, \si)$ in Table \ref{table1}(b).

The exceptional Lie superalgebras  $D(2,1; \alpha)$ with $\al\neq 0, -1$ form a one-parameter family
of superalgebras of rank 3 and dimension 17.
The even part is isomorphic to $\fs\fl(2, \bc)^3$ and the odd part is the spinor representation of the even part.
In this part, we recall the construction of  $D(2,1; \alpha)$ with $\al\neq 0, -1$.
More details can be found in \cite[pp.17--18]{sch}.

Choose three 2-dimensional vector space $V_i$, $i=1,2,3$.
Let $\phi_i$ be a non-degenerate skew-symmetric bilinear form on $V_i$
(any two such forms are proportional). It is well-known that $\fsp(\phi_i)\cong \fsp(V_i)$.
We define
\[
\Lz=\fsp(\phi_1)+\fsp(\phi_2)+\fsp(\phi_3),  \Lo=V_1\otimes V_2 \otimes V_3.
\]

Up to a factor, there exists just one $\fsp(\phi_i)$-invariant bilinear mapping:
\[
P_i: V_i\times V_i\longrightarrow \fsp(\phi_i).
\]
We choose the normalization such that
\begin{equation}
P_i(x_i,y_i)z_i=\phi_i(y_i,z_i)x_i-\phi_i(z_i,x_i)y_i
\label{pp1}
\end{equation}
for all $x_i,y_i,z_i\in V_i$.
Then the most general ansatz for a $\Lz$-invariant bilinear mapping
\[
P: \Lo\times \Lo\longrightarrow \Lz
\]
is given by
\begin{eqnarray}
\label{pp}
&& P(x_1\otimes x_2 \otimes x_3, y_1\otimes y_2 \otimes y_3) \nonumber  \\
&=& \al_1 \phi_2(x_2,y_2) \phi_3 (x_3,y_3) P_1(x_1,y_1) +
\al_2 \phi_1(x_1,y_1) \phi_3 (x_3,y_3) P_2(x_2,y_2) \\
&& +\al_3 \phi_1 (x_1,y_1) \phi_2(x_2,y_2) P_3(x_3,y_3), \nonumber
\end{eqnarray}
for all $x_i,y_i \in V_i, i=1,2,3$.
Here $\al_1,\al_2,\al_3$ are some arbitrary complex numbers.
The mapping $P$ is symmetric and it is easy to check that
$L$ is a Lie superalgebra if and only if
\[
\al_1+\al_2+\al_3=0.
\]

The obtained Lie superalgebra
$\Gamma(\al_1,\al_2,\al_3)$ depending on three nonzero parameters $\al_i, i=1,2,3$. For
$\si \in S_3$ and any nonzero $\lambda\in\bc$, we have
\[
\Gamma(\al_1,\al_2,\al_3)=\Gamma(\al_{\si (1)}, \al_{\si (2)}, \al_{\si (3)}) =\Gamma(\lambda\al_1,\lambda\al_2,\lambda\al_3). \]
 Thus, we have a one-parameter family of Lie superalgebras
$D(2,1;\alpha) := \Gamma(\alpha,1,-1-\alpha )$ that are simple for $\al\neq 0,-1$.

We have $\Gamma(\al,1,-1-\al)\cong\Gamma(1,\al,-1-\al)\cong \Gamma(1,\alpha^{-1},-\al^{-1}-1)$,
and also
$\Gamma(\al,1,-1-\al)\cong\Gamma(-1-\al, 1,\al)$, which imply
\[
D(2,1;\al)\cong D(2,1;\al^{-1})\cong D(2,1;-1-\al).\]
The maps $\al\mapsto \al^{-1}$ and $\al\mapsto -1-\al$ generate an action of $S_3$ on $\bc \backslash \{0,-1\}$.
We have $D(2,1;\al)\cong D(2,1;\beta)$  if and only if $\beta\in S_3 \cdot \al$. This gives additional
isomorphisms
\[ D(2,1;\al)\cong D(2,1;-(1+\al)^{-1}\al) \cong D(2,1;-1-\al^{-1})\cong D(2,1;-(1+\al)^{-1}). \]
Thus, any orbit of $(\bc\backslash\{0,-1\})/S_3$ consists of six points, except for the orbit
corresponding to the three points $\al = 1,-2,-1/2$, and the orbit corresponding to the
two points $\al=-\frac{1}{2}\pm\sqrt{-\frac{3}{4}}$.
Finally, note that the Lie superalgebra $D(2,1;\alpha)$ is isomorphic to $D(2,1)={\mathfrak{osp}}(4|2)$ when $\alpha=1, -2$ or $-\frac{1}{2}$.

Now we give a more detailed description of $D(2,1; \alpha)$.
For $j=1, 2, 3$, assume $\{e_j , f_j\}$ to be a basis of $V_j$ and the non-degenerate skew-symmetric bilinear form $\phi_j$ is defined by
\begin{equation}
\phi_j(e_j, e_j)=\phi_j (f_j, f_j)=0, ~~ \phi_j(e_j, f_j)=-\phi_j(f_j, e_j)=-1.
\label{phj}
\end{equation}
Since $\fs\fp(\phi_j)\cong \fs\fl(2, \bc)$  and $V_j$ is the 2-dimensional standard representation of $\fs\fp(\phi_j)$,
then we can choose $\{h_j, x_j , y_j\}$  to be a basis $\fs\fp(\phi_j) $ satisfying
\begin{equation}
\left\{ \begin{array}{l}
[h_j, x_j]=2x_j, ~ [h-j, y_j]=-2y_j, ~~ [x_j, y_j]=h_j;  \\
 h_j. f_j=-f_j, ~ h_j. e_j=e_j, ~~ x_j.f_j=e_j, ~~\\
 x_j. e_j=0, ~~ y_j. e_j=f_j, ~~ y_j. f_j=0.\end{array}\right.
 \label{bphj}
 \end{equation}

Consider $\fg=A_5$ and the order 4 automorphism $\si$ is represented in Table \ref{table1}(b).
Then $\si$ acts on the root vectors of simple roots of $\fg$ as follows:
\[
  E_{1,2}\stackrel{1}{\longleftrightarrow} E_{5,6}, ~ E_{2,3}\stackrel{i}{\longleftrightarrow} E_{4,5}, E_{3,4}\stackrel{1}{\longleftrightarrow} E_{3,4}. 
\]
 
Then we get: \\
\underline{a basis of $\fgaa$}:
\begin{eqnarray*}
 &X_1\stackrel{\triangle}{=}E_{1,2}+E_{5,6}, ~ H_1\stackrel{\triangle}{=}E_{1,1}-E_{2,2}+E_{5,5}-E_{6,6}, ~ Y_1\stackrel{\triangle}{=}E_{2,1}+E_{6,5}, ~ &  \\
  &X_2 \stackrel{\triangle}{=}E_{3,4}, ~ H_1\stackrel{\triangle}{=}E_{3,3}-E_{4,4}, ~ Y_2\stackrel{\triangle}{=}E_{4,3}, ~ &   \\
   &X_3\stackrel{\triangle}{=}E_{1,5}+E_{2,6}, ~ H_3\stackrel{\triangle}{=}E_{1,1}+E_{2,2}-E_{5,5}-E_{6,6}, ~ Y_3\stackrel{\triangle}{=}E_{5,1}+E_{6,2}, ~ &
\end{eqnarray*}
\underline{a basis of $\fgbb$}:
\begin{eqnarray*}
 &  v_1\stackrel{\triangle}{=}E_{4,1}+E_{6,3}, ~  v_2\stackrel{\triangle}{=}-E_{2,3}-E_{4,5}, ~ v_3\stackrel{\triangle}{=}E_{3,1}+E_{6,4}, ~ v_4\stackrel{\triangle}{=}E_{2,4}-E_{3,5}, ~& \\
 &  v_5\stackrel{\triangle}{=}-E_{4,2}-E_{5,3}, ~  v_6\stackrel{\triangle}{=}E_{4,6}-E_{1,3}, ~ v_7\stackrel{\triangle}{=}E_{5,4}-E_{3,2}, ~ v_8\stackrel{\triangle}{=}E_{1,4}+E_{3,6}. ~&
\end{eqnarray*}
Here $\fgaa=\bc\{X_1, H_1, Y_1\}\oplus\bc\{X_2, H_2, Y_2\}\oplus\bc\{X_3, H_3, Y_3\}\cong \fs\fl(2, \bc)^3$, with $\{X_i, H_i, Y_i\}$ $(i=1,2,3)$ standard triple of $\fs\fl(2, \bc)$.
And $\fgbb$ is an 8-dimensional irreducible representation of $\fgaa$.

Define a map $J$ on $\fgaa$ as follows: \[
J=\left\{ \begin{array}{ll}
2\alpha \cdot \id, &  \mbox{ on $A_1=\bc\{X_1, H_1, Y_1\}$}, \\
1\cdot  \id, &  \mbox{ on $A_1=\bc\{X_2, H_2, Y_2\}$},\\
2 \beta\cdot   \id, &  \mbox{ on $A_1=\bc\{X_3, H_3, Y_3\}$},  \end{array}
\right.
\]
where $\alpha, \beta\in \bc$.
An element in the center of $\fgaa+\fgbb$ is chosen to be \[
z=\frac{1}{3}(E_{1,1}+E_{2,2}-2E_{3,3}-2E_{4,4}+E_{5,5}+E_{6,6}).
\]

Recall that  $[v, w]_s=J([[z, v], w])$ for $v, w\in \fgbb$.
If $[\cdot, \cdot]_s$ gives a Lie superalgebra structure on $L=\fgaa+\fgbb$,
the Jacobi identity should holds.

Since \[
[v_2, [v_7, v_8]_s]=[v_2, 0]=0,
\]
and
\begin{eqnarray*}
&& [[v_2, v_7]_s, v_8]-[v_7, [v_2, v_8]_s]   \\
&=&[\frac{1}{2}(-2\alpha H_1+2\beta H_3)-H_2, E_{1,4}+E_{3,6}]-[E_{5,4}-E_{3,2}, -2\beta(E_{1,5}+E_{2,6})]  \\
&=& \frac{1}{2}(-2\alpha+2\beta-2)\cdot (E_{1,4}+E_{3,6})-2\beta\cdot (E_{1,4}+E_{3,6}) \\
&=&  (-\alpha-\beta-1)\cdot (E_{1,4}+E_{3,6}),
\end{eqnarray*}
then the Jacobi identity should holds for $v_2, v_7, v_8\in \fgbb$ only if $\alpha+\beta+1=0$.
Also when $\alpha=0$ or $\beta=0$, we get that $[\fgbb, \fgbb]_s=A_1^2\neq \fgaa$, then $L=\fgaa+\fgbb$
can not be a simple Lie superalgebra.
Then we get a necessary condition for $[\cdot, \cdot]_s$  to be a Lie superalgebra structure on $L=\fgaa+\fgbb$:
\begin{equation}
\alpha+\beta+1=0, ~ \alpha\neq 0, ~\beta\neq -2. 
\label{albe}
\end{equation}

Under this condition, a direct computation yields $L=\fgaa+\fgbb\cong D(2,1;\alpha)$, where the isomorphism is given by: 
\begin{eqnarray*}
& v_1 \mapsto f_1\otimes f_2\otimes f_3, ~~ v_2 \mapsto f_1\otimes f_2\otimes e_3, ~~ v_3 \mapsto f_1\otimes e_2\otimes f_3, ~~ v_4 \mapsto f_1\otimes e_2\otimes e_3, ~~ & \\
&v_5 \mapsto e_1\otimes f_2\otimes f_3, ~~ v_6 \mapsto e_1\otimes f_2\otimes e_3, ~~ v_7 \mapsto e_1\otimes e_2\otimes f_3, ~~ v_8 \mapsto e_1\otimes e_2\otimes e_3, ~~ & \\
& H_j \mapsto h_j, ~~ X_j \mapsto x_j, ~~ Y_j \mapsto y_j,  ~~ \mbox{ for } j=1,2,3. &
\end{eqnarray*}
Here $\{h_j, x_j, y_j, e_j, f_j| j=1,2,3\}$ is listed in (\ref{phj}) and (\ref{bphj}).  
Therefore we get the following result.

\begin{theorem}
For $(\fg, \si)$ in Table \ref{table1}(b) with $\fg=A_5$, let \[
J=\left\{ \begin{array}{ll}
2\alpha \cdot \id, &  \mbox{ on $A_1=\bc\{X_1, H_1, Y_1\}$}, \\
1\cdot  \id, &  \mbox{ on $A_1=\bc\{X_2, H_2, Y_2\}$},\\
-2(1+\alpha) \cdot   \id, &  \mbox{ on $A_1=\bc\{X_3, H_3, Y_3\}$},  \end{array}
\right.
\]
with $\alpha\neq 0, -1$.
Then the operator $[v, w]_s=J([[z, v], w])$ on $\fgbb$ induces a Lie superalgebra structure on $L=\fgaa+\fgbb$
with even part $\fgaa$ and odd part $\fgbb$.
Also $L\cong D(2,1; \alpha)$.

In particular, when $\displaystyle \alpha=-\frac{1}{2}$, we get $\displaystyle L\cong D(2,1; -\frac{1}{2})\cong D(2,1)$.
\label{thm5}
\end{theorem}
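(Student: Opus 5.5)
The plan is to verify the three defining conditions of a Lie superalgebra directly, using the explicit bases of $\fgaa$ and $\fgbb$ listed above, and then to identify the resulting superalgebra through the isomorphism already written down. First, $[\fgbb,\fgbb]_s\subset\fgaa$ because $z\in\fgcc$ and $[\fgcc,\fgbb]\subset\fgdd$, $[\fgdd,\fgbb]\subset\fgaa$. Symmetry of $[\cdot,\cdot]_s$ on $\fgbb$ follows from the Jacobi identity in $\fg$: $[[z,v],w]-[[z,w],v]=[z,[v,w]]$, and $[v,w]\in\fgcc$ commutes with the central element $z$.

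Next I check $\fgaa$-equivariance of $S^2(\fgbb)\to\fgaa$. The map $(v,w)\mapsto[[z,v],w]$ is $\fgaa$-equivariant, since $z$ commutes with $\fgaa$. $J$ is a scalar on each $\fs\fl(2,\bc)$ ideal, and each ideal is $\ad(\fgaa)$-stable, so $J$ commutes with $\ad(\fgaa)$. Hence $J\circ[[z,\cdot],\cdot]$ is equivariant, which gives the mixed super Jacobi identities involving one or two even elements.

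The main step is the purely odd identity $[v,[w,u]_s]_s+[w,[u,v]_s]_s+[u,[v,w]_s]_s=0$ for $u,v,w\in\fgbb$. Rather than checking all triples among $v_1,\dots,v_8$, I will first compute $[v_j,v_k]_s$ in the basis $\{h_j,x_j,y_j\}$. I then compare with (\ref{pp}): by equivariance and the fact that $\Lo\cong V_1\otimes V_2\otimes V_3$ (each $\fs\fl(2,\bc)$ has a one-dimensional space of invariant maps $S^2V_i\to\fs\fl(2,\bc)$), the map $[[z,\cdot],\cdot]$ must equal $P$ with some coefficients $(c_1,c_2,c_3)$. The computation of $[v_2,v_7]_s$ and $[v_2,v_8]_s$ above pins these down as proportional to $(1,1,1)$, up to the normalization of $z$. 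Composing with $J$ multiplies the coefficients to $(2\alpha,1,2\beta)$ up to a common factor, with the factor $\tfrac12$ coming from the $h_j$ normalization. By the quoted criterion, $L$ is a Lie superalgebra exactly when the coefficients sum to zero, which is $\alpha+\beta+1=0$, i.e.\ $\beta=-(1+\alpha)$.

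With this choice, the bracket coincides with $\Gamma(\alpha,1,-1-\alpha)=D(2,1;\alpha)$ under the stated identification $v_\bullet\mapsto$ tensors of $e_j,f_j$. This identification is checked on weights, and it intertwines $H_j,X_j,Y_j$ with $h_j,x_j,y_j$. Simplicity for $\alpha\neq0,-1$ then follows from the known properties of $D(2,1;\alpha)$, and $\alpha=-\tfrac12$ gives $D(2,1)$.

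The expected obstacle is fixing the proportionality constants $c_j$ correctly, including the signs coming from the $\phi_j$ conventions. I would first compute them on highest-weight pairs such as $(v_8,v_1)$ and $(v_7,v_2)$, which isolate the $h$-components of each factor.
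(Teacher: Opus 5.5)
Your route differs from the paper's and, once repaired, is more complete. The paper only extracts the necessary condition $\alpha+\beta+1=0$ from the single triple $(v_2,v_7,v_8)$, and then asserts that ``a direct computation'' identifies $L$ with $D(2,1;\alpha)$. Your equivariance argument instead reduces the purely odd super Jacobi identity to three constants, and lets the criterion $\alpha_1+\alpha_2+\alpha_3=0$ for $\Gamma(\alpha_1,\alpha_2,\alpha_3)$ give necessity and sufficiency at once.

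\textbf{The error.} The step you single out as the obstacle is wrong as written. The map $(v,w)\mapsto[[z,v],w]$ equals $P$ with $(c_1,c_2,c_3)=(\tfrac12,1,\tfrac12)$, not with coefficients proportional to $(1,1,1)$. Take $v_2\mapsto f_1\otimes f_2\otimes e_3$, $v_7\mapsto e_1\otimes e_2\otimes f_3$, $v_8\mapsto e_1\otimes e_2\otimes e_3$, and use $\phi_j(f_j,e_j)=1$, $P_j(f_j,e_j)=h_j$, $P_j(e_j,e_j)=-2x_j$. The ansatz then gives $P(v_2,v_7)=-c_1h_1-c_2h_2+c_3h_3$ and $P(v_2,v_8)=-2c_3x_3$. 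On the other hand, $[[z,v_2],v_7]=\tfrac12(H_3-H_1)-H_2$ and $[[z,v_2],v_8]=-X_3$. Matching coefficients gives $(c_1,c_2,c_3)=(\tfrac12,1,\tfrac12)$.

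\textbf{Why it cannot be absorbed into a normalization of $h_j$.} The isomorphism of the $j$-th $\fs\fl(2,\bc)$ with $\fs\fp(\phi_j)$ is forced by its action on the tensor factor $V_j$; it must be $H_j\mapsto h_j$. Rescaling any $\phi_j$, or the identification $\fgbb\cong V_1\otimes V_2\otimes V_3$, rescales all three terms of $P$ equally. So the ratio $c_1:c_2:c_3$ is intrinsic. The $\tfrac12$ is real: it records that the trace form gives $H_1$ and $H_3$ twice the norm of $H_2$.

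Taken literally, your chain $(1,1,1)\mapsto(2\alpha,1,2\beta)$ yields the condition $2\alpha+2\beta+1=0$, not $\alpha+\beta+1=0$. It would also exclude $\alpha=\beta=-\tfrac12$. There $J$ is $-1$ on $\bc\{X_1,H_1,Y_1\}\oplus\bc\{X_3,H_3,Y_3\}$ and $+1$ on $\bc\{X_2,H_2,Y_2\}$. Hence $[v,w]_s=-(vw+wv)$, which is minus the supercommutator in $\mathfrak{gl}(4|2)$ (rows and columns $1,2,5,6$ even, $3,4$ odd), and plainly a Lie superalgebra.

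\textbf{The fix.} With the correct constants, composing with $J$ gives exactly $(\alpha,1,\beta)$. The criterion then gives $\beta=-1-\alpha$, and $L=\Gamma(\alpha,1,-1-\alpha)=D(2,1;\alpha)$ with no rescaling, which completes your argument.

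When you check the identification on weights, note also that the printed $v_1=E_{4,1}+E_{6,3}$ lies in $\fgdd$. The vector corresponding to $f_1\otimes f_2\otimes f_3$ is $[Y_3,v_2]=E_{4,1}-E_{6,3}$.
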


\bigskip

\noindent{Proof of Theorem \ref{main}}: Theorem \ref{main} is proved by Theorems \ref{thm1}, \ref{thm2}, \ref{thm2.5}, \ref{thm3}, \ref{thm4} and \ref{thm5}.
The results are summarized in Table \ref{table6}. 

\begin{table}[ht]\renewcommand{\arraystretch}{2.4}
\centering
\begin{tabular}{|c|c|c|c|c| }
  \hline
  $\fg$ & $\si$ & $\fgaa$ &  $\fgbb$  & Lie superalgebra \\
  \hline
{ $ A_{2n-1} $} &\includegraphics[width=3.5cm]{A4.png}  &  {$ C_{n-1}\oplus \bc
  $} &    $\bc^{2n-2}\oplus(\bc^{2n-2})^*$  & $C(n)$ \\
    \cline{2-5}
&\includegraphics[width=3.5cm]{A5.png}  &  { $ D_p\oplus C_{n-p}   $} &    $\bc^{2p}\otimes\bc^{2n-2p}$ & $D(p, n-p)$\\
 \hline
{ $ A_{2n} $}  &\includegraphics[width=3.5cm]{A7.png}  &   { $ C_p\oplus B_{n-p}  $} &    $\bc^{2n-2p+1}\otimes\bc^{2p} $ & $B(n-p, p)$\\
    \cline{2-5}
   &\includegraphics[width=3.5cm]{A8.png}  &  { $ C_n
 $}&  $\bc^{2n} $ & {$B(0,n)$ } \\
  \hline
 {$ D_{n}$}
&\includegraphics[width=3.5cm]{D2-3.png}  &  { $  A_{n-2}\oplus  \bc  $} &    $\bc^{n-1}\otimes \bc\oplus (\bc^{n-1})^*\otimes \bc^*$ &   $ A(n-2, 0)$ \\
  \hline
  { $ E_6  $}     &\includegraphics[width=3.5cm]{E6-22.png}  &  { $B_{3}\oplus A_{1} $} &  $\mathrm{spin}_7 \otimes \bc^2$  & $F(4)$  \\
  \hline
  {$A_5$}
  &\includegraphics[width=1.8cm]{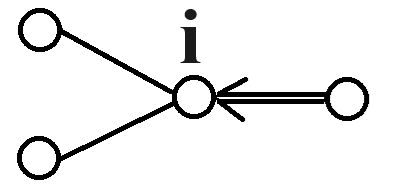}  &  { $  A_1^3  $} &    $\bc^{2}\otimes \bc^{2}\otimes\bc^{2} $ &   $ D(2, 1; \alpha)$ \\
  \hline
\end{tabular}
\caption{Generalized symmetric pairs  with symplectic subspace and Lie superalgebras}
\label{table6}
\end{table}
In Table \ref{table6}, we list the information about $\fgaa$ and $\fgbb$ attached to each generalized symmetric pair  with symplectic subspace $(\fg, \si)$. 
Here $\bc^n$ stands for the standard representation of the associated classical Lie algebra.
And $\mathrm{spin}_7$ stands for the 8 dimensional irreducible spinor representation of Lie algebra $B_3$. 
 $\hfill$$\Box$


\bigskip

If we consider $\fg$ to be semisimple, we have the following example.
\begin{remark}
\label{remAmn}
For $\fg=A_n\oplus A_n$, let $\si$ be an order 4 automorphism which permutes the two simple ideals of $\fg$.
Then $\sigma$ is outer,  $\fgaa=A_p\oplus A_q\oplus \bc$ and $\fgaa+\fgcc=A_p^2\oplus A_q^2\oplus \bc^2$ $(p+q=n-1)$.
Hence $\fgaa+\fgcc$ has a 1-dimensional center in $\fgcc$.
Similarly we can construct a Lie superalgebra structure on $\fgaa+\fgbb$.
And this Lie superalgebra is isomorphic to $A(p, q)$. 

The even part of the Lie superalgebra $G(3)$ contains a simple Lie algebra of type $G_2$, whose Dynkin diagram has a triple edge. In our theory, constructing $G(3)$ would require an outer automorphism with an orbit of three roots. It is well known that among complex simple Lie algebras, only $D_4$ admits an outer automorphism of order $3$. However, since $\dim D_4 = 28 < \dim G(3) = 31$, the Lie algebra $D_4$ is not large enough to accommodate $G(3)$ via our construction.

\end{remark}

\section{Dirac constants for Lie algebras and Lie superalgebras}

Let $(\fg, \si)$ be a generalized symmetric pair with symplectic subspace, and let $L=\fgaa+ \fgbb$ be the Lie superalgebra associated to this pair in the sense of Theorem \ref{main}. 
Then $\sigma^2$  is evidently an involution of Hermitian type of $\mathfrak{g}$. 
We study the relationship between two Dirac operators: 
the Dirac operator $D$ for the Lie algebra $\mathfrak{g} = \mathfrak{g}^{\sigma^2} + \mathfrak{g}^{-\sigma^2}$, 
and the Dirac operator $D_s$ for the associated Lie superalgebra $L = \fgaa+ \fgbb$.

Let $(\fg, \si)$ be a generalized symmetric pair with symplectic subspace and $z\in \fgcc$ be a nonzero element in the center of $\fgaa+\fgcc$.  
Together with $z$, the Killing form $\kapg(\cdot, \cdot)$ of $\fg$ induces the symplectic form on $\fgbb$ and $\fgdd$,  defined by 
\[
\langle X, Y\rangle =\kapg([z, X], Y), ~ \forall X, Y\in\fgbb.
\]

Choose $\{x_1, x_2, \ldots, x_s, \partial_1, \partial_2, \ldots, \partial_s\}$ to be a basis of $\fgbb$ satisfying $\langle x_i, \partial_j\rangle=\delta_{ij}$ 
and $\langle x_i , x_j\rangle=\langle \partial_i ,\partial_j\rangle=0$ for $1\leq i, j\leq s$.

\begin{lemma} 
With notation as above, then $\{-[z, \partial_1], -[z, \partial_2], \ldots, -[z, \partial_s], [z, x_1],   $ \\$ [z, x_2], \ldots, [z, x_s]\}$ is an orthonormal basis of $\fgdd$ with respect 
to the symplectic form $\langle \cdot, \cdot \rangle= \kapg([z, \cdot], \cdot)$.  
Also $\langle [z, x_i], -[z, \partial_j]\rangle=\delta_{ij}$ for any $1\leq i, j\leq s$. 
\end{lemma}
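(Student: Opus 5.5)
The plan is to compute the pairing directly from the invariance of $\kapg$, the fact that $z\in\fgcc$ is central in $\fgaa+\fgcc$, and the normalization of $\ad z$ on $\fg^{-\si^2}$. First, since $\si z=-z$ and $\si$ multiplies $\fgbb$ by $i$, we have $\si[z,X]=-i[z,X]$ for $X\in\fgbb$, so $\ad z$ maps $\fgbb$ into $\fgdd$. Similarly $\ad z$ maps $\fgdd$ into $\fgbb$.

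Next we make $\ad z$ invertible with a controlled square. Because $z$ is central in $\fg^{\si^2}$ and $\si^2$ is an involution of Hermitian type, $\ad z$ is semisimple on $\fg^{-\si^2}=\fgbb+\fgdd$. Its eigenvalues there are $\pm c$ for a single $c\neq 0$, since $\fg^{-\si^2}$ splits into the two irreducible pieces $\fp^\pm$ on which $z$ acts by opposite scalars. Rescaling $z$, as is done in (\ref{zzz}) for case (a), we may take $c=1$, so that $(\ad z)^2=\id$ on $\fgbb+\fgdd$. In particular $\ad z:\fgbb\to\fgdd$ is a bijection, and the $2s$ listed vectors form a basis of $\fgdd$.

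Then we compute the form on $\fgdd$ using invariance. For $X,Y\in\fgbb$,
\[
\langle [z,X],[z,Y]\rangle=\kapg([z,[z,X]],[z,Y])=\kapg(X,[z,Y])=-\kapg([z,X],Y)=-\langle X,Y\rangle .
\]
The second equality uses $(\ad z)^2=\id$, and the third uses invariance of $\kapg$. Applying this with $X=x_i$, $Y=\partial_j$ gives
\[
\langle [z,x_i],-[z,\partial_j]\rangle=\langle x_i,\partial_j\rangle=\delta_{ij}.
\]
The same identity gives $\langle [z,x_i],[z,x_j]\rangle=-\langle x_i,x_j\rangle=0$ and likewise for the $\partial$'s. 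Hence, under the ordering in the statement, $\{-[z,\partial_j]\}$ and $\{[z,x_i]\}$ form a symplectic (Darboux) basis of $\fgdd$. This is what the paper means by "orthonormal", with the roles of $x$ and $\partial$ interchanged.

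The main obstacle is justifying $(\ad z)^2=\id$ rather than some other scalar. If one prefers to avoid the Hermitian-type argument, the identity can be weakened to $(\ad z)^2=\lambda\,\id$ with $\lambda\neq0$ constant, which follows from Schur's lemma on the irreducible pieces of $\fg^{-\si^2}$ under $\fg^{\si^2}$. This requires checking that both pieces carry eigenvalues $\pm\sqrt\lambda$ of $\ad z$, which holds because $\kapg$ pairs them. The factor $\lambda$ is then absorbed by normalizing $z$, consistent with the choice "by multiplying a nonzero coefficient" already used in the paper.
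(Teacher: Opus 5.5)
Your proof is correct and follows the paper's argument. Like you, the paper computes $\langle [z,x_i],-[z,\partial_j]\rangle$ by invariance of $\kapg$, and it tacitly uses the normalization $[z,[z,x_i]]=x_i$, which you justify explicitly from the Hermitian-type structure of $\si^2$. The paper also records the Killing-form duality $\kapg(-[z,\partial_i],x_j)=\kapg([z,x_i],\partial_j)=\delta_{ij}$, immediate from the definition of $\langle\cdot,\cdot\rangle$, which is what the Dirac operator $D$ uses; your version instead adds the checks that the vectors form a basis of $\fgdd$ and that the $x$--$x$ and $\partial$--$\partial$ pairings vanish.
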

\begin{proof}
Since $\langle x_i, \partial_j\rangle =\delta_{ij}=\kapg([z, x_i], \partial_j)=-\langle \partial_j, x_i\rangle =-\kapg([z, \partial_j], x_i)$, 
then \[
\kapg(-[z, \partial_i], x_j)=\kapg([z, x_i], \partial_j)=\delta_{ij}.\] 
Also we have 
\[
\langle [z, x_i], -[z, \partial_j]\rangle=\kapg([z,[z, x_i]], -[z, \partial_j])=-\kapg( x_i,  [z, \partial_j])=-\kapg( [z, \partial_j], x_i)=\langle x_i, \partial_j\rangle.
\]
\end{proof}

The Dirac for the Lie algebra $\fg=\fg^{\si^2}\oplus \fg^{-\si^2}$ is 
\begin{eqnarray*}
D&=&\sum_{i=1}^s (-x_i\otimes [z, \partial_i]+ \partial_i \otimes [z, x_i]- [z, \partial_i]\otimes x_i + [z, x_i]\otimes\partial_i)  \\
&& \in \mathcal{U}(\fg)\otimes \mathcal{C}(\fgbb+\fgdd).
\end{eqnarray*}
where $\mathcal{C}(\fgbb+\fgdd)$ stands for the Weyl algebra of $\fg^{-\si^2}=\fgbb+\fgdd$ with respect to the Killing form $\kapg(\cdot, \cdot)$ on it.

\begin{proposition}{\cite[Prop.3.1.6]{HP06}}
The square of the Dirac operator $D$  for $\fg=\mathfrak{k}+\mathfrak{p}$ is given by 
\[
D^2=-\Omega_{\fg}\otimes 1+\Omega_{\mathfrak{k}_\Delta}+C,
\]
where $C$ is the constant $\|\rho_{\mathfrak{k}}\|^2-\|\rho_{\fg}\|^2$.
\label{HP1}
\end{proposition}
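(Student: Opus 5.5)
The plan is to follow the standard Parthasarathy computation in the form given by Huang and Pand\v{z}i\'c. Write $\fg=\mathfrak{k}+\mathfrak{p}$ and fix a basis $\{Z_j\}$ of $\mathfrak{p}$ orthonormal for $\kapg$. Then
\[
D=\sum_j Z_j\otimes Z_j\in \mathcal{U}(\fg)\otimes \mathcal{C}(\mathfrak{p}),
\]
and the concrete basis displayed above for $\fgbb+\fgdd$ is just a change of basis of this expression. Let $\alpha:\mathfrak{k}\to \fo(\mathfrak{p})\hookrightarrow \mathcal{C}(\mathfrak{p})$ be the map through $\wedge^2\mathfrak{p}$, normalized so that $[\alpha(W),Z]=[W,Z]$ for $W\in\mathfrak{k}$ and $Z\in\mathfrak{p}$. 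Let $\mathfrak{k}_\Delta$ be the image of $W\mapsto W\otimes 1+1\otimes\alpha(W)$. Throughout I use the Clifford relation $ZZ'+Z'Z=-2\kapg(Z,Z')$ of $\mathcal{C}(\mathfrak{p})$; the signs in the statement are tied to this convention.

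\textbf{Step 1: split $D^2$.} Expand
\[
D^2=\sum_{i,j}Z_iZ_j\otimes Z_iZ_j
\]
into its diagonal and off-diagonal parts. The diagonal terms give $\sum_i Z_i^2\otimes Z_i^2=-\Omega_{\mathfrak{p}}\otimes 1$, where $\Omega_{\mathfrak{p}}=\sum Z_i^2$. For the off-diagonal terms, antisymmetry of $Z_iZ_j$ in $\mathcal{C}(\mathfrak{p})$ when $i\neq j$ turns them into $\tfrac12\sum_{i\neq j}[Z_i,Z_j]\otimes Z_iZ_j$. Since $[Z_i,Z_j]\in\mathfrak{k}$, I expand it in a basis $\{W_k\}$ of $\mathfrak{k}$ with $\kapg$-dual basis $\{W_k^*\}$. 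Using $\kapg([Z_i,Z_j],W_k^*)=\kapg(Z_i,[Z_j,W_k^*])$ and the explicit formula for $\alpha$, this term should become $-2\sum_k W_k\otimes\alpha(W_k^*)$.

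\textbf{Step 2: compare with the diagonal Casimir.} Expanding directly,
\[
\Omega_{\mathfrak{k}_\Delta}=\Omega_{\mathfrak{k}}\otimes 1+2\sum_k W_k\otimes\alpha(W_k^*)+1\otimes\alpha(\Omega_{\mathfrak{k}}).
\]
Adding this to Step 1, and using $\Omega_{\fg}=\Omega_{\mathfrak{k}}+\Omega_{\mathfrak{p}}$, gives
\[
D^2=-\Omega_{\fg}\otimes 1+\Omega_{\mathfrak{k}_\Delta}-1\otimes\alpha(\Omega_{\mathfrak{k}}),
\]
possibly up to a constant coming from reordering the two tensor factors. It then remains to show that $\alpha(\Omega_{\mathfrak{k}})$ is the scalar $\|\rho_\fg\|^2-\|\rho_{\mathfrak{k}}\|^2$, so that $C=\|\rho_{\mathfrak{k}}\|^2-\|\rho_\fg\|^2$.

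\textbf{Step 3: the constant, which I expect to be the main obstacle.} Because $\alpha(\Omega_{\mathfrak{k}})$ is $\mathfrak{k}$-invariant and of filtration degree at most $4$ in $\mathcal{C}(\mathfrak{p})$, the first task is to show it is a scalar. I would first try to see this directly: the quartic part cancels by the Jacobi identity, and the quadratic part cancels by antisymmetry.

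Once it is a scalar, its value can be read off on any $\mathcal{C}(\mathfrak{p})$-module. I would evaluate it on the spin module $S$, using Kostant's description of $S$ as a $\mathfrak{k}$-module. In the equal rank case, or after the usual reduction, $S$ contains a $\mathfrak{k}$-highest weight vector of weight $\rho_\fg-\rho_{\mathfrak{k}}$, with $\rho_{\mathfrak{k}}$ computed for a compatible positive system. The Casimir value there is
\[
(\rho_\fg-\rho_{\mathfrak{k}},\,\rho_\fg-\rho_{\mathfrak{k}}+2\rho_{\mathfrak{k}})=\|\rho_\fg\|^2-\|\rho_{\mathfrak{k}}\|^2 .
\]

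The delicate points are the normalization of $\alpha$ and the Clifford sign convention, both of which must match the stated sign of $C$. For the Hermitian symmetric pairs arising here from $\si^2$, the ranks of $\fg$ and $\fg^{\si^2}$ are equal, so the equal-rank argument applies directly. As a cross-check I would also compute the trace of $\alpha(\Omega_{\mathfrak{k}})$ on $S$ using the Freudenthal--de Vries formula.
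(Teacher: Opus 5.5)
Your argument is the standard Parthasarathy computation, which is the proof behind the result the paper imports from Huang--Pand\v{z}i\'c (the paper gives no proof of its own), and it is correct in outline. One sign slip needs fixing: with your normalization $[\alpha(W),Z]=[W,Z]$, i.e.\ $\alpha(W)=\frac14\sum_{i,j}\kapg(W,[Z_i,Z_j])Z_iZ_j$, the off-diagonal term in Step 1 equals $+2\sum_k W_k\otimes\alpha(W_k^*)$, not $-2\sum_k W_k\otimes\alpha(W_k^*)$ (with $-2$ the addition in Step 2 does not give your formula), and since no constant arises from reordering the tensor factors you get $C=-\alpha(\Omega_{\mathfrak{k}})$ exactly.
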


The Dirac operator for the Lie superalgebra $L=\fgaa+\fgbb$ is 
\[
D_s=\sum_{i=1}^s (x_i\otimes \partial_i-\partial_i\otimes x_i) \in \mathcal{U}(L)\otimes \mathcal{W}(\fgbb),
\]
where $ \mathcal{W}(\fgbb)$ stands for the Weyl algebra of $\fgbb$ with respect to the symplectic form $\langle \cdot, \cdot\rangle$ on it. 

\begin{proposition}[{\cite[Prop.2]{HP05}}]
The square of the Lie superalgebra Dirac operator $D_s$ for $L=\Lz\oplus \Lo$  is given by  
\[
D_s^2 =-\Omega_L\otimes 1 + \Omega_{\Lz \Delta} - C_s, 
\]
where $\Omega_L$ is the Casimir element of the Lie superalgebra $L$, $\Omega_{\Lz}$ is the Casimir element of the even part $\Lz$, and $C_s$ is a constant depending on the specific Lie superalgebra type. 
\label{HP2}
\end{proposition}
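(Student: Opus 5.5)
The plan is to expand $D_s^2$ directly in $\mathcal{U}(L)\otimes\mathcal{W}(\fgbb)$ and separate the result into a part living in $\mathcal{U}(L)\otimes 1$ and a part expressible through the diagonal embedding of $\Lz$. Write $D_s=\sum_a e_a\otimes e^a$, where $\{e_a\}=\{x_i,\partial_i\}$ is the symplectic basis of $\Lo=\fgbb$ and $\{e^a\}$ is the dual basis with respect to $\langle\cdot,\cdot\rangle$ (so $e^a=\partial_a$ for $e_a=x_a$ and $e^a=-x_a$ for $e_a=\partial_a$). Then
\[
D_s^2=\sum_{a,b} e_ae_b\otimes e^ae^b .
\]
Two relations drive the computation. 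In $\mathcal{U}(L)$ the odd elements satisfy $e_ae_b+e_be_a=[e_a,e_b]_s\in\Lz$. In the Weyl algebra, $e^ae^b-e^be^a=\langle e^a,e^b\rangle$ is a scalar.

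First I will decompose $e_ae_b=\frac12[e_a,e_b]_s+\frac12(e_ae_b-e_be_a)$, and similarly decompose $e^ae^b$ into its symmetric part $\frac12(e^ae^b+e^be^a)$ and its scalar antisymmetric part. The mixed terms cancel under the double sum. This leaves two pieces.

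The antisymmetric$\otimes$antisymmetric piece is $\frac12\sum_{a,b}\langle e^a,e^b\rangle\, e_ae_b\otimes 1$. This collapses to a sum of the form $\sum_a e_ae^{a}$ in $\mathcal{U}(L)$, which is exactly the odd part of the super Casimir $\Omega_L$, up to the normalisation of the invariant form.

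The symmetric$\otimes$symmetric piece is $\frac14\sum_{a,b}[e_a,e_b]_s\otimes(e^ae^b+e^be^a)$. To handle it, expand $[e_a,e_b]_s=\sum_i B_L(X_i^*,[e_a,e_b]_s)X_i$ in a basis of $\Lz$ with dual basis $\{X_i^*\}$. For fixed $i$, invariance of $B_L$ turns the coefficient into $\langle [X_i^*,e_a],e_b\rangle$ (up to sign). Hence $\sum_{a,b}\langle[X_i^*,e_a],e_b\rangle\, e^ae^b$ symmetrised is, up to a universal factor, Kostant's element $\nu^*(X_i^*)\in S^2(\Lo)\subset\mathcal{W}(\Lo)$. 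So this piece equals $c\sum_iX_i\otimes\nu^*(X_i^*)$ for an explicit constant $c$.

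Next I will expand the diagonal Casimir. With $X_\Delta=X\otimes1+1\otimes\nu^*(X)$,
\[
\Omega_{\Lz\Delta}=\Omega_{\Lz}\otimes 1+2\sum_iX_i\otimes\nu^*(X_i^*)+1\otimes\nu^*(\Omega_{\Lz}).
\]
Comparing this with the two pieces above, the cross term $\sum_iX_i\otimes\nu^*(X_i^*)$ is exactly accounted for by $\Omega_{\Lz\Delta}$. The even part of the super Casimir comes in as $\Omega_{\Lz}\otimes1$, which combines with the odd part already found to form $-\Omega_L\otimes1$. What remains is $-1\otimes\nu^*(\Omega_{\Lz})$.

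Finally I will invoke Kostant's criterion, recalled in the introduction: since $L=\Lz\oplus\Lo$ is a Lie superalgebra (the super Jacobi identity on $\Lo\times\Lo\times\Lo$), $\nu^*(\Omega_{\Lz})$ is a scalar in $\mathcal{W}(\Lo)$. Setting $C_s=\nu^*(\Omega_{\Lz})$ gives the stated identity.

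I expect the main obstacle to be bookkeeping rather than ideas. One must fix signs and normalisations so that the coefficient of the cross term is exactly $2$: these come from the supersymmetric invariant form, the sign convention for $e^a$, and the identification $\fsp(\Lo)\cong S^2(\Lo)$ used to define $\nu^*$. Only then does it match $\Omega_{\Lz\Delta}$, and the leftover $U(L)$-part is exactly $-\Omega_L$ rather than a multiple of it. I would settle these constants first by testing on the smallest case $B(0,1)=\mathfrak{osp}(1|2)$, where everything is explicit.
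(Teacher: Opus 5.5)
Your plan is correct and is essentially the standard computation behind \cite[Prop.2]{HP05}, which the paper quotes without proof. The argument has three steps: split $D_s^2$ into symmetric and antisymmetric parts, match the symmetric part with the cross term of $\Omega_{\Lz\Delta}$ via $\nu^*$ (normalized by $[\nu^*(X),v]=[X,v]$ in $\mathcal{W}(\Lo)$), and obtain the constancy of $C_s=\nu^*(\Omega_{\Lz})$ from Kostant's criterion (Proposition \ref{Kos}).

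On the bookkeeping you deferred, take your relation $vw-wv=\langle v,w\rangle$ and form $\Omega_L$ from the invariant form that restricts to $\langle\cdot,\cdot\rangle$ on $\Lo$. Then $c=1$, and the $\mathcal{U}(L)$-part is $\frac12\sum_a e_ae^a=-\frac12(\Omega_L-\Omega_{\Lz})$. Note the sign here: the odd part of $\Omega_L$ is $-\sum_a e_ae^a$, not $+\sum_a e_ae^a$. So with your conventions $D_s^2$ is exactly one half of the stated right-hand side. Both discrepancies are the same factor, so the identity holds verbatim once the Weyl relation is $vw-wv=2\langle v,w\rangle$. Equivalently, keep your Weyl relation and form the Casimirs with the invariant form equal to $2\langle\cdot,\cdot\rangle$ on $\Lo$.
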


By Proposition \ref{HP1} and Table \ref{Weylvec}, we have the following results about the Dirac constant for Lie algebras.
\begin{theorem}
Let $(\fg, \si)$ be a generalized symmetric pair with symplectic subspace. Let $D$ be the dirac operator with respect to $\fg=\fg^{\si^2}+\fg^{-\si^2}$. Then the constant $C$ in $D^2$ is given in Table \ref{DcLie}.

 \begin{table}[ht] 
{\footnotesize
\renewcommand{\arraystretch}{1.4}
\begin{center}
\begin{tabular}{cccccccc} 
\hline
Case & Lie Algebra $\mathfrak{g}$ & Subalgebra $\mathfrak{g}^{\sigma^2}$ & $\|\rho_{\mathfrak{g}}\|^2$ &  $\|\rho_{\mathfrak{g}_{\sigma^2}}\|^2$ & $C=\|\rho_{\mathfrak{g}_{\sigma^2}}\|^2-\|\rho_{\mathfrak{g}}\|^2$  \\
\hline
(a) & $A_{2n-1}$ & $A_{2n-3} \oplus A_1 \oplus \mathbb{C}$ & $\frac{n(4n^2-1)}{6}$ &  $\frac{4n^3-12n^2+11n}{6}  $   & $-2n(n-1)$ \\
(b) & $A_{2n-1}$ & $A_{2p-1} \oplus A_{2q-1} \oplus \mathbb{C}$ & $\frac{n(4n^2-1)}{6}$ & $\frac{p(4p^2-1) + q(4q^2-1)}{6}$  & $-2npq$ \\
(c) & $A_{2n}$ & $A_{2p-1} \oplus A_{2q} \oplus \mathbb{C}$ & $\frac{n(2n+1)(2n+2)}{6}$ & $\frac{p(4p^2-1) + q(2q+1)(2q+2)}{6}$ & $-2npq-2pq-p^2-\frac{p}{2}$ \\
(d) & $A_{2n}$ & $A_{2n-1} \oplus \mathbb{C}$ & $\frac{n(2n+1)(2n+2)}{6}$ &$\frac{n(4n^2-1)}{6}$  &  $-\frac{n(2n+1)}{2}$\\
(e) & $D_n$ & $D_{n-1} \oplus \mathbb{C}$ & $\frac{n(n-1)(2n-1)}{6}$   & $\frac{(n-1)(n-2)(2n-3)}{6}$& $-(n-1)^2$  \\
(f) & $E_6$ & $D_5 \oplus \mathbb{C}$ & 78 &  30 &  $ -48$ \\
(g) & $A_5$ & $A_1^{\oplus 3} \oplus \mathbb{C}$ & 35/2 & 3/2 &  $-16$  \\
\hline
\end{tabular}\end{center} }
\caption{Dirac constant for generalized symmetric pairs}
\label{DcLie}
\end{table}
 \end{theorem}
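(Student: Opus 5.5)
The plan is to apply Proposition \ref{HP1} directly, with $\mathfrak{k}=\fg^{\si^2}=\fgaa+\fgcc$ and $\mathfrak{p}=\fg^{-\si^2}=\fgbb+\fgdd$. That proposition gives $C=\|\rho_{\mathfrak k}\|^2-\|\rho_\fg\|^2$, so the statement reduces to computing two squared norms of Weyl vectors in each row of Table \ref{table1}. I will use the decomposition of $\fg^{\si^2}$ given in the last column of Table \ref{table1} (equivalently Table \ref{table2}).

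\textbf{Step 1 (normalization).} Both norms must be taken with respect to the same form on a common Cartan subalgebra: the restriction of the form on $\fh^*$ induced from $\fg$. Since $\fg^{\si^2}$ is the fixed algebra of an inner involution, it contains a Cartan subalgebra $\tilde\fh$ of $\fg$. Its root system is then a subsystem of $\Delta(\fg,\tilde\fh)$. In all cases $\fg$ is simply laced ($A$, $D$, $E_6$), so every root of each simple factor of $\fg^{\si^2}$ has squared length $2$ in the normalization of Table \ref{Weylvec}. Hence the values of $\|\rho\|^2$ in Table \ref{Weylvec} can be used for each simple factor without rescaling. The central $\bc$ contributes nothing, since $\rho_{\mathfrak k}$ lies in the span of the roots of $[\mathfrak k,\mathfrak k]$. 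Finally, $\|\rho_{\mathfrak k}\|^2$ is the sum of the contributions of the simple factors, because distinct factors are orthogonal.

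\textbf{Step 2 (case-by-case arithmetic).} For type $A_m$, Table \ref{Weylvec} gives $\|\rho\|^2=m(m+1)(m+2)/12$.
\begin{itemize}
\item In case (a), $A_{2n-1}$ gives $n(4n^2-1)/6$. The subalgebra contributes $(n-1)(2n-1)(2n-3)/6$ from $A_{2n-3}$ and $1/2$ from $A_1$. The difference simplifies to $-2n(n-1)$.
\item Cases (b), (c) and (d) are handled the same way. One substitutes $m=2p-1,2q-1$ or $m=2p-1,2q$ and uses $p+q=n$ to simplify the polynomial differences.
\item In case (e), the formula $n(n-1)(2n-1)/6$ for $D_n$ gives $C=\|\rho_{D_{n-1}}\|^2-\|\rho_{D_n}\|^2=-(n-1)^2$.
\item In case (f), $\|\rho_{D_5}\|^2=30$ and $\|\rho_{E_6}\|^2=78$, so $C=-48$.
\end{itemize}

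\textbf{Step 3 (main obstacle).} The only real difficulty is bookkeeping. One must identify $\fg^{\si^2}$ correctly, including which simple factors occur and with which ranks, and make sure no factor is dropped. Row (g) of Table \ref{DcLie} is the $D(2,1;\alpha)$ case, which is (b) with $n=3$, $p=1$, $q=2$. There I will recompute $\fg^{\si^2}$ directly from the marking of $\si^2$ on $A_5$ (via Theorem \ref{Kac}) and check the resulting value against the general formula $-2npq$, because it is the row where a mismatch in the subalgebra data could occur.
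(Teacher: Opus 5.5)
You follow the paper's own route: Proposition \ref{HP1} with $\mathfrak{k}=\fg^{\si^2}$, then Table \ref{Weylvec} applied factor by factor. Your arithmetic for rows (a)--(f) is correct.

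The gap is row (g). You defer it to a later ``recompute and check'' without saying what the check gives, and if you carry it out, it refutes the row rather than proving it. As you note in Step 1, $\si^2$ is inner, so $\fg^{\si^2}$ contains a Cartan subalgebra of $A_5$ and has rank $5$. But $A_1^{\oplus 3}\oplus\bc$ has rank $4$: its three $A_1$'s are the factors of $\fgaa$, not of $\fg^{\si^2}$.

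Concretely, the paper's central element $z=\frac13\mathrm{diag}(1,1,-2,-2,1,1)\in\fgcc$ has centralizer $\mathfrak{s}(\mathfrak{gl}_4\oplus\mathfrak{gl}_2)$, with blocks $\{1,2,5,6\}$ and $\{3,4\}$. Hence $\fg^{\si^2}\cong A_3\oplus A_1\oplus\bc$ and $\|\rho_{\fg^{\si^2}}\|^2=5+\frac12=\frac{11}{2}$, so
\[
C=\frac{11}{2}-\frac{35}{2}=-12=-2npq\qquad (n=3,\ p=2,\ q=1),
\]
not $-16$. The table's value $\frac32$ is just $3\|\rho_{A_1}\|^2$ summed over the factors of $\fgaa$, and the paper's one-line justification carries the same error. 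So row (g) cannot be proved as written. Your argument has to identify it as an instance of case (b) and correct the entries to $A_3\oplus A_1\oplus\bc$, $\frac{11}{2}$ and $-12$.

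A minor point on labelling: the $D(2,1;\alpha)$ case is $p=2$, $q=1$, since $D_2\oplus C_1\cong A_1^{\oplus 3}$. Your choice $p=1$, $q=2$ is actually row (a) with $n=3$, which happens to give the same $\fg^{\si^2}$ and the same $C$.

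A smaller point you should make explicit is the normalization. The paper builds $D$ from $\kapg$. With that form, the norms in Proposition \ref{HP1} are taken with respect to the dual of $\kapg$, under which the roots of $\fg$ have squared length $1/h^\vee_\fg$, not $2$. Your Step 1 (``the form induced from $\fg$'') and Table \ref{DcLie} tacitly switch to the form in which roots have squared length $2$. Consequently each tabulated $C$ is $2h^\vee_\fg$ times the constant for the Killing-form Dirac operator. The rescaling is uniform within each row, so it does not affect the row (g) discrepancy, but you should state which convention you use.
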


Now we assume that Lie superalgebra $L$ has a non-degenerate Killing form.  
Recall the map $J$ on $\Lz$ is defined as in Theorem \ref{main}. 
We define $J^*(\Omega_{\Lz})$ as follow. 
Let $\{X_i\}$ be a basis of $\Lz$ with respect to the Killing form, and let $\{X_i^*\}$ be its dual basis. 
Then by definition, we have $\Omega_{\Lz}=\sum X_i\otimes X_i^*$. 
Define 
\begin{equation}
J^*(\Omega_{\Lz})=\sum X_i\otimes J^{-1}(X_i^*).
\end{equation} 

Actually, $J^*(\Omega_{\Lz})$ is a linear  combination of the Casimir elements of each simple component of $\Lz$.
By Schur's Lemma, it therefore acts as a constant on each irreducible representation of $\Lo$. Furthermore, when $L$ is a type 1 Lie superalgebra, 
its odd part $\Lo$ decomposes into a direct sum of two irreducible representations of $\Lz$. 
Since these two representations are dual to each other, the constant of  $J^*(\Omega_{\Lz})$  is uniquely determined on $\Lo$. 
For simplicity, we denote this constant also by  $J^*(\Omega_{\Lz})$. Next, we compute this constant and prove Theorem \ref{mainformula}. 

When $L$ is of type 2, since the $\Lz$-representation on $\Lo$ is irreducible, then we can write $\Lz=\fg\oplus \fh$ and $\Lo=U\otimes V$. 
Here $U$ and $V$ are irreducible representations of simple ideals $\fg$ and $\fh$, respectively. 
For $X, Y\in \fg$, we have 
\begin{eqnarray*}
\kapl(X, Y)&=&\mathrm{str}(\ad X \ad Y)=\mathrm{tr}(\ad X\ad Y)|_{\Lz}-\mathrm{tr}(\ad X\ad Y)|_{\Lo}  \\
&=&(1-l_U\cdot \dim  V)\cdot \kappa_{\fg}(X, Y) .
\end{eqnarray*}
Similarly, for $X, Y\in \fh$, we have 
 \[
\kapl(X, Y)=(1-l_V\dim U)\cdot \kappa_\fh(X, Y).\]
Then 
\begin{equation}
\Omega_{\Lz}=\frac{1}{1-l_U\dim V}\Omega_{\fg}+\frac{1}{1-l_V\dim U}\Omega_{\fh}.
\label{Omega1}
\end{equation}
Here $\Omega_{\fg}$ (resp. $\Omega_{\fh})$ is the Casimir element of $\fg$ (resp. $\fh$) with respect to $\kapg$ (resp. $\kappa_{\fh}$). 

For $L=B(0, n)$, since its even part $\Lz=B_n$ is simple,   we regard $\fh$ as redundant and $V=\bc$ as trivial. 
Thus, for this case, (\ref{Omega1}) reduces to the following formula: 
\begin{equation}
\Omega_{\Lz}=\frac{1}{1-l_U }\Omega_{\fg}.
\label{Omega2}
\end{equation}

When $L$ is of type 1, similarly we write $\Lz=\fg\oplus \fh\oplus \bc$ with one-dimensional center and $\Lo=U\otimes V\oplus U^*\otimes V^*$. 
Choose a nonzero element $z\in \bc$ such that $\ad z=1$ on $U\otimes V$ and $-1$ on $U^*\otimes V^*$. 
Then $\kapl(z, z)=-\frac{1}{2\dim U\dim V}$. 
Hence we get that 
\begin{equation}
\Omega_{\Lz}=\frac{1}{1-2l_U\dim V}\Omega_{\fg}+\frac{1}{1-2l_V\dim U}\Omega_{\fh}-\frac{1}{2\dim U\dim V}z\otimes z.
\label{Omega3}
\end{equation}

Specially, if $L=A(m,0)$ or $C(n)$, we regard $\fh$ as redundant and $V=\bc$ as trivial. 
Similarly, for this case, (\ref{Omega2}) reduces to the following formula: 
\begin{equation}
\Omega_{\Lz}=\frac{1}{1-2l_U }\Omega_{\fg}-\frac{1}{2\dim U}z\otimes z.
\label{Omega4}
\end{equation}

\bigskip 

\noindent{Proof of Theorem \ref{mainformula}: }
We illustrate two examples to prove this theorem. 
Let $L=B(m,n)$, then $\Lz=B_m\oplus C_n$ and $\Lo\cong \bc^{2m+1}\otimes \bc^{2n}$, where $\bc^{2m+1}$ (resp. $ \bc^{2n}$) stands for the standard representation of $B_m$ (resp. $C_n$).
By (\ref{Omega1}), we have  
\begin{eqnarray*}
J^*(\Omega_{\Lz})&=&\frac{2m-1}{2m-2n-1} {\Omega_{\fs\fo}}|_{L_{\bar{1}}}-\frac{2n+2}{2n-2m+1}  {\Omega_{\fs\fp}}|_{L_{\bar{1}}} \\
&=& \frac{2m-1}{2m-2n-1}\cdot \frac{m}{2m-1}-\frac{2n+2}{2n-2m+1}\cdot \frac{2n+1}{4(n+1)} \\
&=& \frac{2m+2n+1}{2(2m-2n-1)}.
\end{eqnarray*}
Here the minus appears in $J^*(\Omega_{\Lz})$ because $J$ acts by $-1$ on $C_n$ of $\Lz$, and  $\left.{\Omega_{\fs\fo}}\right|_{L_{\bar{1}}}= \frac{m}{2m-1}$ and $\left.{\Omega_{\fs\fp}}\right|_{L_{\bar{1}}}= \frac{2n+1}{4(n+1)}$ are got by Table \ref{Index}.

Next we consider the case of $L=A(n-2, 0)$. Then $\Lz=A_{n-2}\oplus \bc$, and $\Lo=\bc^{n-1}\oplus (\bc^{n-1})^*$.  
By Table \ref{Index}, we have $l_{\bc^{n-1}}=\frac{1}{2n-2}$.  
Recall that \[
J=\left\{\begin{array}{ll}
\id, & \mbox{ on }A_{n-2}, \\
-(n-2)\cdot \id, & \mbox{ on } \bc. \end{array}\right.
\]
Then by (\ref{Omega4}), we get 
\begin{eqnarray*}
J^*(\Omega_{\Lz})&=&\frac{n-1}{n-2} {\Omega_{\fs\fl}}|_{L_{\bar{1}}}-\frac{1}{n-2}\cdot (-\frac{1}{2(n-1)})\\
&=&\frac{n-1}{n-2}\cdot \frac{n(n-2)}{2{(n-1)}^2}+\frac{1}{2(n-1)(n-2)} \\
&=& \frac{n-1}{2(n-2)}.
\end{eqnarray*} 

After similar computation, we list all the data about $C, J^*(\Omega_{\Lz}), h^\vee_{\mathrm{sign}}$ and $\dim \Lo$ for each case  in Table \ref{JOmega},. 
Then Theorem \ref{mainformula} is then established by direct verification. 
\begin{table}[ht]
\centering
\begin{tabular}{| c|c|c|c|c|c|c|}
\hline
  &$\mathfrak{g}$ & $L$& $\dim \Lo$   & $J^*(\Omega_{\Lz})$ & $C$ (for $D^2$) & $h^\vee$ (for $L$) \\
\hline
(a) &$A_{2n-1}$ & $C(n)$ & $4(n-1)$ & $\frac{n}{2(n-1)}$ &  $ 2n(n-1)$ & $n-1$ \\
\hline
(b) &$A_{2n-1}$  & $D(p,n-p)$ & $4p(n-p)$ & $\frac{n}{2(p-q-1)}$ &  $ 2npq $ & $p-q-1$ \\
\hline
(c) &$A_{2n}$   & $B(n-p,p)$ & $4p(n-p)+2p$ & $\frac{2n+1}{2(2q-2p-1)}$ & $ 2npq+2pq+p^2+\frac{p}{2}$& $q-p-\frac{1}{2}$  \\
\hline
(d) &$A_{2n}$   & $B(0,n)$ & $2n$ & $\frac{1}{2}$ & $\frac{n(2n+1)}{2}$& $\frac{2n+1}{2} $  \\
\hline
(e) &$D_n$   & $A(n-2,0)$ & $2(n-1)$ & $\frac{n-1}{2(n-2)}$ & $(n-1)^2$& $n-2$  \\
\hline
(f) &$E_6$   & $F(4)$ & $16$ & $1$ & $48$ & $3$  \\
\hline
\end{tabular}
\caption{$J^*(\Omega_{\Lz})$  for Lie superalgebras}
\label{JOmega}
\end{table}
\hfill\qedsymbol 


\section{Kostant constant and superadmissibility index}

In \cite{Kos1} and \cite{Kos}, Kostant studies $\Lz+\Lo$, where $\Lz$ is a Lie algebra which admits a non-singular symmetric bilinear form, 
and $\Lo$ has a non-singular $\Lz$-invariant alternating bilinear form. 
He defines a $\nu^*$-map  by $\nu^*: \Lz \hookrightarrow \fsp(\Lo)\cong S^2(\Lo) \hookrightarrow \mathcal{W}(\Lo)$.
He also gives a necessary and sufficient condition for $\Lz+ \Lo$ to become a Lie superalgebra. 

\begin{proposition}[{\cite[Thms.0.1, 2.11]{Kos}}]
Given a Lie algebra $\Lz$ and its representation $\Lo$, where $\Lz$ admits a non-singular symmetric bilinear form and $\Lo$ admits a non-singular $\Lz$-invariant alternating bilinear form, then $L=\Lo+ \Lz$ is a Lie superalgebra of Riemannian type if and only if $\nu^*$ maps the Casimir element of $\Lz$ to a constant in $\mathcal{W}(\Lo)$. 
Moreover, $\nu^*(\Omega_{\Lz})=\frac{1}{8} \mathrm{tr}\Omega_{\Lz}|_{\Lo}$ is also equal to the constant in $D_s^2$ of the Dirac operator for Lie superalgebra.  
\label{Kos}
\end{proposition}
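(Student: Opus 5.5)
The plan is to write everything in the Weyl algebra $\mathcal{W}(\Lo)$ through its symbol map to $S(\Lo)$, and to sort $\nu^*(\Omega_{\Lz})$ by degree. Fix a basis $\{X_i\}$ of $\Lz$ with dual basis $\{X_i^*\}$ for the given symmetric form $B$, and write $\omega$ for the alternating form on $\Lo$. The only odd bracket compatible with the two forms (Riemannian type) is the moment map
\[
[x,y]=\sum_i \omega(X_i x, y)\,X_i^* .
\]
Up to normalisation this is forced by invariance of the extended form: $B([x,y],X)=\omega(Xx,y)$. It is symmetric because $X$ acts skew-symmetrically with respect to $\omega$.

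The first step is to check the Jacobi identities that hold for free. The identity with two even elements and one odd holds because $\Lo$ is a representation. The identity with one even and two odd elements is the $\Lz$-equivariance of the moment map, which follows from invariance of $B$ and $\omega$. Hence $L$ is a Lie superalgebra if and only if the odd--odd--odd identity holds. Since the bracket is symmetric, polarization reduces this to the single cubic condition
\[
\Phi(x):=[[x,x],x]=\sum_i \omega(X_i x,x)\,X_i^* x=0 \quad\text{for all } x\in\Lo .
\]

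Next I compute $\nu^*(\Omega_{\Lz})=\sum_i \nu^*(X_i)\nu^*(X_i^*)$. In a symplectic basis, $\nu^*(X)$ is the symmetrised quadratic element $\tfrac12\sum_j \omega(Xx_j,x_j^\vee)$-type expression whose symbol is the quadratic function $x\mapsto \tfrac12\omega(Xx,x)$. The product of two quadratic elements in $\mathcal{W}(\Lo)$ has components only in degrees $4$, $2$ and $0$.
\begin{itemize}
\item The degree-$4$ symbol is proportional to $Q(x)=\sum_i\omega(X_i x,x)\,\omega(X_i^*x,x)=B([x,x],[x,x])$.
\item The degree-$2$ part is $\tfrac12\sum_i[\nu^*X_i,\nu^*X_i^*]$ up to a factor. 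It is the symbol of $\sum_i [X_i,X_i^*]$ acting through $\fsp(\Lo)$, which vanishes because the Casimir tensor is symmetric.
\item The degree-$0$ part comes from the double contraction in the Moyal product. It equals $\tfrac18\sum_i \mathrm{tr}(X_iX_i^*|_{\Lo})=\tfrac18\mathrm{tr}\,\Omega_{\Lz}|_{\Lo}$.
\end{itemize}
So $\nu^*(\Omega_{\Lz})$ is a constant exactly when $Q\equiv 0$, and in that case the constant is $\tfrac18\mathrm{tr}\,\Omega_{\Lz}|_{\Lo}$.

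The main obstacle is the equivalence $Q\equiv0\iff\Phi\equiv0$, since it links the quartic symbol to the cubic Jacobi defect. For the direction $\Phi\equiv 0\Rightarrow Q\equiv 0$, I would use invariance:
\[
Q(x)=B([x,x],[x,x])=2\,\omega\bigl([x,x]\cdot x,\,x\bigr)=2\,\omega(\Phi(x),x)
\]
up to a sign and constant. For the converse, I would differentiate. Invariance and symmetry of the moment map give
\[
dQ_x(y)=4\,B([x,x],[x,y])=4\,\omega(\Phi(x),y)
\]
up to a sign. Since $\omega$ is non-degenerate, $Q\equiv0$ forces $\Phi\equiv0$. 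The constant factors have to be tracked carefully here, but they do not affect the equivalence.

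Finally, for the identification with the Dirac constant, I expand $D_s^2$ for $D_s=\sum_i(x_i\otimes\partial_i-\partial_i\otimes x_i)$ as in Proposition~\ref{HP2}. The $\mathcal{U}(L)$-part produces $-\Omega_L\otimes1$ and the diagonal term $\Omega_{\Lz\Delta}$. The purely Weyl-algebra remainder is exactly $\nu^*(\Omega_{\Lz})$, which is the same reordering computation as above. Hence $C_s=\nu^*(\Omega_{\Lz})=\tfrac18\mathrm{tr}\,\Omega_{\Lz}|_{\Lo}$.
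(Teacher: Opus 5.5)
\emph{Equivalence part: correct.} The paper gives no proof of this proposition; it is quoted from Kostant. Your outline is the standard Weyl-algebra argument behind that citation, and for the equivalence it is correct.
\begin{itemize}
\item The moment-map bracket is forced by invariance of $B\oplus\omega$. You rightly use that $B$ is $\Lz$-invariant, which the statement as quoted leaves implicit.
\item The only nontrivial Jacobi identity is $[[x,x],x]=0$.
\item The degree-$2$ part of $\nu^*(\Omega_{\Lz})$ is $\tfrac12\nu^*\bigl(\sum_i[X_i,X_i^*]\bigr)=0$.
\item The quartic part is a nonzero multiple of $Q(x)=B([x,x],[x,x])$.
\end{itemize}
With your normalisation $B([x,y],X)=\omega(Xx,y)$, the identities $Q(x)=\omega(\Phi(x),x)$ and $dQ_x(y)=4\,\omega(\Phi(x),y)$ hold exactly, with no extra signs. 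The second one gives $Q\equiv0\Leftrightarrow\Phi\equiv0$ cleanly.

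For the Dirac constant, no reordering is needed. Project onto $\bc1\otimes\mathcal W(\Lo)$ along $\mathcal U(L)=\bc1\oplus L\,\mathcal U(L)$. Then $D_s^2$ and $\Omega_L\otimes1$ project to $0$, and $\Omega_{\Lz\Delta}$ projects to $1\otimes\nu^*(\Omega_{\Lz})$. So Proposition~\ref{HP2} forces $C_s=\nu^*(\Omega_{\Lz})$.

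\emph{The scalar part: sign not derived, and it comes out opposite.} You never carry out the double contraction; you copied the sign from the statement. Write the symbol of $\nu^*(X)$ as $\tfrac12 S^X_{\mu\nu}\xi^\mu\xi^\nu$, with $\xi^\mu\xi^\nu-\xi^\nu\xi^\mu=\pi^{\mu\nu}$ in $\mathcal W(\Lo)$. Then $\{s_X,\xi^\lambda\}=\pi^{\mu\lambda}S^X_{\mu\nu}\xi^\nu$, and since $\pi$ is skew, the second-order Moyal term is $\tfrac18\pi^{\mu\nu}\pi^{\alpha\beta}S^X_{\mu\alpha}S^Y_{\nu\beta}=-\tfrac18\,\mathrm{tr}_{\Lo}(XY)$. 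So the scalar part of $\nu^*(\Omega_{\Lz})$ is $-\tfrac18\,\mathrm{tr}\,\Omega_{\Lz}|_{\Lo}$.

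No normalisation of $\mathcal W(\Lo)$ can change this sign. Replacing $uv-vu=\omega(u,v)$ by $c\,\omega(u,v)$ multiplies $\nu^*$ by $c^{-1}$ and the double contraction by $c^2$, so the product is unchanged.

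Here is a concrete check with $\mathfrak{osp}(1|2)$ and $pq-qp=1$:
\begin{itemize}
\item $\nu^*(h)=-\tfrac12(pq+qp)$, $\nu^*(e)=\tfrac12p^2$, $\nu^*(f)=-\tfrac12q^2$.
\item With the trace form, $\Omega=\tfrac12h^2+ef+fe$, and $\mathrm{tr}\,\Omega|_{\bc^2}=3$.
\item Let $p=d/dt$ and $q=t$ act on $1\in\bc[t]$. This gives $\nu^*(\Omega)=\tfrac18-\tfrac12=-\tfrac38$.
\end{itemize}

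So the last clause holds only in the form $\nu^*(\Omega_{\Lz})=\tfrac18\,\mathrm{str}\,\Omega_{\Lz}|_{\Lo}=-\tfrac18\,\mathrm{tr}\,\Omega_{\Lz}|_{\Lo}$. With an ordinary trace and a plus sign, as printed, it is false. Your write-up should record the sign your computation actually produces rather than assert $+\tfrac18$.
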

For convenience, we denote $\KKs=\left. \Omega_{\Lz}\right|_{\Lo}$ and call it the \emph{Kostant constant}.
In this section, we prove Theorem \ref{main12}. 

\subsection{Kostant constant}
  
When $L$ is of type 2,  we  write $\Lz=\fg\oplus \fh$ and $\Lo=U\otimes V$.  Let $C_U$ and $C_V$  represent the Casimir constants of $\fg$-representation $U$ and $\fh$-representation $V$, respectively.
Then by (\ref{Omega1}),  we have 
\begin{equation}
 \KKs=\frac{C_U}{1-l_U\dim V}+\frac{C_V}{1-l_V\dim U}.
 \label{SCas1}
 \end{equation} 
 
Specially, for $L=B(0, n)$,  we regard $\fh$ to be redundant and (\ref{SCas1}) turns to be the following degenerate form  
\[
\KKs=\frac{C_U}{1-l_U}.
\] 

When $L$ is of type 1,  we write $\Lz=\fg\oplus \fh\oplus \bc$  and $\Lo=U\otimes V\oplus U^*\otimes V^*$. 
Then by (\ref{Omega3}), we have 
\begin{equation}
\KKs= \frac{C_U}{1-2l_U\dim V}+\frac{C_V}{1-2l_V\dim U}-\frac{1}{2\dim U \dim V}.
 \label{SCas2}
 \end{equation} 
Specially, when $L=A(m, 0)$ or $C(n)$, we also regard $\fh$ to be redundant, and (\ref{SCas2}) turns to be  
 \[
 \KKs =\frac{C_U}{1-2l_U}-\frac{1}{2\dim U  }. \]
Using the information about Dynkin index and Casimir constant in Table \ref{Index}, we compute (\ref{SCas1}) and (\ref{SCas2})  as well as their degenerate forms  
separately, and then get the following Table \ref{SCas}. 

\begin{table}[H]
\centering 
\renewcommand{\arraystretch}{1.4}
\begin{tabular}{cccc}
\hline 
Type & $\Lz$ &  $\Lo$ &  Constant $\KKs$ \\
\hline
$A(m-1,n-1) $ & $A_{m-1}\oplus A_{n-1}\oplus \mathbb{C}$ & $ \bc^m\otimes (\bc^n)^*$ &  $\frac{\frac{(m^2-1)}{2m^2}}{1-2\cdot\frac{1}{2m}\cdot n}+\frac{\frac{n^2-1}{2 n^2}}{1-2\cdot\frac{1}{2n}\cdot m}-\frac{1}{2mn}$  \\ [0.5em] 
$(m\neq n)$ & & $\oplus (\bc^m)^*\otimes \bc^n$  & \\
\hline
$B(m,n) $ & $B_m+C_n$ & $\bc^{2m+1}\otimes \bc^{2n}$ & $\frac{\frac{m}{2m-1}}{1-\frac{1}{2m-1}\cdot 2n}+\frac{\frac{2n+1}{4(n+1)}}{1-\frac{1}{2(n+1)}\cdot(2m+1)}$ \\ [0.5em]
\hline
$B(0,n) $ & $C_n$ & $ \bc^{2n}$ & $\frac{\frac{2n+1}{4(n+1)}}{1-\frac{1}{2(n+1)}}$ \\ [0.5em]
\hline
$C(n) $ & $C_{n-1}\oplus \mathbb{C}$ & $ \bc^{2n-2}\oplus (\bc^{2n-2})^*$ & $\frac{\frac{2n-1}{4n}}{1-2\cdot\frac{1}{2n}}-\frac{1}{4(n-1)}$ \\ [0.5em]
\hline
$D(m,n) $ & $D_m+C_n$ & $ \bc^{2m}\otimes {\bc}^{2n}$ & $\frac{\frac{2m-1}{4(m-1)}}{1-\frac{1}{2(m-1)}\cdot (2n)}+\frac{\frac{2n+1}{4(n+1)}}{1-\frac{1}{2(n+1)}\cdot(2m)}$ \\ [0.5em]
$(m\neq n+1)$ & & & \\
\hline
$F(4)$ & $A_1\oplus B_3 $ & $\bc^2\otimes V_{\omega_3}$ & $\frac{\frac{3}{8}}{1-\frac{1}{4}\cdot 8}+\frac{\frac{21}{40}}{1-\frac{1}{5}\cdot 2}$ \\
\hline
 $G(3)$ & $A_1 \oplus G_2$ & $\bc^2\otimes W_{\omega_1}$ & $\frac{\frac{3}{8}}{1-\frac{1}{4}\cdot 7}+\frac{\frac{1}{2}}{1-\frac{1}{4}\cdot 2}$ \\[0.5em]
 \hline
\end{tabular}
\caption{Kostant constant for Lie superalgebras}
\label{SCas}
\end{table} 

\bigskip 

\noindent{Proof of Theorem \ref{main12}}: 
By the last column of Table \ref{SCas}, we can verify the constant $\KKs=\frac{1}{2}$ directly. Hence this proves Theorem \ref{main12}. 
\hfill\qedsymbol
 
\bigskip 

\noindent{Proof of Corollary \ref{KCos}}: 
By combining Proposition \ref{Kos} and Theorem \ref{main12}, we have that  the constant in $D_s^2$ satisfies 
\[
C_s=\nu^*(\Omega_{\Lz})=\frac{1}{8}\cdot \KKs\cdot \dim \Lo=\frac{1}{16} \dim \Lo.
\]  
This proves this corollary. 
\hfill\qedsymbol

\bigskip 

\subsection{Superadmissibility index of type $I\!I$}

Let $\mathfrak{g}$ and $\mathfrak{h}$ be complex simple Lie algebras, with irreducible representations $U$ and $V$, respectively. 
By (\ref{SCas1}), it is reasonable to define    
\begin{equation}
\Est(\fg, U; \fh, V) = \frac{C_U}{1 - l_U \dim V} + \frac{C_V}{1 - l_V \dim U}.
\label{EEt}
\end{equation}
We call this \emph{superadmissibility index of type $I\!I$} for $(\fg, U)$ and $(\fh, V)$.
By Theorem \ref{main2}, we know that  $(\fg, U; \fh, V)$ in (\ref{Ehalf}) satisfies the equation $\Est(\fg, U; \fh, V) =\frac{1}{2}$. 
\begin{equation}
\left\{ \begin{array}{ll} 
\mathrm{(1)}  &  (\fs\fo(2m+1), \bc^{2m+1}; \fs\fp(2n), \bc^{2n}), \\
\mathrm{(2)}  &  (\fs\fo(2m), \bc^{2m}; \fs\fp(2n), \bc^{2n}),  ~ (m\neq n+1)\\
\mathrm{(3)}  &  (\fs\fl(2), \bc^{2}; G_2, V_{\omega_1}), \\
\mathrm{(4)}  &  (\fs\fl(2), \bc^2; \fs\fo(7), V_{\omega_3}).
\end{array}
\right.
\label{Ehalf}
\end{equation}

Next we want to prove the following result. 
 \begin{theorem}
  (\ref{Ehalf}) exhausts all the pairs with $\Est=\frac12$.  
  \label{Es2}
 \end{theorem}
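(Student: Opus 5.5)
The plan is to reduce the equation $\Est(\fg,U;\fh,V)=\frac12$ to a finite search. I would first use the identity $C_U\dim U = l_U\dim\fg$ (from the trace definition of the Dynkin index) to rewrite each term as a function of the Dynkin indices and dimensions only. Writing $a=l_U\dim V$ and $b=l_V\dim U$, the equation becomes
\[
\frac{C_U}{1-a}+\frac{C_V}{1-b}=\frac12 .
\]

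\textbf{Step 1: sign constraints.} Both $C_U$ and $C_V$ are positive for nontrivial $U,V$. Two observations follow.
\begin{itemize}
\item If both denominators were positive, each term would satisfy $\frac{C_U}{1-a}\geq C_U$. One then checks from the table (and Casimir bounds in general) that $C_U+C_V\geq\frac12$. Equality would have to be excluded, or hold only with a degenerate factor.
\item If both denominators were negative, the sum would be negative.
\end{itemize}
So exactly one denominator is negative, say $1-a<0<1-b$. The identity then reads $\frac{C_V}{1-b}=\frac12+\frac{C_U}{a-1}>\frac12$. This forces $1-b<2C_V$, hence $b>1-2C_V$.

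\textbf{Step 2: reduce to small indices.} Since $1-b>0$, we have $l_V\dim U<1$, so in particular $l_V<1$. Thus $V$ appears in Table~\ref{Index}. I would also argue that $l_U<1$. If $l_U\geq1$, then $\dim U$ is at least $\dim\fg$. Combined with $b=l_V\dim U<1$, this makes $l_V$ tiny, and for each candidate $(\fh,V)$ in Table~\ref{Index} the constraint $l_V\dim U<1$ gives an explicit small bound on $\dim U$. That bound contradicts $\dim U\geq\dim\fg$ except in a finite set of cases, which are checked directly. Hence both $(\fg,U)$ and $(\fh,V)$ come from Table~\ref{Index}, together with the bound $\dim U<1/l_V$.

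\textbf{Step 3: case analysis over pairs of table rows.} For each $(\fh,V)$ row with $l_V<1$, I would list the admissible $(\fg,U)$ with $\dim U<1/l_V$, and then solve $\Est=\frac12$, which is a rational equation in the rank parameters.
\begin{itemize}
\item For the infinite families (standard representations of $\fs\fl$, $\fs\fo$, $\fs\fp$), $\Est$ is a rational function of two integer parameters. Clearing denominators gives a polynomial identity. I expect it to vanish identically exactly for the $\fs\fo\oplus\fs\fp$ standard pairings, items (1) and (2) in (\ref{Ehalf}). For the $\Lambda^2$, $S^2$, and $\fs\fl\oplus\fs\fl$ pairings, it should reduce to a polynomial with no admissible integer roots.
\item The exceptional rows (spin, $E_6$, $E_7$, $F_4$, $G_2$, $\Lambda^3$) have bounded dimension. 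The constraint $\dim U<1/l_V$ with the other factor leaves finitely many pairs. Direct evaluation should leave only $(\fs\fl_2,\bc^2;G_2,V_{\omega_1})$ and $(\fs\fl_2,\bc^2;\fs\fo_7,V_{\omega_3})$.
\end{itemize}

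\textbf{Main obstacle.} I expect the hardest part to be the bookkeeping in Step 3. The difficulty is that families with $\dim U$ growing, like $\fs\fl_2$ paired against large standard representations, must be handled uniformly rather than row by row. I would first try to prove, for each family pairing, that the cleared-denominator polynomial has a fixed sign outside a small explicit range, and then check the remaining small ranks by hand. The excluded case $m=n+1$ in item (2) corresponds to the zero Killing form, where $1-a$ or $1-b$ vanishes, and must be removed separately.
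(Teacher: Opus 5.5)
Your Step 1 is sound: every nontrivial Casimir eigenvalue is at least $\frac38$, so two positive denominators would force $\Est>\frac34$. Step 2, however, has a genuine gap.

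First, its conclusion $l_U<1$ is false. In your orientation $1-a<0<1-b$, the solutions $(\fs\fo(3),\bc^3;\fs\fp(2n),\bc^{2n})$, i.e.\ $(A_1,\mathrm{adj};C_n,\bc^{2n})$ including $n=1$, have $l_U=1$. These are item (1) of (\ref{Ehalf}) with $m=1$. They form an infinite family, so they cannot sit in your ``finite set of cases checked directly'', and discarding them loses part of the answer.

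Second, your mechanism does not exclude $l_U>1$ either. The bound $\dim U<1/l_V$ is small only for the exceptional rows of Table~\ref{Index}. For the standard representations of $\fs\fl_m$, $\fs\fp(2m)$ and $\fs\fo(p)$ it equals $2m$, $2m+2$ and $p-2$, which grow with the rank of $\fh$ and are compatible with $\dim U\ge\dim\fg$. For example, take $(\fs\fl_2,S^3\bc^2;\fs\fl_m,\bc^m)$ with $m\ge 3$. Here $l_U=\frac52$, $\dim U=4$, $a=\frac{5m}{2}>1$, $b=\frac2m<1$, and also $b>1-2C_V=\frac1{m^2}$. So it survives every constraint you derive, for infinitely many $m$. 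It is not a solution, but only the equation itself shows this.

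What is missing is a quantitative use of the equation in the regime $l_U\ge 1$; this is the content of the paper's Proposition~\ref{prop1}.

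If $l_U>1$, Lemma~\ref{lem} gives $\dim U>\dim\fg$, so $\delta(U)<1$ and $r(U)<1$. The negative term then has absolute value $\delta(U)/(\dim V-r(U))<1/(\dim V-1)$. Meanwhile $l_V<1/\dim U\le\frac14$ leaves only the standard representations of classical algebras and $\mathrm{spin}_7$ for $V$. For each of these, the excess $\frac{C_V}{1-b}-\frac12$ is shown to exceed $\frac{1}{\dim V-1}$, which is a contradiction. For $\bc^m$ of $\fs\fl_m$, for instance, the excess is $\frac{m\dim U-2}{2m(2m-\dim U)}$.

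The case $l_U=1$ ($U$ adjoint, $C_U=1$, $\dim U=\dim\fg$) must be handled separately. Substitute into the equation and solve row by row for $\dim\fg$; this yields only $\dim\fg=3$, hence exactly the family above. Only after that can your Step 3 legitimately be restricted to pairs of rows of Table~\ref{Index}. The $(A_1,\mathrm{adj};C_n,\bc^{2n})$ family must then be added back, or absorbed as the case $m=1$ of item (1).
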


Recall that the irreducible representations of the simple Lie algebras with 
$0 < l_{U} < 1$   are listed in Table \ref{Index}.
   
Let $(\mathfrak{g}, U)$ be a non-trivial irreducible representation. Denote 
\[
\delta(U) = \frac{\dim\mathfrak{g}}{\dim U},\qquad r(U) = \frac{1}{l_U}.
\] 
By the relation between the Casimir eigenvalue and the Dynkin index, we have  
\[
C_U= \frac{l_U {\dim\mathfrak{g}}}{\dim U},
\]
and  
\[
\frac{C_U }{1-l_U\dim V} = \frac{l_U {\dim\mathfrak{g}}}{\dim U \cdot(1-l_U\dim V)}=\frac{ \delta(U)}{r(U)- \dim V}.
\]
 
Consequently, when both $U$ and $V$ are non-trivial, the equation $\Est = \frac{1}{2}$ is equivalent to
 \begin{equation}
 \frac{\delta(U)}{r(U)-\dim V} + \frac{ \delta(V)}{r(V)-\dim U} = \frac{1}{2}. 
 \label{EEE}
 \end{equation}
 
 The following lemma is well-known and useful. 
\begin{lemma}[\cite{AVE}]
Let $(\mathfrak{g}, W)$ be a non-trivial irreducible representation of a complex simple Lie algebra. If $l_W > 1$, then
  \[
\dim W > \dim\mathfrak{g}.
\] 
\label{lem}
\end{lemma}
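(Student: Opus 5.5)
We argue by contraposition: assuming $\dim W \le \dim\fg$, we show $l_W \le 1$. The link between the two quantities is the identity $C_W = l_W \dim\fg/\dim W$ used earlier in this section. Taking the trace of $\Omega_\fg$ on $W$ gives
\[
C_W\dim W = l_W\dim\fg .
\]
Hence $l_W>1$ together with $\dim W\le \dim\fg$ would force $C_W>1 = C_\fg$, where $C_\fg$ is the Casimir constant of the adjoint representation with highest root $\theta$. So it suffices to show the following: every irreducible $W=V_\lambda$ with $\dim V_\lambda\le\dim\fg$ has $(\lambda,\lambda+2\rho)\le(\theta,\theta+2\rho)=1$.

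The first step is a monotonicity property of the Weyl dimension formula
\[
\dim V_\lambda=\prod_{\alpha>0}\frac{(\lambda+\rho,\alpha)}{(\rho,\alpha)} .
\]
Each factor is nondecreasing in every coordinate $m_i$ of $\lambda=\sum m_i\omega_i$, and at least one factor strictly increases, since the factor for $\alpha_i$ itself does. Consequently $\dim V_{\lambda+\omega_i}>\dim V_\lambda$. It follows that the set of dominant $\lambda$ with $\dim V_\lambda\le\dim\fg$ is a down-closed finite set in the coordinatewise order. It can therefore be enumerated, type by type, by starting from the fundamental weights and the weights $2\omega_i$ and $\omega_i+\omega_j$, computing their dimensions with the formula, and discarding everything lying above a weight whose dimension already exceeds $\dim\fg$.

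The second step is to check the resulting list. We expect it to consist of the trivial representation, the adjoint representation, and exactly the representations in Table \ref{Index} with dimension at most $\dim\fg$. Examples are the standard, $\Lambda^2$ and $S^2$ representations of $\fs\fl_n$, the vector and small spin representations of the orthogonal algebras, $\Lambda^2_0$ of $\fsp_n$, and the minimal representations of $E_6$, $E_7$, $F_4$ and $G_2$. For each item on the list, $l_W$ is read off from Table \ref{Index}, or equals $1$ for the adjoint, and is at most $1$. Equivalently, one can verify $C_W\le 1$ directly from the formula $C_U=\frac{1}{2h^\vee}(\langle\lambda,\lambda\rangle+2\langle\lambda,\rho\rangle)$.

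The main obstacle is making the enumeration in the first step complete for the classical series uniformly in the rank. The small cases are finite checks. For large rank we would bound $\dim V_\lambda$ from below: any $\lambda$ not on the list dominates, coordinatewise, one of a few minimal "forbidden" weights, such as $\omega_3$ or $\omega_1+\omega_2$ in type $A$. Their dimensions are polynomials of degree at least $3$ in the rank, so they exceed $\dim\fg$, which is quadratic in the rank, once the rank is large. The finitely many exceptional small ranks, for instance the spin representations in Table \ref{Index}, would be handled by direct computation.
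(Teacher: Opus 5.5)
The paper does not prove this lemma; it only quotes it from \cite{AVE}. Your plan is a legitimate self-contained alternative: pass to the contrapositive, use that each factor of Weyl's dimension formula is nondecreasing in every $m_i$ (strictly for the factor at $\alpha_i$), conclude that $\{\lambda:\dim V_\lambda\le\dim\fg\}$ is finite and down-closed, enumerate it, and check $l_W\le 1$ on it. The monotonicity step is correct.

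However, the Casimir reformulation in your first paragraph is false and must be dropped. The implication ``$l_W>1$ and $\dim W\le\dim\fg$ force $C_W>1$'' is fine. The trouble is the statement you reduce to, that every $V_\lambda$ with $\dim V_\lambda\le\dim\fg$ has $C_{V_\lambda}\le 1$. It fails:
the representation $S^2\bc^n$ of $\mathfrak{sl}_n$ has dimension $\frac{n(n+1)}{2}\le n^2-1$ but $C=\frac{(n-1)(n+2)}{n^2}>1$ for every $n\ge 3$ (already $\frac{10}{9}$ for $n=3$);
and $\Lambda^3\bc^8$ of $\mathfrak{sl}_8$ has dimension $56\le 63$ but $C=\frac{135}{128}$.
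Both values appear in the last column of Table~\ref{Index}. So ``equivalently, one can verify $C_W\le 1$'' is wrong. By $C_W\dim W=l_W\dim\fg$, the correct Casimir form of the criterion is $C_W\le\dim\fg/\dim W$, which is just $l_W\le 1$ again. The comparison with $C_\fg=1$ buys nothing. Check $l_W\le 1$ directly on each item, computing $l_W=C_W\dim W/\dim\fg$ from the highest weight where needed; your second step in fact already does this.

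With that repair, the remaining issue is that the whole content of the lemma lies in the enumeration, which you only announce (``we expect''). To make it a proof you must actually find, for each classical series, the minimal dominant weights outside the list and bound their dimensions from below. These are not `a few' weights uniformly in the rank. Every $\omega_k$ with $3\le k\le n-3$ is minimal in type $A_{n-1}$, and similarly for the exterior-power weights in types $B$, $C$, $D$. So you need estimates such as $\binom{n}{k}\ge\binom{n}{3}$ rather than a fixed finite list. You also need exponential growth for the spin weights, and hand checks in low rank: $\Lambda^3\bc^n$ for $n=6,7,8$; the spin representations of $B_3$--$B_6$ and $D_4$--$D_7$; $\Lambda^3_0\bc^6$ for $C_3$; and coincidences such as $\omega_1+\omega_2$ being the adjoint of $A_2$. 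Compared with simply citing \cite{AVE}, your route is elementary and self-contained, but entirely case-by-case.
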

 
 
\begin{proposition}
When $\Est=1/2$, then $l_U\leq 1$ and $l_V\le 1$. Moreover, $l_U=1$ or $l_V=1$ occurs only in the case $(\fg,U; \fh, V)=(A_1,\operatorname{adj}; A_1,\bc^2)$ or $(A_1,\operatorname{adj}; C_n,\bc^{2n})$. 
\label{prop1}
\end{proposition}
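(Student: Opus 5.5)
The plan is to work with the reformulation (\ref{EEE}): $\delta(U)=\dim\fg/\dim U$, $r(U)=1/l_U$, and $C_U=l_U\dim\fg/\dim U$, so that
\[
\Est=\frac{C_U}{1-l_U\dim V}+\frac{C_V}{1-l_V\dim U}.
\]
Throughout we assume $U$ and $V$ are non-trivial; the degenerate form is handled the same way with one term. By symmetry it suffices to suppose $l_U\ge 1$ and derive a contradiction unless we are in the two listed cases.

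\emph{Case $l_U>1$.} Then $l_U\dim V>2$, so the first term is negative. By Lemma \ref{lem} we have $\dim U>\dim\fg$, hence $\delta(U)<1$. The equation then forces
\[
\frac{C_V}{1-l_V\dim U}>\frac12 .
\]
In particular $1-l_V\dim U>0$, so $l_V<1/\dim U<1/\dim\fg\le 1/3$, and $V$ appears in Table \ref{Index}. The key step is a size comparison. From $C_V=l_V\dim\fh/\dim V$ we get
\[
\frac{C_V}{1-l_V\dim U}=\frac{\dim\fh}{\dim V\,(r(V)-\dim U)},
\]
and this must exceed $\tfrac12$, so $r(V)-\dim U<2\dim\fh/\dim V=2\delta(V)$. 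At the same time $r(V)>\dim U>\dim\fg$. For each family in Table \ref{Index} I will compare $r(V)$ with $\delta(V)$; for example the standard module of $\fs\fl_n$ has $r=2n$ and $\delta\approx n$. This confines $\dim U$ to the window $(r(V)-2\delta(V),\,r(V))$. I then plan to show that no representation $U$ with $l_U>1$ of a simple $\fg$ can have $\dim U$ in this window while also making the negative first term small enough, i.e.\ while satisfying
\[
\frac{C_U}{l_U\dim V-1}=\frac{C_V}{1-l_V\dim U}-\frac12 .
\]
The tools are the lower bound $C_U\ge C_{\rm adj}=1$ for $l_U>1$ (since $C_U=l_U\delta(U)$ and Casimir values grow with the highest weight) and the explicit values in Table \ref{Index}. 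This inequality bookkeeping over the families of Table \ref{Index} (including the sporadic spin and $\Lambda^3$ entries) is the main obstacle. I expect it to reduce to finitely many small-rank checks, done by hand.

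\emph{Case $l_U=1$.} Then $U$ is the adjoint representation, $\dim U=\dim\fg$ and $C_U=1$, so the first term equals $-1/(\dim V-1)$. The equation becomes
\[
\frac{C_V}{1-l_V\dim\fg}=\frac12+\frac1{\dim V-1}.
\]
Again $V$ must be in Table \ref{Index}. Running through the table with the bound $l_V<1/\dim\fg$ leaves only $\fg=A_1$ ($\dim\fg=3$), because larger $\fg$ makes $l_V\dim\fg\ge1$ for every entry. For $\fg=A_1$ the condition reads $C_V/(1-3l_V)=\tfrac12+1/(\dim V-1)$. Checking the table entries, the standard modules of $A_1$ and $C_n$ satisfy it: for instance $(A_1,\bc^2)$ gives $\tfrac{3/8}{1/4}=\tfrac32=\tfrac12+1$, and the $C_n$ case is verified from $l=\tfrac1{2n+2}$, $C=\tfrac{2n+1}{4(n+1)}$. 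All other entries fail, either by direct substitution or because $3l_V\ge 1$. This yields exactly $(A_1,\operatorname{adj};A_1,\bc^2)$ and $(A_1,\operatorname{adj};C_n,\bc^{2n})$.
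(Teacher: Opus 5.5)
Your proposal has two genuine gaps: the case $l_U>1$ is never actually proved, and the reduction to $\fg=A_1$ in the case $l_U=1$ rests on a false claim.

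\textbf{The case $l_U>1$.} You stop exactly where the argument has to be made. You call the bookkeeping ``the main obstacle'' and say you expect it to reduce to finitely many checks, but the tools you propose would not close it.

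The window $\dim U\in(r(V)-2\delta(V),\,r(V))$ is essentially vacuous. For $V=\bc^m$ of $\fs\fl_m$ it is $(2/m,2m)$, and for $V=\bc^{2m}$ of $\fsp_{2m}$ it is $(1,2m+2)$, so it only restates $\dim U<r(V)$.

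The lower bound $C_U\ge 1$ is justified only heuristically. Dominant weights that are not above $\theta$, for example those in a nonzero coset of the root lattice, are not covered by ``Casimir values grow with the highest weight.'' More importantly, even if true, it bounds the absolute value of the negative term from \emph{below}. The contradiction needs an \emph{upper} bound on it.

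The missing step is the following. Write the first term as $-\delta(U)/(\dim V-r(U))$. Using $\delta(U)<1$ (Lemma \ref{lem}) and $r(U)=1/l_U<1$, its absolute value is $<1/(\dim V-1)$. Since $\dim U>\dim\fg\ge 3$ gives $\dim U\ge 4$, you get $l_V<1/4$. So only the standard modules of $\fs\fl_m$, $\fsp_{2m}$, $\fs\fo_p$ and $\mathrm{spin}_7$ remain. In each of these one checks that $\delta(V)/(r(V)-\dim U)-\tfrac12>1/(\dim V-1)$. For instance, for $\bc^m$ the left side is $(\dim U-2/m)/(2(2m-\dim U))$, and the inequality reduces to $m(\dim U-4)+\dim U-2+2/m>0$. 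This is how the paper argues.

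\textbf{The case $l_U=1$.} You claim that $l_V\dim\fg\ge1$ for every entry of Table \ref{Index} once $\dim\fg\ge 8$. This is false. The standard modules have $l_V=1/(2n)$, $1/(2n+2)$ and $1/(p-2)$, which tend to $0$. For example, $\fg=A_2$ with $V=\bc^n$ of $\fs\fl_n$, $n\ge5$, gives $l_V\dim\fg=4/n<1$.

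For these three families you must actually solve the equation, which is linear in $\dim\fg$. It gives $\dim\fg=4-2/n$, $\dim\fg=3$ and $\dim\fg=(p-3)/(p+1)$ respectively. So only $\dim\fg=3$ survives, with $n=2$ in the first family.

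Only for the remaining entries, all of index at least $1/5$ (identifying $\Lambda^2\fs\fl_3$ with $(\bc^3)^*$), does the bound $l_V<1/\dim\fg$ by itself force $\dim\fg=3$. Your final verification for $\fg=A_1$, yielding $(A_1,\operatorname{adj};A_1,\bc^2)$ and $(A_1,\operatorname{adj};C_n,\bc^{2n})$, is correct.
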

 
\begin{proof}
By symmetry, it suffices to rule out $l_U > 1$.  
In this case, the first term of (\ref{EEt}) is negative. Since $\Est= 1/2 > 0$, then the second term of (\ref{EEt}) must be positive, hence 
\[
 r(V)-N> 0 \quad\Longrightarrow\quad r(V) =\frac{1}{l_V}> N \geq 4.
\] 
Thus $V$ must appear in Table \ref{Index} with $0 < l_V < \frac{1}{4}$. Then we prove this theorem case by case. 
 
\underline{\noindent{Case 1:} $\fh=A_{m-1}, V=\bc^m$. } Then
\[
\dim V = m,\qquad r(V) =2m,\qquad \delta(V) = \frac{m^2-1}{m}.
\]
Since $r(V) > N$, we have $2m > N$. Then (\ref{EEE}) becomes 
\[
 \frac{\delta(U)}{r(U)-m} + \frac{ \frac{m^2-1}{m}}{2m-N}  = \frac{1}{2}.
\]
This implies 
\begin{equation}
 \frac{\delta(U)}{m-r(U)} =  \frac{ N-\frac{2}{m}}{2(2m-N)}.
\label{eqA1}
\end{equation}
Since $\delta(U) < 1$ and $r(U) < 1$, the left-hand side of (\ref{eqA1}) satisfies
\[
 \frac{\delta(U)}{m-r(U)}  < \frac{1}{m-1}.
\]
We claim that the right-hand side of (\ref{eqA1}) satisfies
\[
\frac{ N-\frac{2}{m}}{2(2m-N)}  > \frac{1}{m-1},
\]
which is equivalent to 
\[
(m-1)(N-\frac{2}{m})  > 2(2m-N).
\]
Since 
\[
(m-1)(N-\frac{2}{m}) -2(2m-N)=m(N-4)+N-2+\frac{2}{m}> 0,
\]
then $\frac{ N-\frac{2}{m}}{2(2m-N)}  > \frac{1}{m-1}$ as claimed. 
This contradicts (\ref{eqA1}).
 
\underline{\noindent\textbf{Case 2:} $\fh=C_m, V = \bc^{2m}$.} We have 
\[
\dim V = 2m,\qquad r(V) = 2m+2,\qquad \delta(V) = \frac{2m+1}{2}.
\]
Then (\ref{EEE}) becomes
\[
 \frac{\delta(U)}{r(U)-2m} + \frac{{2m+1}}{2(2m+2-N)}  = \frac{1}{2}.
\]
This implies 
\[
 \frac{N-1}{2(2m+2-N)}= \frac{\delta(U)}{2m-r(U)} < \frac{1}{2m-1}.
\]
But $ \frac{N-1}{2(2m+2-N)} > \frac{1}{2m-1}$, 
Which is equivalent to $2(N-3)m + N-3 > 0$.
Then this is a contradiction.
 
\underline{\noindent\textbf{Case 3:} $\fh=\fs\fo(p, \bc), V = \bc^{p}$.} In this case, we have  
\[
\dim V = p,\qquad r(V) = p-2,\qquad \delta(V) = \frac{p-1}{2}.
\]
Equation (\ref{EEE}) becomes
\[
 \frac{\delta(U)}{r(U)-p} + \frac{{p-1}}{2(p-2-N)}  = \frac{1}{2}.
\]
This implies  \[
\frac{N+1}{2(p-2-N)}= \frac{\delta(U)}{p-r(U)}  < \frac{1}{p-1}.
\]
Also since $\frac{N+1}{2(p-2-N)} > \frac{1}{p-1}$,
which is equivalent to $(N-1)p + N +3 > 0$, 
then this yields a contradiction. 
 
\underline{\noindent\textbf{Case 4:} $\fh=B_3, V=\mathrm{spin}_7$.} This is the $8$-dimensional spin representation of $\fh$, with data 
\[
\dim V = 8,\qquad r(V) = 5,\qquad \delta(V) = \frac{8}{21}.
\]
Since $r(V) > N \geq 4$, we must have $N = 4$. Equation (\ref{EEE}) becomes
 \[
\frac{\delta(U)}{r(U)-8} + \frac{21/8}{5-4}  = \frac{1}{2}.
\]
This implies 
\begin{equation}
 \frac{\delta(U)}{8-r(U)} = \frac{17}{8}.
\label{eqB31} 
\end{equation}
But since $\frac{\delta(U)}{8-r(U)} < \frac{1}{7}$, 
then this  yields a contradiction.  
Therefore $l_U > 1$ is impossible. 
 
If $l_U=1$, which means $U$ to be the adjoint representation of $\fg$, then \[
\Est=\frac{1}{1-\dim V}+\frac{C_V}{1-l_V \dim \fg}. \]
If $V$ is not trivial representation, then the condition $\Est=\frac{1}{2}$ forces $\frac{C_V}{1-l_V \dim \fg}>0$, and hence $0<l_V<1$. 
This implies that $V$ is in Table \ref{Index}. Next we prove that this occurs only in the case $(\fg,U; \fh, V)=(A_1,\operatorname{adj}; A_1,\bc^2)$ or $(A_1,\operatorname{adj}; C_n,\bc^{2n})$ by exhausting all $V$ in Table \ref{Index} case by case. 
We illustrate several examples to prove this. 

If $\fh=A_{n-1}$ and $V=\bc^n$, then $l_V=\frac{1}{2n}$, $C_V=\frac{n^2-1}{2n^2}$, and hence 
\[
\Est=\frac{1}{1-n}+\frac{n^2-1}{2n^2(1-\frac{1}{2n}\cdot \dim \fg)}=\frac{1}{2}.
\]
This implies $\dim \fg=4-\frac{2}{n}$. 
Since $\dim \fg\in \bn$, then this forces $n=2$. This implies the case $(\fg,U; \fh, V)=(A_1,\operatorname{adj}; A_1, \bc^2)$. 

Assume $\fh=C_{n}$ and $V=\bc^{2n}$. Then $l_V=\frac{1}{2(n+1)}$ and $C_V=\frac{2n+1}{4(n+1)}$. 
This implies \[
\Est=\frac{1}{1-2n}+\frac{2n+1}{4(n+1)(1-\frac{1}{2(n+1)}\dim \fg)}=\frac{1}{2}.
\]
After solving this equation, we get $\dim \fg=3$. This implies that $(\fg,U; \fh, V)=(A_1,\operatorname{adj}; C_n,\bc^{2n})$. 

Next we let $\fh=\fs\fo(p,\bc)$ and $V=\bc^p$. By Table \ref{Index}, we have $l_V=\frac{1}{p-2}$ and $C_V=\frac{p-1}{2(p-2)}$. 
Hence 
\[
\Est=\frac{1}{1-p}+\frac{\frac{p-1}{2(p-2)}}{1-\frac{1}{p-2}\dim \fg}=\frac{1}{2}.
\]
This implies $\dim \fg=\frac{p-3}{p+1}<1$. So for this case, we can not get $\Est=\frac{1}{2}$. 

Similarly, for all remaining cases of $V$ in Table \ref{Index}, computation shows that $\Est=\frac{1}{2}$ cannot hold.  Then  this proposition is proved.
\end{proof}
 
 
\bigskip

\noindent{Proof of Theorem \ref{Es2}: 
Proposition \ref{prop1} shows that if $(\fg,U; \fh, V)$ is $(A_1,\operatorname{adj}; A_1,\bc^2)$ or $(A_1,\operatorname{adj}; C_n,\bc^{2n})$, then $\Est = \frac{1}{2}$. Besides this, $(\fg, U)$ and $(\fh, V)$ should be both in Table \ref{Index}.  
After exhausting each pairs in Table \ref{Index}, we can compute that (\ref{Ehalf}) consists of all the solutions for this case. 
We also regard $(A_1,\operatorname{adj}; C_n,\bc^{2n})$ as the special case of $(B_m, \bc^{2m+1}; C_n, \bc^{2n})$ with $m=1$, and 
$ (A_1,\operatorname{adj}; A_1,\bc^2)$ as $(B_1, \bc^3; C_1, \bc^2)$. 
Then this proves that (\ref{Ehalf}) exhausts all the pairs with $\Est=\frac12$.  
\hfill\qedsymbol 

\bigskip 

\begin{remark}
In (\ref{EE}), if we assume $\fh=0$ and $V$ to be trivial, this is the degenerate form of $\Est$. 
In this case $\Est(\fg, U)=\frac{C_U}{1-l_U}$.  That $\Est>0$ forces $0\leq l_U<1$. 
By checking the representations in Table \ref{Index}, we find that only for $\fg=C_n$, there is 
$C_U=\frac{2n+1}{4(n+1)}$, $l_U=\frac{1}{2n+2}$ and $\Est=\frac{\frac{2n+1}{4(n+1)}}{1-\frac{1}{2n+2}}=\frac12$. 
This corresponds to the Lie superalgebra $B(0,n)$.
\label{Remd}
\end{remark}

\subsection{Superadmissibility index of type $I$}

Let $\fg$ be a complex simple Lie algebra with an irreducible representation $U$. 
Let $\fh$ be one with an irreducible representation $V$. 
By (\ref{SCas2}), it is reasonable to define    
\begin{equation}
\Eso(\fg, U; \fh, V) = \frac{C_U}{1 - 2l_U \dim V} + \frac{C_V}{1 - 2l_V \dim U}-\frac{1}{2\dim U \dim V}.
\label{EE}
\end{equation}
We call this \emph{superadmissibility index of type $I$} for $(\fg, U)$ and $(\fh, V)$.
In this part, we solve $\Eso(\fg, U; \fh, V) =\frac{1}{2}$.

\begin{theorem}
The solutions of $\Eso(\fg, U; \fh, V)=\frac{1}{2}$ are 
\[
(\fg, U; \fh, V)=(A_{m-1}, \bc^m; A_{n-1}, \bc^n), ~(m\neq n). 
\]
Similarly, if we assume $\fh=0$ and $V$-trivial, that is, we regard $\Eso(\fg, U)=\frac{C_U}{1-2l_U}-\frac{1}{2\dim U}$ as the degenerate form of $E_s^1$. Then its solutions are 
\[
(\fg,U)=(A_{m-1}, \bc^m) \mbox{ or }(C_{n}, \bc^{2n}).
\]
\label{Es1}
\end{theorem}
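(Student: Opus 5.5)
We follow the pattern of the proof of Theorem \ref{Es2}: rewrite $\Eso$ in terms of $\delta(U)=\dim\fg/\dim U$ and $r(U)=1/l_U$, bound the possible Dynkin indices, reduce to Table \ref{Index}, and finish by a finite check. Using $C_U=l_U\dim\fg/\dim U$, we get
\[
\frac{C_U}{1-2l_U\dim V}=\frac{\delta(U)}{r(U)-2\dim V},
\]
so for $U,V$ both non-trivial the equation $\Eso=\frac12$ becomes
\[
\frac{\delta(U)}{r(U)-2\dim V}+\frac{\delta(V)}{r(V)-2\dim U}=\frac12+\frac{1}{2\dim U\dim V}.
\]

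The first step is an analogue of Proposition \ref{prop1}: we show $l_U<1$ and $l_V<1$. Suppose $l_U\ge 1$, so $r(U)\le 1$. The first term is then negative, since $2\dim V\ge 4>r(U)$. Hence the second term must exceed $\frac12$, which forces $r(V)>2\dim U$. By Lemma \ref{lem}, $l_U>1$ gives $\dim U>\dim\fg\ge 3$, and the adjoint case gives $\dim U=\dim\fg\ge3$. So $r(V)>6$ in all cases, and $V$ lies in Table \ref{Index} with small $l_V$. We then treat the families $\bc^m$ of $A$, $\bc^{2m}$ of $C$, $\bc^p$ of $\fs\fo$, and the few spin/exceptional entries, exactly as in Cases 1--4 of Proposition \ref{prop1}. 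In each case we isolate the $U$-term, use $\delta(U)\le 1$ and $r(U)\le1$ to bound its absolute value by $1/(2\dim V-1)$, and show that the required value is larger, which is a contradiction. The adjoint case $l_U=1$ is handled separately by solving the resulting equation for $\dim\fg$ and checking integrality, as in the proof of Proposition \ref{prop1}. I expect this elimination to be the main obstacle, because the extra term $-\frac{1}{2\dim U\dim V}$ and the factor $2$ shift the inequalities. Some borderline small cases may have to be checked by direct substitution rather than by the crude bounds.

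Once $0<l_U,l_V<1$ is established, both representations appear in Table \ref{Index}, and we finish by running over pairs of rows. For $(\bc^m,\bc^n)$ of $A_{m-1},A_{n-1}$, the first row of Table \ref{SCas} already gives the value $\frac12$ whenever $m\ne n$; for $m=n$ the denominators vanish, so this case is excluded. For each other family we substitute the values into the left side and obtain a rational function of one or two integer parameters. Clearing denominators gives a polynomial identity, and we show it has no admissible solutions. Typically the sign of a leading term rules out large parameters, and the finitely many remaining small values are checked directly. The finitely many exceptional rows ($\Lambda^3$, spin, $E_6$, $E_7$, $F_4$, $G_2$) are checked individually against all rows. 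This step is routine but long, and we would present it only for representative cases.

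For the degenerate form $\Eso(\fg,U)=\frac{C_U}{1-2l_U}-\frac{1}{2\dim U}$, positivity of $\frac12+\frac1{2\dim U}$ forces $l_U<\frac12$, so again $U$ is in Table \ref{Index}. For $\bc^m$ of $A_{m-1}$ we compute
\[
\frac{(m^2-1)/2m^2}{(m-1)/m}-\frac1{2m}=\frac{m+1}{2m}-\frac1{2m}=\frac12,
\]
and for $\bc^{2n}$ of $C_n$ the row $C(n)$ of Table \ref{SCas} gives $\frac12$. For $\bc^p$ of $\fs\fo_p$ we get
\[
\frac{p-1}{2(p-4)}-\frac1{2p}=\frac12,
\]
which reduces to $p^2-p=p^2-4p+p-4$, which is impossible. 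The remaining rows, those with $l_U<\frac12$, are finitely many and we check each numerically.
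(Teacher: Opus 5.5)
Your proposal follows the paper's proof essentially step for step. When $l_U\ge 1$, the negative $U$-term forces $r(V)>2\dim U\ge 6$, so $V$ is a standard representation, and the bounds $\delta(U)\le 1$, $r(U)\le 1$ give a contradiction as in Proposition~\ref{prop1}; both the two-factor equation and the degenerate equation ($l_U<\frac12$) are then settled by running through Table~\ref{Index}, exactly as the paper does.

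One small correction: in the degenerate case the rows with $l_U<\frac12$ other than the standard families are not finitely many, since $\Lambda^2\fs\fl_n$ has $l_U=\frac{n-2}{2n}$ for every $n$. This family is dispatched at once, because $\frac{(n-2)(n+1)}{2n}-\frac{1}{n(n-1)}>\frac12$ for $n\ge 4$.
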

\begin{proof}
First, we verify that when $m\neq n$, $\Eso(A_{m-1}, \bc^m; A_{n-1}, \bc^n)=\frac{1}{2}$. 
By Table \ref{Index}, we have 
\[
l_U=\frac{1}{2m}, C_U=\frac{m^2-1}{2m^2}; l_V=\frac{1}{2n}, C_V=\frac{n^2-1}{2n^2}.
\]
Then 
\[
\Eso(\fg, U; \fh, V)=\frac{\frac{m^2-1}{2m^2}}{1-2\cdot \frac{1}{2m}\cdot n}+\frac{\frac{n^2-1}{2n^2}}{1-2\cdot \frac{1}{2n}\cdot m}-\frac{1}{2mn}=\frac{1}{2}. 
\]
Specially, $\Eso(A_{m-1}, \bc^m)=\frac{\frac{m^2-1}{2m^2}}{1-2\cdot \frac{1}{2m}}-\frac{1}{2m}=\frac{1}{2}$. 
Also, for $(\fg, U)=(C_{n}, \bc^{2n})$, we have 
\[
l_U=\frac{1}{2(n+1)}, ~~C_U=\frac{2n+1}{4(n+1)}. 
\] 
Hence $\Eso(C_{n}, \bc^n)=\frac{\frac{2n+1}{4(n+1)}}{1-2\cdot \frac{1}{2(n+1)}}-\frac{1}{4n}=\frac{1}{2}. $

Next we prove that these exhaust all the solutions. 
The proof is similar to the proof of Theorem \ref{Es2}.  

First, we claim that when $\Eso(\fg, U; \fh, V)=\frac{1}{2}$, the Dynkin index should satisfy that $l_V< \frac{1}{2}$ and $l_U< 1$ or vice versa. 
By the definition of $\Eso$, we know that at least one term in $\Eso$ should be positive. 
We might assume $ \frac{C_V}{1 - 2l_V \dim U}>0$, which implies $2l_V<1$, and hence $l_V<\frac{1}{2}$. 
Suppose $l_U>1$, then similar to the proof of Proposition \ref{prop1}, we get that $\frac{1}{l_V}>2\dim U$. 
Then $l_V<\frac{1}{6}$, that is, $V$ can only be the standard representation of the classical Lie algebras. 
We illustrate an example to prove that $l_U\geq 1$ can not hold. 
Assume $\fh=A_{m-1}$ and $V=\bc^m$, then $l_V=\frac{1}{2m}$ and $C_V=\frac{m^2-1}{2m^2}$. 
Then \[
\Eso=\frac{\frac{m^2-1}{2m^2}}{1-2\cdot \frac{1}{2m}\cdot \dim U}+\frac{C_U}{1-2l_U\cdot m}-\frac{1}{2m \dim U}=\frac{1}{2}.
\]  
This implies \begin{eqnarray*}
\frac{1}{2m-1}&\geq&\frac{\dim\fg}{\dim U\cdot (2m-\frac{1}{l_U})}=\frac{m^2-1}{2m(m-\dim U)}-\frac{1}{2m\dim U}-\frac{1}{2} \\
&>&\frac{m^2-1}{2m(m-\dim U)}-\frac{1}{2}>\frac{\dim U}{2(m-\dim U)}.
\end{eqnarray*}
This implies a contradiction and hence $l_U<1$. 
Similarly, for other classical simple Lie algebra $\fh$ and standard representation $V$, we can verify $l_U<1$. 
Then $\Eso(\fg, U; \fh, V)=\frac{1}{2}$ implies $l_V< \frac{1}{2}$ and $l_U< 1$ or vice versa as claimed. 
 
Finally, similar to the method in the proof of Theorem \ref{Es2}, we verify all the cases for $l_U< \frac{1}{2}$ and $l_V< 1$ in Table \ref{Index}, 
Therefore, we get that  $(\fg, U; \fh, V)=(A_{m-1}, \bc^m; A_{n-1}, \bc^n), ~(m\neq n)$ exhausts all the solutions for $\Eso=\frac{1}{2}$. 

When $\fh$ is redundant,  that $\Eso(\fg, U)=\frac{C_U}{1-2l_U}-\frac{1}{2\dim U}=\frac{1}{2}$ forces $l_U<\frac{1}{2}$. 
And similarly, we can prove that  $(\fg,U)=(A_{m-1}, \bc^m)$ or  $(C_{n}, \bc^{2n})$ consists all the solutions for $\Eso(\fg, U)=\frac{1}{2}$. 
\end{proof}

\bigskip

\noindent{Proof of Theorem \ref{main4}}:  
Theorem \ref{main4}(1)  is an immediate consequence of Theorem \ref{Es2} and Remark \ref{Remd}, whereas Theorem \ref{main4}(2) follows directly from  Theorem \ref{Es1}.
\hfill\qedsymbol

%
%

%

\end{document}